\documentclass[a4paper]{article}

\input{packages.sty}
\input{macros.sty}

\newcommand{\keywords}[1]{} %

\begin{document}
\title{The points of canonical extensions of doctrines}
\author[1]{Sam van Gool}
\author[2]{Joshua L. Wrigley}
\affil[1]{\small LMF, ENS Paris-Saclay}
\affil[2]{\small Department of Mathematics and Statistics, Faculty of Sciences, Masaryk University}
\maketitle

\abstract{We analyse the space of points of the canonical extension of a coherent doctrine. We first give a full characterisation of doctrine morphisms that are extensible, and relate it to the existing notion of p-model of a coherent category. Through this characterisation, the extensible morphisms are shown to be exactly those which are $\omega$-saturated in the sense of coherent first-order logic. 
Next, we answer the question: when does a presheaf of models fully describe the canonical extension? We prove a characterisation theorem via two conditions, which are again natural from the perspective of coherent logic, namely, homogeneity and the realisation of all prime types in a strict sense. The characterisation theorem allows us to deduce a reconstruction result for any coherent theory with the property that all prime types can be realised in a countable, saturated model. For instance, $\omega$-stable coherent theories always have this property. We conclude by explaining how our results can be interpreted topos-theoretically, by relating them to the classifying topos and to the topos of types.
\keywords{Canonical extension $\cdot$ coherent logic $\cdot$ coherent doctrine $\cdot$ saturated model $\cdot$ categorical logic $\cdot$ type space $\cdot$ classifying topos $\cdot$ topos of types}}

\tableofcontents

\section{Introduction}\label{sec:intro}
The duality theory of Stone~\cite{Sto1938} shows that a distributive lattice may be equivalently studied through its space of \emph{points}, which can be viewed as its prime filters, its prime ideals, or its morphisms to the two-element lattice. 
A logical interpretation of Stone duality views a lattice as an algebra of equivalence classes of formulae modulo a propositional theory, and the points of its dual space as model-theoretic types of the theory. 
In the absence of classical negation, types may have non-trivial order relations between them. 
It is a key insight of Priestley's seminal 1970 paper~\cite{Pri1970} that duality theory for distributive lattices is greatly simplified and enhanced if one considers this partial order on points as a primitive, rather than a derived, notion in the space. Under Priestley duality, lattice morphisms  correspond to continuous order-preserving maps between their dual spaces in the opposite direction.

Gehrke and Jónsson~\cite{GehJon1994}, building on work of Jónsson and Tarski for Boolean algebras~\cite{JonTar1951}, introduce an algebraic counterpart to Priestley's dual space: the \emph{canonical extension} $L^\delta$ of a distributive lattice $L$. 
While, importantly, this extension can be constructed in a point-free manner without appeal to any choice principles (cf.~Subsection~\ref{subsec:can-ext-prelims}), a useful spatial point of view on the elements of $L^\delta$ is that they are upwards closed subsets of points of $L$, with respect to Priestley's partial order. 
In other words, the completion $L \into L^\delta$ is an algebraic rendering of the idea of equipping the topological space of points of $L$ with a partial order.

In this way, Priestley duality can,  {\it a posteriori}, be \emph{derived} from the canonical extension construction: 
Given the canonical extension $ L \into L^\delta$ of a distributive lattice $L$,
the set $\Pt(L)$ of points of $L$ is in a natural bijection with the set of \emph{complete} morphisms from $L^\delta$ to the two-element lattice $2$, or, equivalently, with the set $J^\infty L^\delta$ of \emph{completely join-prime} elements of $L^\delta$. 
The Priestley order on $\Pt(L)$ can then be obtained from the restriction of the order of $L^\delta$ to $J^\infty L^\delta$, and the Priestley topology, viewed on $J^\infty L^\delta$, is generated by the sets ${\downarrow}a \cap J^\infty L^\delta$ and their complements, as $a$ ranges over the elements of $L$.

Syntactically speaking, the canonical extension $L^\delta$ may be thought of as an enrichment of the propositional theory that $L$ represents, by adding infinitary conjunction and disjunction, while \emph{only taking into account the finitary part of $L$}. 
This means, for instance, that even if a sequence of elements $(\phi_n)_{n \in \N}$ of $L$ has an infimum $\phi$, say, in $L$, then it will have a new infimum $\phi'$ in $L^\delta$, where $\phi' \not\in L$. This new element $\phi'$ will be the unique successor of $\phi$ in $L^\delta$. In the definition of canonical extensions, this is formalised as a \emph{compactness} property of the completion.
This strict separation of the finitary and the infinitary distinguishes the canonical extension from, for instance, the Dedekind-McNeille completion, and is crucial for the ensuing theory.

In her PhD work~\cite{coumans_phd,coumans_apal} from 2013, Coumans extends the canonical extension construction to \emph{coherent doctrines}, the algebraic structures encoding first-order theories, still in a negation-free setting. Specifically, given a first-order theory $\theory$, one may associate with it a collection $D_\theory$ of distributive lattices, indexed over finite sets of variables called \emph{contexts}, as follows. For each context $c$,  write $D_\theory c$ for the lattice of $\theory$-equivalence classes of formulae of which all free variables are in $c$. Substitutions between contexts then induce lattice morphisms between the fibers. 
This leads to Lawvere's formulation~\cite{lawvere} of \emph{doctrines} as {functors} from a category of contexts to the category of distributive lattices, where the adjective \emph{coherent} signifies that these functors are subject to axioms encoding the rules of (negation-free) first-order logic (called coherent logic by topos theorists; see \cite[\S D1]{elephant}); we recall the precise definitions in Subsection~\ref{subsec:coh-doc-prelims} and discuss syntactic doctrines further in Section~\ref{sec:points}.

The \emph{canonical extension} $D^\delta$ of a coherent doctrine $D$ is  obtained by post-composing $D$ with the functor that assigns to a lattice $L$ its canonical extension $L^\delta$. The first key result of Coumans, recalled in Subsection~\ref{subsec:can-ext-doc}, is that the resulting functor is again a coherent doctrine, i.e., $D^\delta$ still satisfies the axioms encoding first-order logic.
A further deep insight of Coumans's work~\cite{coumans_apal} is that the canonical extension construction, at the first-order level, is closely related to Makkai's \emph{topos of types} construction~\cite{makkai_topos_of_types}, which, as its name suggests, intends to capture the `theory of all model-theoretic types' of a given first-order theory.
This leads us to the question motivating this paper:
\begin{itemize}
	\item[] \emph{How do the type spaces of a first-order theory relate to the canonical extension of its doctrine?}
\end{itemize}

A first answer to this question is essentially immediate from the definitions, generalising the Gehrke-Jónsson viewpoint on Priestley duality outlined above: the types of the theory are still the complete points, in the appropriate sense, of the canonical extension of its doctrine, and it is possible to assemble these into Priestley spaces of types called \emph{polyadic Priestley spaces}, see for instance~\cite{GooMar2024}. However, as already noticed by both Makkai~\cite{makkai_topos_of_types} and Coumans~\cite{coumans_apal}, the appropriate definition of `complete point' for doctrines is more subtle than it is for lattices, and our first contribution here clarifies this subtlety, as we will briefly describe now.

A \emph{point} of a doctrine $D$ is usually conceptualized as a morphism from $D$ to the semantic \emph{doctrine of predicates}, $\mc{P}$, i.e., the contravariant powerset functor. The key problem is that, contrary to the lattice setting, a doctrine $D$ may have points which cannot be extended to complete points of $D^\delta$; we give a concrete example in Example~\ref{exa:not-universal} below.  In Section~\ref{sec:extensible}, we then study the ensuing extensibility problem at the level of general morphisms between doctrines, and we arrive at a characterisation of extensible morphisms in Theorem~\ref{thm:extensible}. Our condition characterising extensible morphisms is similar to one previously identified in Makkai's work~\cite{makkai_topos_of_types} under the name `$p$-model', and this notion also plays a central role in Coumans's work; for more on the comparison, see Remarks~\ref{rem:p-model1}~and~\ref{rem:p-model2}, and Section~\ref{sec:topos} below. Our new contribution in Section~\ref{sec:extensible} is to derive this concept from first principles.

In Section~\ref{sec:points}, we instantiate our general concept of extensibility to points of a doctrine of the form $D_\theory$, for $\theory$ a coherent theory. This allows us to show, in Theorem~\ref{thm:saturated}, that the extensible points of a syntactic doctrine precisely correspond to $\omega$-\emph{saturated} models, a correspondence that had only been implicit in the previous literature (cf.\ Remark~\ref{rem:p-model2}). Here, as everywhere in the paper, the concept of $\omega$-saturation is meant in a negation-free sense that takes into account the partial order between prime types. We discuss the relationship with classical $\omega$-saturation in Remark~\ref{rem:p-model2}.

In Section~\ref{sec:presheaf_pts}, we move from studying individual points of a doctrine to \emph{families} of points. Such families will be indexed by a small category $\mc{K}$, and can be thought of either as presheaves on $\Kcat$ valued in (usual) points of $D$ or as points of $D$ valued in a presheaf category $\Set^\Kcat$; this correspondence is made explicit in Subsection~\ref{subsec:twoways}. The presheaf category $\Set^\Kcat$ induces a semantic doctrine of \emph{subobjects}, generalising the doctrine of predicates $\mc{P}$ mentioned above. 

In Proposition~\ref{prop:sub-psh-dlplus}, we identify the completely join-prime elements of the subobject doctrine of $\Set^\Kcat$ as the \emph{orbits} in a presheaf. Using this characterisation of the completely join-prime elements, we obtain the main result of Section~\ref{sec:presheaf_pts}, Theorem~\ref{thm:iso_iff_types_and_homo}, in which we give two model-theoretic conditions characterising exactly when a presheaf-valued point $M$ `captures' the canonical extension of a doctrine, in the sense that the family induces an isomorphism between $D^\delta$ and the doctrine of subobjects of $\Set^\Kcat$, along a restricted domain. Thus, we recover another important result of Coumans \cite[Theorem 40]{coumans_apal}, as we explain in Section~\ref{sec:topos}. The two model-theoretic conditions on $M$, which we call \emph{homogeneity} and \emph{strictly realising all (prime) types}, characterise the surjectivity and injectivity of the morphism, respectively. This result demonstrates how the canonical extension of doctrines allows for a lattice-theoretic approach to the model theory of first-order logic, in a way that is analogous to the use of canonical extensions of lattices for the model theory of propositional logics, see, e.g.,~\cite{GehPri2007}.

In Section~\ref{sec:reconstr}, we use Theorem~\ref{thm:iso_iff_types_and_homo} to derive an entirely new result: we present a `reconstruction theorem' for the canonical extension of a coherent doctrine of the form $D_\theory$, Theorem~\ref{thm:if_enough_omegasat_then_reconstr}, which applies whenever every prime type of $\theory$ can be realised in a countable, saturated model.  This theorem in particular applies to $\omega$-stable coherent theories, as explained in Corollary~\ref{cor:omega-stable-reconstruction}.  Given some algebraic structure $A$ associated with a theory $\theory$, a `reconstruction problem' asks what further data is needed on the points of $\theory$ in order to be able to reconstruct $A$.  For instance, Priestley duality can be understood as a reconstruction theorem for coherent propositional logic: the Lindenbaum-Tarski algebra of formulae for $\theory$ can be obtained from the points of $\theory$, equipped with a topology.  Our reconstruction results, Theorem~\ref{thm:if_enough_omegasat_then_reconstr} and Corollary~\ref{cor:omega-stable-reconstruction}, assert that, in the particular setting described above, the canonical extension of the syntactic doctrine of $\theory$ can be reconstructed from a certain subcategory of the models of $\theory$ endowed with `knowledge of the underlying sets of the models'.

In the concluding Section~\ref{sec:topos}, we relate our results to the topos-theoretic literature.

\section{Canonical extensions of coherent doctrines}\label{sec:prelims}
In this section, we present a theory of canonical extension for coherent doctrines. This theory was first developed in~\cite{coumans_apal,coumans_phd}, combining doctrine theory~\cite{lawvere,makkai-reyes} and canonical extension theory~\cite{JonTar1951,JonTar1952,GehJon1994,GehJon2004}. We will here recall the definitions necessary for understanding this theory, and prove some preliminary results about them.

\subsection{Coherent doctrines}\label{subsec:coh-doc-prelims}
Distributive lattices\footnote{Throughout this paper, `lattice' means `bounded lattice', i.e., a partially ordered set admitting finite suprema and finite infima, and lattice morphisms are required to preserve finite suprema and finite infima, unless explicitly mentioned otherwise.} provide the algebraic semantics of \emph{coherent propositional logic}.  For the algebraic semantics of \emph{coherent first-order logic}, we replace distributive lattices by \emph{coherent doctrines}~\cite{lawvere}.  The concept of doctrine categorifies the operation which, to any context of variables, associates the distributive lattice of equivalences classes of first-order formulae whose free variables are in the context.  First-order connectives, such as existential quantification and equality, are provided by structure between these lattices, namely the existence of left adjoints.
	\begin{definition}\label{dfn:coh-doc}
		A \emph{coherent doctrine}\footnote{These are also known as `coherent hyperdoctrines' in the literature, but we drop the prefix `hyper' for brevity's sake.} is a functor $P \colon \cat\op \to \DL$ where $\cat$, the \emph{base category}, has all finite limits, and such that, for each arrow $f \colon c \to c'$ in $\cat$, the lattice morphism $Pf \colon Pc' \to Pc$ has a left adjoint, denoted $\exists^P_f \colon Pc \to Pc'$, satisfying:
		\begin{enumerate}
			\item (Frobenius reciprocity) for all $\phi \in Pc$ and $\psi \in Pc'$, 
            \[ \exists^P_f( \phi \land (Pf)(\psi) ) = \exists^P_f(\phi) \land \psi , \text{ and }\] 
			\item (Beck-Chevalley condition) for any pullback square in $\cat$ as given on the left, the square in $\DL$ on the right commutes:
			\[
			\begin{tikzcd}
				c_1 \times_c c_2 \ar{r}{\pi_2} \ar{d}[']{\pi_1} & c_2 \ar{d}{f_2} \\
				c_1 \ar{r}[']{f_1} & c
			\end{tikzcd}
			\qquad
			\begin{tikzcd}
				Pc_2 \ar{r}{P\pi_2} \ar{d}[']{\exists^P_{f_2}} & P c_1 \times_c c_2 \ar{d}{\exists^P_{\pi_1}}  \\
				Pc \ar{r}[']{Pf_1 } & Pc_1 .
			\end{tikzcd}
			\]
		\end{enumerate}
        Let $P \colon \mC\op \to \DL$ and $Q \colon \mD\op \to \DL$ be coherent doctrines. A \emph{morphism} from $P$ to $Q$ is a pair $(M,\alpha)$, where $M \colon \mC \to \mD$ is a finite-limit-preserving functor, and $\alpha \colon P \To Q \circ M\op$ is a natural transformation such that, for any arrow $f \colon c \to c'$ in $\mC$, the following \emph{Beck-Chevalley square} commutes:
        \begin{equation}\label{eq:doc-morphism}
        \begin{tikzcd}
            Pc \ar{r}{\exists^P_f} \ar{d}[']{\alpha_c} & Pc' \ar{d}{\alpha_{c'}} \\
            QMc \ar{r}[']{\exists^Q_{Mf}} & QMc'.
        \end{tikzcd}
        \end{equation}
        Given morphisms $(M,\alpha),(N,\beta) \colon P \to Q$, a \emph{transformation} from $(M,\alpha)$ to $(N,\beta)$ is a natural transformation $\sigma \colon M \Rightarrow N$ such that $\alpha_c(\phi) \leq Q ( \sigma_c) \beta_c (\phi)$ for all $c \in \cat$ and $\phi \in Pc$.  We denote the 2-category of coherent doctrines by $\CohDoc$.
    \end{definition}
    \begin{notation}
        When it can be inferred from the context what is the coherent doctrine $P \colon \cat\op \to \DL$ under consideration, and $f$ is a morphism of $\cat$, we write $f^*$ for the morphism $Pf$ and $\exists_f$ for its left adjoint $\exists^P_f$.
    \end{notation}
    \begin{remark}\label{rem:mono-BC}
        If $P \colon \mC\op \to \DL$ is a coherent doctrine, and $f \colon c \to c'$ is a monomorphism in $\mC$, then the square
        \[
        \begin{tikzcd}
            c \ar{r}{\id_c} \ar{d}[']{\id_c} & c \ar{d}{f}\\
            c \ar{r}[']{f} & c'
        \end{tikzcd}
        \]
        is a pullback in $\mC$. Thus, by the Beck-Chevalley condition, $Pf \circ \exists^P_f = \exists^P_{\id_c} \circ P\id_c = \id_{Pc}$. %
    \end{remark}
	\begin{example}\label{exa:subdoc}
		Let $\cat$ be a category with finite limits which is \emph{well-powered}, i.e., for each object $c$ of $\cat$, the collection $\Sub_\cat(c)$ of subobjects of $c$ is a set. We denote by $\Sub_\cat \colon \cat\op \to \Poset$ the functor that sends each object to its meet-semilattice of subobjects, and acts by pullback on morphisms. The functor $\Sub_\cat$ is a coherent doctrine precisely when $\cat$ is a \emph{coherent category} (see, e.g., \cite[\S A.1]{elephant}). When $\cat = \Set$, the coherent doctrine $\Sub_\Set$ is (isomorphic to) the power set doctrine $\mc{P}$, see Definition~\ref{dfn:power-doc} below. In Section~\ref{sec:presheaf_pts}, we will consider $\Sub_\cat$ for $\cat$ a category of presheaves.
	\end{example}
	\begin{remark}\label{rem:nat-iso-is-doc-morph}
		Let $P \colon \mc{C}\op \to \DL$ and $Q \colon \mc{D}\op \to \DL$ be coherent doctrines.  If $M \colon \mc{C} \to \mc{D}$  is a finite-limit-preserving functor and $\alpha \colon P \Rightarrow Q \circ M\op$ is a natural \emph{isomorphism}, then $(M,\alpha)$ is always a morphism of coherent doctrines $(M,\alpha) \colon P \to Q$.  Indeed, we only need to check the commutativity of \cref{eq:doc-morphism}.  The inverse $\alpha^{-1}_c$ defines a left adjoint to the component $\alpha_c$, and so by taking left adjoints of the commuting naturality square
		\[
		\begin{tikzcd}
			Pc' \ar{r}{Pf}  \ar{d}[']{\alpha_{c'}} & Pc \ar{d}{\alpha_c} \\
			QMc' \ar{r}[']{QMf} & QMc,
		\end{tikzcd}
		\]
		we obtain the equation $\exists^P_f \circ \alpha^{-1}_c = \alpha_{c'}^{-1} \circ \exists^Q_{Mf}$, from which we derive $\alpha_{c'} \circ \exists^P_f = \exists^Q_{Mf} \circ \alpha_c$ as desired.
	\end{remark}
\subsection{Canonical extensions}\label{subsec:can-ext-prelims}
The canonical extension of a lattice~\cite{JT, GehJon1994, GehHar2001, GehJon2004, FusGeh2025} gives an algebraic, point-free presentation of Stone duality and its extensions, without any choice assumptions. We briefly recall here the definition and essential facts about the canonical extension.

Let $L$ be a lattice. A \emph{completion} of $L$ is an injective lattice morphism $\eta \colon L \hookrightarrow C$ where $C$ is a complete lattice. We will always identify $L$ with its isomorphic image $\eta[L]$ in $C$, so that we may consider any completion as a sublattice inclusion $\eta[L] \into C$. In this setting, an element $x \in C$ is called a \emph{filter element} when $x = \bigwedge \{a \in L  \ | \ x \leq a\}$, and $y \in C$ is called an \emph{ideal element} when $y = \bigvee \{b \in L \ | \ b \leq y\}$. The sets of filter and ideal elements of $C$ are denoted $\mc{F}(C)$ and $\mc{I}(C)$, respectively.
	
\begin{definition}
	A completion  $L  \into L^\delta$ is a \emph{canonical extension} of $L$ if it satisfies the following two properties:
	\begin{enumerate}
		\item (density) the filter elements join-generate and the ideal elements meet-generate, i.e., for every $u \in L^\delta$, 
		\[ \bigvee \{ x \in \mc{F}(L^\delta) \ | \ x \leq u \} = u = \bigwedge \{ x \in \mc{I}(L^\delta) \ | \ u \leq y\}, \]
		\item (compactness) for any subsets $S, T$ of $L$, if $\bigwedge S \leq \bigvee T$, then there exist finite subsets $S'$ of $S$ and $T'$ of $T$ such that $\bigwedge S' \leq \bigvee T'$.
	\end{enumerate}
\end{definition}

Every lattice $L$ has a canonical extension, which is unique up to an isomorphism that preserves $L$. This can be proved without appeal to any choice principle~\cite[Section~2]{GehHar2001}.
One such construction exhibits $L^\delta$ as the MacNeille completion of an \emph{intermediate poset}, built from the free directed join completion and the free directed meet completion of $L$; the original source for this abstract viewpoint on the construction is the paper~\cite{GehPri2008} by Gehrke and Priestley. In more concrete terms, as was done in~\cite{GehHar2001}, $L^\delta$ can be built as the complete lattice of stable elements of a Galois connection induced by a relation between the filter lattice of $L$ and the ideal lattice of $L$. 

Let $L$ be a distributive lattice. The canonical extension $L^\delta$ of $L$ is a complete distributive lattice which, assuming a choice principle, can moreover be shown to be isomorphic to the lattice of up-sets of a poset.\footnote{For proofs of the claims in this paragraph and the next, see, e.g.,~\cite[Corollary~3.43, Corollary~3.48 and Theorem~7.5]{FusGeh2025}; note that \cite{FusGeh2025} represents $L^\delta$ using down-sets, but we here use up-sets in order to fit with topos-theoretic tradition.} This (up to isomorphism unique) poset representing $L^\delta$ can be obtained by equipping the set $J^\infty L^\delta$ of \emph{completely join-prime} elements of $L^\delta$ with the opposite of the order of $L^\delta$, restricted to $J^\infty L^\delta$. Here, an element $x$ in a complete lattice $C$ is \emph{completely join-prime} if, for any $S \subseteq C$, $x \leq \bigvee S$ implies that $x \leq s$ for some $s \in S$. In particular, assuming a choice principle, the completely join-prime elements of $L^\delta$ join-generate $L^\delta$. For reasons to be explained further in Section~\ref{sec:points}, we will sometimes refer to completely join-prime elements of $L^\delta$ as \emph{complete points}. 

The poset of completely join-prime elements of $L^\delta$ is isomorphic to the poset of prime filters of $L$ under inclusion, via the function which sends $x \in J^\infty L^\delta$ to $\{a \in L \ | \ x \leq a\}$. 
In summary, we obtain a different construction of $L^\delta$ as the complete lattice of up-sets of the poset underlying the Priestley dual space of $L$.

Recall that a lattice $C$ is isomorphic to a lattice of up-sets of a poset if and only if $C$ is complete and join-generated by its completely join-prime elements (and then, in particular, $C$ is completely distributive); we will call such a lattice a $DL^+$. 
\begin{definition}
We write $\DLplus$ for the category whose objects are $DL^+$s and whose morphisms are \emph{complete morphisms}, i.e., functions that preserve arbitrary joins and arbitrary meets.
\end{definition} 
Note that any $DL^+$ is in particular a complete Heyting algebra, but the morphisms differ; also see Remark~\ref{rem:DL-plus-Heyting}.
		
We will now recall how the object assignment $L \mapsto L^\delta$ extends to a \emph{functor} from the category $\DL$, of distributive lattices with lattice morphisms, to $\DLplus$. First, for any monotone function $f \colon L \to M$, we define, for any $x \in \mc{F}(L)$ and $y \in \mc{I}(L)$,
\[ \bar{f}(x) \isdef \bigwedge \{ f(a) \ | \ a \in L, x \leq a \} \text{ and }
\bar{f}(y) \isdef \bigvee \{ f(b) \ | \ b \in L, b \leq y \}, \]
where we note that $\mc{F}(L) \cap \mc{I}(L) = L$, and the right-hand-sides of both formulae are equal to the value of $f$ when $x$ or $y$ belongs to $L$.
We then define, for any $u \in L^\delta$,
\[ f^\sigma(u) \isdef \bigvee \{ \bar{f}(x) \ | \ x \in \mc{F}(L), x \leq u \} \text{ and } f^\pi(u) \isdef \bigwedge \{ \bar{f}(y) \ | \ y \in \mc{I}(L), u \leq y\}. \]
The extensions $f^\sigma$ and $f^\pi$ are different in general, and neither operation respects composition; see, e.g., \cite[Sections~2.4 and 2.5]{GehJon2004}. However, when $f$ is either finite-join-preserving or finite-meet-preserving, then $f^\sigma = f^\pi$~\cite[Corollary~2.25]{GehJon2004}, and in this case we write $f^\delta$ for the extension. In particular, for any lattice morphism $f \colon L \to M$ between distributive lattices, we have a unique extension $f^\delta \colon L^\delta \to M^\delta$. The following result, Proposition~\ref{prop:can-ext-left-adjoint-functor}, is well-known in the theory of canonical extensions; see e.g. \cite[Theorem~3.2]{GehJon2004},~\cite[Section~1.1]{GehVos2011}, and~\cite[Theorem~8.21 and Corollary~8.22]{FusGeh2025}. Note that distributivity is crucial for the adjointness statement in Proposition~\ref{prop:can-ext-left-adjoint-functor}; also see~\cite{MorAlt2018}.
\begin{proposition}\label{prop:can-ext-left-adjoint-functor}
The assignments $L \mapsto L^\delta$ and $f \mapsto f^\delta$ define a functor $\DL \to \DLplus$, which is left adjoint to the forgetful functor from $\DLplus$ to $\DL$.
\end{proposition}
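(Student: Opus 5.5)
Two things must be shown: that the assignment is functorial, and that it is left adjoint to the forgetful functor $U\colon \DLplus \to \DL$. I would derive functoriality from a universal property, so the real work is the following statement. For any distributive lattice $L$, any $DL^+$ $C$, and any lattice morphism $h\colon L \to C$, there is a unique complete morphism $\hat h \colon L^\delta \to C$ with $\hat h|_L = h$, and $\hat h = h^\delta$.

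Granting this, the functor structure is automatic. For $f\colon L\to M$, the morphism $f^\delta$ is the unique complete extension of $L \to M \into M^\delta$. Uniqueness then gives $(g\circ f)^\delta = g^\delta\circ f^\delta$ and $\id^\delta=\id$, because both sides are complete morphisms agreeing on $L$. The unit $L\into UL^\delta$ is universal by the same statement, which gives the adjunction with the forgetful functor.

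For uniqueness, I would argue as follows. A complete morphism $k\colon L^\delta\to C$ extending $h$ is forced on filter elements: $k(x)=\bigwedge\{h(a) \mid a\in L,\ x\le a\}$. It is then forced on all of $L^\delta$ by density, $u=\bigvee\{x\in\mc F \mid x\le u\}$. The resulting formula is exactly $h^\sigma$, with $\bar h$ computed in $C$.

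For existence, set $\hat h := h^\sigma$; since $h$ is a lattice morphism, $h^\sigma = h^\pi$ by the cited result \cite[Corollary~2.25]{GehJon2004}. Three things must be checked.
\begin{enumerate}
\item $\hat h$ extends $h$. This is immediate.
\item $\hat h$ preserves arbitrary joins. Since $\hat h=h^\sigma$, it is enough to show that $h^\sigma$ preserves joins, and here I would use that $C$ is join-generated by completely join-prime elements. Let $p\in J^\infty C$ with $p\le h^\sigma(\bigvee S)$. Complete join-primeness gives a filter element $x\le\bigvee S$ with $p\le \bar h(x)$. Compactness should then give finitely many $s_i \in S$ and filter elements $x_i\le s_i$ with $x\le\bigvee x_i$; this needs $x$ to be a meet of elements of $L$ together with some care with ideal elements. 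From $p\le\bar h(x)\le\bigvee \bar h(x_i)$ and primeness, we get $p\le\bar h(x_i)\le h^\sigma(s_i)$.
\item $\hat h$ preserves arbitrary meets. This is the order-dual argument applied to $h^\pi$, using that $C$ is meet-generated by completely meet-prime elements. That holds because $C\cong$ up-sets of a poset, so complements of principal down-sets supply them.
\end{enumerate}

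The main obstacle is the compactness step in the join-preservation argument (item 2). The cleanest route I would try first is the standard lemma that $\bar h(x)$, for $x$ a filter element, commutes with down-directed meets and binary joins of filter elements. This is where distributivity of both $L$ and $C$ enters; without it, as noted in \cite{MorAlt2018}, the argument fails. Concretely, I would write $\bigvee S$ as the directed join of finite joins of filter elements below members of $S$. I would then show that $\bar h$ of a filter element below a directed join of filter elements lies below some stage, using compactness of $L\into L^\delta$ together with the fact that a filter element lies below a finite join of filter elements iff, for each $a \ge x$ in $L$, suitable finite subcovers exist. The meet case is dual, and uniqueness closes the argument.
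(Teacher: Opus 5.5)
The paper does not prove this proposition itself; it quotes it from \cite{GehJon2004,GehVos2011,FusGeh2025}, so your argument has to stand on its own. Two parts of it are fine: deriving functoriality and the adjunction from the universal property of $L \into L^\delta$ (taking for granted, as the paper does, that $L^\delta$ is a $DL^+$), and your uniqueness argument.

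The gap is in item~2, which carries all of existence. The covering statement you want from compactness is false. You claim that a filter element $x \le \bigvee S$ lies below a finite join of filter elements $x_i \le s_i \in S$. Take $L$ an infinite Boolean algebra, so $L^\delta \cong \mathcal{P}(X)$ with $X$ its set of ultrafilters. Let $S$ be the set of atoms $\{\xi\}$ and $x = \top$. Every finite join of filter elements lying below atoms is a finite set, so it cannot be $\top$.

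Your fallback fails for the same reason. Take $C = L^\delta$ and $h$ the inclusion, so $\bar h(x) = x$. The finite subsets of $X$ form a directed family of filter elements with join $\top$, yet $\bar h(\top) = \top$ lies below no stage.

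Compactness only controls a filter element below an \emph{ideal} element, and $\bigvee S$ is in general neither. Lemmas about $\bar h$ commuting with directed meets or binary joins of filter elements cannot help, because $x$ itself genuinely cannot be covered finitely. Item~3, being the order-dual, inherits the same problem.

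The missing idea is to localise at $p$ instead of trying to cover $x$.
\begin{enumerate}
\item Let $F_p := \{a \in L \mid p \le h(a)\}$. This is a prime filter, since $p$ is join-prime and $h$ is a lattice morphism. Let $x_p := \bigwedge F_p$ in $L^\delta$.
\item Compactness gives $\{a \in L \mid x_p \le a\} = F_p$. So $x_p$ is a filter element, and for filter elements $x$ we have $p \le \bar h(x)$ iff $x_p \le x$.
\item By complete join-primeness of $p$, this yields $p \le h^\sigma(u) \iff x_p \le u$ for every $u \in L^\delta$.
\end{enumerate}

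The key lemma is that $x_p$ is completely join-prime in $L^\delta$. Suppose $x_p \le \bigvee S$ but $x_p \not\le s$ for all $s \in S$.
\begin{enumerate}
\item Meet-density of ideal elements gives ideal elements $y_s \ge s$ with $x_p \not\le y_s$.
\item Hence $F_p$ is disjoint from each ideal $I_s := \{b \in L \mid b \le y_s\}$.
\item But $x_p \le \bigvee S \le \bigvee \bigcup_s I_s$. Compactness then gives $a \in F_p$ and $b_j \in I_{s_j}$ with $a \le b_1 \vee \dots \vee b_n$.
\item Primeness of $F_p$ puts some $b_j$ in $F_p \cap I_{s_j}$, a contradiction.
\end{enumerate}
This is exactly the `moreover' part of Lemma~\ref{lem:adjoint-type}.

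Since $C$ is join-generated by $J^\infty C$, the equivalence $p \le h^\sigma(u) \iff x_p \le u$ finishes the proof. Preservation of arbitrary meets is then immediate. Preservation of arbitrary joins follows from complete join-primeness of $x_p$ and of $p$. In particular, you need neither $h^\pi$ nor completely meet-prime elements of $C$.
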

Proposition~\ref{prop:can-ext-left-adjoint-functor} expresses the universal property that, for every morphism of distributive lattices $f \colon L \to C$, with $C$ a $DL^+$, there exists a unique complete morphism $f^\sharp \colon L^\delta \to C$ such that the following diagram commutes:
\begin{equation} \label{eq:sharp-def}
    \begin{tikzcd}
        & L^\delta \ar[dashed]{d}{f^\sharp} \\
        L \ar[hook]{ru} \ar{r}[']{f} & C.
    \end{tikzcd}
\end{equation}
We call the complete morphism $f^\sharp \colon L^\delta \to C$ the \emph{lifting} of $f$ to $L^\delta$.
\begin{remark}\label{rem:can-ext-finite}
    If $L$ is a \emph{finite} distributive lattice, then $L^\delta = L$, and $f^\sharp = f$ for all $f$.
\end{remark}

Since $f^\sharp$ is a complete morphism, it in particular has a left adjoint. We now prove a technical lemma about this left adjoint which will be important when we consider types in \cref{sec:presheaf_pts}; this lemma is a consequence of known facts in the canonical extensions literature, compare for instance~\cite[Lemma~7.9]{FusGeh2025}. For the reader's convenience, we state and prove it in the form we will use it below.
\begin{lemma}\label{lem:adjoint-type}
	Let $L$ be a distributive lattice, $C$ a $DL^+$, and $\alpha \colon L \to C$ a morphism of distributive lattices. Let $\beta \colon C \to L^\delta$ be the left adjoint of the lifting $\alpha^\sharp \colon L^\delta \to C$. Then, for any completely join-prime element $j \in J^\infty C$, we have 
	\[ \beta(j) = \bigwedge \{ a \in L \ | \ j \leq \alpha(a) \} \ , \]
	where the infimum is taken in $L^\delta$.  Moreover, $\beta(j) \in J^\infty(L^\delta)$.
\end{lemma}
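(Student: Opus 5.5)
We plan to compute $\beta(j)$ through the adjunction $\beta \dashv \alpha^\sharp$, so that $\beta(j)$ is the least $u \in L^\delta$ with $j \leq \alpha^\sharp(u)$. Put $x \isdef \bigwedge\{a \in L \mid j \leq \alpha(a)\}$, a filter element of $L^\delta$. We show $\beta(j) = x$ by proving two inequalities.

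For $\beta(j) \leq x$: if $a \in L$ satisfies $j \leq \alpha(a) = \alpha^\sharp(a)$, then adjointness gives $\beta(j) \leq a$. Taking the meet over all such $a$ yields $\beta(j) \leq x$.

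For $x \leq \beta(j)$ it suffices, by adjointness, to show $j \leq \alpha^\sharp(x)$; minimality of $\beta(j)$ then gives $\beta(j) \leq x$ again, and the two together give equality. Since $\alpha^\sharp$ preserves arbitrary meets, $\alpha^\sharp(x) = \bigwedge\{\alpha(a) \mid j \leq \alpha(a)\}$, which lies above $j$. So the infimum formula follows directly.

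Next we show $\beta(j) \in J^\infty(L^\delta)$. Suppose $x \leq \bigvee S$ with $S \subseteq L^\delta$. Because $\alpha^\sharp$ preserves joins, $j \leq \alpha^\sharp(x) \leq \bigvee_{s \in S} \alpha^\sharp(s)$. Complete join-primeness of $j$ then gives some $s \in S$ with $j \leq \alpha^\sharp(s)$, hence $x = \beta(j) \leq s$.

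It remains to check $x \neq \bot$, which is needed if the definition of completely join-prime excludes $\bot$ (via $S = \emptyset$). The argument above already covers this: if $x \leq \bigvee \emptyset = \bot$, then $j \leq \alpha^\sharp(\bot) = \bot$, contradicting $j \in J^\infty C$.

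So the whole argument reduces to adjunction bookkeeping together with the facts that $\alpha^\sharp$ is complete and restricts to $\alpha$ on $L$. I do not expect any genuine obstacle, and compactness of $L^\delta$ is not needed.
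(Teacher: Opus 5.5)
There is a genuine gap in the proof of the equality: both of your inequalities are the same one, $\beta(j) \leq x$. By adjointness, $j \leq \alpha^\sharp(x)$ is \emph{equivalent} to $\beta(j) \leq x$. So checking $j \leq \alpha^\sharp(x)$ only re-proves the first direction, as your own phrase ``gives $\beta(j) \leq x$ again'' shows. The adjunction only ever yields upper bounds for $\beta(j)$. To get $x \leq \beta(j)$ you must show $x \leq u$ for every $u \in L^\delta$ with $j \leq \alpha^\sharp(u)$, and your argument never does this.

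This needs real input, because you never used that $j$ is completely join-prime at this point, and the formula fails without that hypothesis. Take $C = L^\delta$ and $\alpha$ the inclusion, so that $\beta = \alpha^\sharp = \mathrm{id}$, and let $c$ be an ideal element not in $L$. Then $\bigwedge\{a \in L \mid c \leq a\}$ is a filter element, so it cannot equal $c$, since filter and ideal elements only meet in $L$.

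The missing step is where the paper's proof does its work. Since ideal elements are meet-dense in $L^\delta$, it suffices to show $x \leq y$ for each ideal element $y \geq \beta(j)$. For such $y$ you have $j \leq \alpha^\sharp(y) = \bigvee\{\alpha(a) \mid a \in L,\ a \leq y\}$, because $\alpha^\sharp$ preserves joins and extends $\alpha$. Complete join-primeness of $j$ then gives some $a \leq y$ with $j \leq \alpha(a)$, whence $x \leq a \leq y$. So compactness is indeed not needed, but density and the join-primeness of $j$ are.

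Your argument for the ``moreover'' part works once you run it on $\beta(j)$ instead of $x$. Using the unit $j \leq \alpha^\sharp(\beta(j))$, from $\beta(j) \leq \bigvee S$ you get $j \leq \alpha^\sharp(s)$, hence $\beta(j) \leq s$, for some $s \in S$; the case $S = \emptyset$, i.e.\ $\beta(j) \neq \bot$, is covered by the same reasoning. As written, the step ``hence $x = \beta(j) \leq s$'' depends on the unproven equality, but this rewrite removes that dependence. The resulting argument is purely formal: a left adjoint of a join-preserving map sends completely join-prime elements to completely join-prime elements. That is more elementary than the paper's route, which observes that $\{a \in L \mid j \leq \alpha(a)\}$ is a prime filter and cites the fact that infima of prime filters are completely join-prime in $L^\delta$.
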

\begin{proof}
The left-to-right inequality is immediate from the facts that $\beta$ is left adjoint to $\alpha^\sharp$, and that $\alpha^\sharp$ extends $\alpha$.
For the right-to-left-inequality, we use that the ideal elements are meet-dense in $L^\delta$.  Let $y \in \mc{I}(L^\delta)$ be arbitrary such that $\beta(j) \leq y$. By the adjunction $\beta \dashv \alpha^\sharp$, we have $j \leq \alpha^\sharp(y)$. By definition of ideal elements and the fact that $\alpha^\sharp$ preserves arbitrary joins, $\alpha^\sharp(y) = \bigvee \{\alpha(a) \ | \ a  \in L, a \leq y\}$. Since $j$ is completely join-prime, pick $a \in L$ such that $a \leq y$ and $j \leq \alpha(a)$. We conclude that $\bigwedge \{a \in L \ | \ j \leq \alpha(a) \} \leq y$.

We now turn to the `moreover' part.  Since $\alpha$ is finite-join-preserving, the set $\{a \in L \ | \ j \leq \alpha(a)\}$ is a prime filter of $L$. Therefore, its infimum is completely join-prime in $L^\delta$ by, e.g.,~\cite[Corollary~3.48]{FusGeh2025}.
\end{proof}
\begin{remark}\label{rem:beta-is-dual}
	Let $f \colon L \to K$ be a morphism between distributive lattices. Post-composing $f$ with the embedding $K \into K^\delta$, we obtain a lattice morphism $f' \colon L \to K^\delta$, and $(f')^\sharp$ coincides with the complete morphism $f^\delta \colon L^\delta \to K^\delta$, by uniqueness in (\ref{eq:sharp-def}). Lemma~\ref{lem:adjoint-type} implies that the left adjoint of $(f')^\sharp = f^\delta$ restricts to a function $g \colon J^\infty K^\delta \to J^\infty L^\delta$. As  explained in Section~\ref{sec:intro}, $J^\infty K^\delta$ and $J^\infty L^\delta$ can be seen as the Priestley dual spaces of $K$ and $L$, respectively, and this function $g$ is then the continuous order-preserving map dual to $f$ under Priestley duality.
\end{remark}
\subsection{Canonical extensions of coherent doctrines}\label{subsec:can-ext-doc}
It is {\it a priori} not obvious that applying the canonical extension construction to a coherent doctrine would again yield a coherent doctrine. That this is true is an important result due to Coumans~\cite[Proposition~9]{coumans_apal}, which we recall in Proposition~\ref{prop:can-ext-of-coh-doc} below.
\begin{definition}
    The \emph{canonical extension} of a coherent doctrine $P \colon \cat\op \to \DL$ is the functor $P^\delta  \colon \cat\op \to \DLplus$, defined as the composite $(-)^\delta \circ P$. We denote by $\eta^P$ the natural transformation $P \To P^\delta$ whose component at an object $c$ of $\cat$ is the inclusion $Pc \into (Pc)^\delta$.
\end{definition}
\begin{proposition}\label{prop:can-ext-of-coh-doc}
    For any coherent doctrine $P \colon \cat\op \to \DL$, the canonical extension $P^\delta \colon \cat\op \to \DLplus$ is a coherent doctrine, and the pair $(\id_\mC, \eta^P)$ is a morphism of coherent doctrines.
\end{proposition}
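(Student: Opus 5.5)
We must show that for each arrow $f\colon c\to c'$ the complete morphism $(Pf)^\delta\colon (Pc')^\delta\to(Pc)^\delta$ has a left adjoint satisfying Frobenius and Beck--Chevalley, and that $\eta^P$ commutes with the existential quantifiers. The plan is to take as candidate left adjoint the extension $(\exists_f)^\delta$ of the finite-join-preserving map $\exists_f\colon Pc\to Pc'$, where $(\exists_f)^\delta=(\exists_f)^\sigma=(\exists_f)^\pi$ by the result recalled before Proposition~\ref{prop:can-ext-left-adjoint-functor}. Functoriality of $P^\delta$ is immediate from Proposition~\ref{prop:can-ext-left-adjoint-functor}, since $P^\delta=(-)^\delta\circ P$.

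\emph{Adjunction.} First, $(Pf)^\delta$ preserves arbitrary meets, so it has a left adjoint $\lambda$. Our goal is to show $\lambda=(\exists_f)^\delta$, so that $\lambda$ restricts to $\exists_f$ on $Pc$; this gives that $(\id_\mC,\eta^P)$ is a morphism. Since $\lambda$ preserves all joins and filter elements join-generate $(Pc)^\delta$, it suffices to compute $\lambda$ on filter elements $x=\bigwedge F$ with $F\subseteq Pc$ down-directed. For $a\in Pc$, the adjunction $\exists_f\dashv Pf$ in $\DL$ gives $\lambda(a)=\exists_f(a)$: indeed $\exists_f(a)\le v$ iff $a\le (Pf)^\delta(v)$ for $v\in Pc'$, and this extends to all $v$ by meet-density of ideal elements together with compactness. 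For a filter element we need
\[\lambda\bigl(\textstyle\bigwedge F\bigr)=\textstyle\bigwedge_{a\in F}\exists_f(a).\]
The inequality $\le$ follows from monotonicity. For $\ge$, test against an ideal element $y=\bigvee I$ with $\lambda(\bigwedge F)\le y$. Then $\bigwedge F\le (Pf)^\delta(y)=\bigvee_{b\in I}Pf(b)$, so compactness gives $a\in F$ and $b\in I$ with $a\le Pf(b)$. Hence $\exists_f a\le b\le y$, which establishes the equality. Thus $\lambda$ agrees with $(\exists_f)^\sigma$.

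\emph{Frobenius and Beck--Chevalley.} These are equations between maps that preserve joins in each argument, at least in the relevant variable. By complete distributivity it suffices to verify them on filter elements, and then to reduce to elements of $Pc$ using down-directed meets.

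For Frobenius, the identity $\lambda(\phi\wedge f^*\psi)=\lambda\phi\wedge\psi$ is join-preserving in $\phi$, so we may take $\phi=\bigwedge F$ a filter element. In $\psi$, one side preserves all meets while the other does not obviously do so. I would therefore first treat $\psi$ a filter element, using $\lambda$ of a directed meet of elements of $Pc$ as computed above, together with the fact that meets distribute over directed meets. I would then pass to general $\psi$ by writing $\psi$ as a join of filter elements: the left side preserves joins in $\psi$, and the right side does too, because $(-)\wedge\lambda\phi$ preserves joins in a $DL^+$.

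For Beck--Chevalley, both sides, $(Pf_1)^\delta\circ\lambda_{f_2}$ and $\lambda_{\pi_1}\circ(P\pi_2)^\delta$, preserve all joins. It is therefore enough to check them on filter elements $\bigwedge F$. There the left side equals $\bigwedge_a f_1^*\exists_{f_2}a$, since $(Pf_1)^\delta$ preserves meets. The right side equals $\bigwedge_a\exists_{\pi_1}\pi_2^*a$ by the filter-element formula for $\lambda$, because $\pi_2^*$ maps the directed family $F$ to a directed family. The two agree by Beck--Chevalley in $P$.

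The main obstacle is the adjunction computation, specifically the formula for $\lambda$ on filter elements, which depends on compactness. Once that is in hand, the remaining steps are routine density arguments.
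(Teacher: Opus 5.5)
Your proof is correct. It differs from the paper mainly in being self-contained: the paper gives no argument of its own for this proposition. It attributes the result to Coumans~\cite{coumans_apal}. In Remark~\ref{rem:adjunction-can-ext} it identifies the left adjoint of $(Pf)^\delta$ by citing the general fact that canonical extension preserves adjunctions, $(\exists_f)^\sigma \dashv (f^*)^\pi$ \cite{DGP2005}.

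You instead verify everything directly from density and compactness. Your key step is the formula $\lambda(\bigwedge F)=\bigwedge_{a\in F}\exists_f(a)$ for down-directed $F\subseteq Pc$. It reproves the special case of the adjunction-lifting result needed here. It is also exactly the description of $\exists_f^\sigma$ on filter elements that the paper uses later in the proof of Lemma~\ref{lem:sharp-automatic}. Frobenius and Beck--Chevalley then reduce to the corresponding equations in $P$, by join-preservation in each variable and join-density of filter elements. This uses that the families $\{a\wedge Pf(b)\}$ and $P\pi_2[F]$ are again down-directed and non-empty.

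The cited route is shorter and relies on general results from canonical extension theory. Yours is elementary and shows that density and compactness are the only ingredients needed.

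One wording point: your reductions do not use complete distributivity as such. They use join-density of filter elements together with the frame law $u\wedge\bigvee S=\bigvee_{s\in S}(u\wedge s)$, which holds in any $DL^+$.
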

    Note that, in the statement of Proposition~\ref{prop:can-ext-of-coh-doc}, strictly speaking, $(\id_\mC, \eta^P)$ is a morphism from $P$ to $U \circ P^\delta$, where $U$ is the forgetful functor $\DLplus \to \DL$. When no ambiguity can arise, we will omit such occurrences of $U$ from our notation.
\begin{remark}\label{rem:adjunction-can-ext}
    Adjunctions lift to the canonical extension: if $\exists_f \colon L \to M$ is left adjoint to $f^* \colon M \to L$, then $(\exists_f)^\sigma$ is left adjoint to $(f^*)^\pi$; see, e.g., \cite[Proposition~3.6]{DGP2005}. As a consequence, in the canonical extension $P^\delta$ of a coherent doctrine $P \colon \cat\op \to \DL$, the left adjoint $\exists^{P^\delta}_f$ is the $\sigma$-extension of $\exists^P_f$. When $P$ is clear from the context, we therefore denote this map by $\exists_f^\sigma$.
\end{remark}

\begin{definition}
    A \emph{$DL^+$-doctrine} is a coherent doctrine that takes values in the category $\DLplus$. A \emph{morphism} of $DL^+$-doctrines is a morphism $(M,\alpha)$ of coherent doctrines such that each component of $\alpha$ is moreover a complete morphism. Transformations are as in Definition~\ref{dfn:coh-doc}.  We denote the 2-category of $DL^+$-doctrines by $\CohDocplus$.
\end{definition}
\begin{remark}\label{rem:DL-plus-Heyting}
    As pointed out earlier, any $DL^+$ is in particular a Heyting algebra, but morphisms in the category $\DLplus$ do not necessarily preserve the Heyting implication. However, if $Q \colon \mD\op \to \DLplus$ is a $DL^+$-doctrine, then $Qf$ must preserve the Heyting implication for any arrow $f$ in $\mD$. This follows from a more general fact: if $Q \colon \mD\op \to \DL$ is a functor such that $Qd$ is a complete Heyting algebra for all objects $d$ in $\mD$, then the Frobenius condition is equivalent to the statement that $Qf$ preserves the Heyting implication for all arrows $f$ in $\mD$; see, e.g.,~\cite[Proposition~V.1.1]{JT}.  %
\end{remark}
Another consequence of the Frobenius condition, which we will use in Section~\ref{sec:extensible}, is the following.
\begin{lemma}\label{lem:non-empty-meets}
    Let $e \colon L \leftrightarrows L' : h$ be an adjunction between complete lattices such that $h \circ e = \id_L$, and suppose that, for all $a \in L$ and $b \in L'$, 
   the Frobenius condition $e(a) \wedge b = e(a \wedge h(b))$ is satisfied. Then $e$ preserves all non-empty meets.
\end{lemma}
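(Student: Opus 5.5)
We must show that for a non-empty family $\{a_i\}_{i \in I}$ in $L$, $e(\bigwedge_i a_i) = \bigwedge_i e(a_i)$. Since $e$ is monotone, $e(\bigwedge_i a_i) \leq \bigwedge_i e(a_i)$ holds automatically, so the whole task is the reverse inequality. The plan is to use the Frobenius condition with $b = \bigwedge_i e(a_i)$, together with $h \circ e = \id_L$ and the fact that $h$, being a right adjoint, preserves all meets.

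First fix an index $i_0 \in I$; this is the only place where non-emptiness enters. Put $b = \bigwedge_i e(a_i) \in L'$. Then $b \leq e(a_{i_0})$, so $b = e(a_{i_0}) \wedge b$. Applying Frobenius with $a = a_{i_0}$ gives
\[ b = e(a_{i_0}) \wedge b = e\bigl(a_{i_0} \wedge h(b)\bigr). \]
Next compute $h(b)$: since $h$ preserves arbitrary meets and $h e = \id_L$,
\[ h(b) = \bigwedge_i h(e(a_i)) = \bigwedge_i a_i . \]
Hence $a_{i_0} \wedge h(b) = \bigwedge_i a_i$, and therefore $b = e(\bigwedge_i a_i)$, which is the desired equality.

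No step here is a real obstacle. The only points needing care are that $h$ preserves all meets, because it is a right adjoint between complete lattices, and that the empty family must be excluded, since Frobenius cannot be invoked without an element $a_{i_0}$. Indeed $e(\top)$ need not equal $\top$: for a mono $f$, the map $\exists_f$ does not preserve the top element.
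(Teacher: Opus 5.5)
Your proof is correct and follows the paper's argument step for step. Both fix one element of the non-empty family, rewrite $\bigwedge e[S]$ via Frobenius as $e(s \wedge h(\bigwedge e[S]))$, and compute $h(\bigwedge e[S]) = \bigwedge S$ using that $h$ preserves meets and $h \circ e = \id_L$.
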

\begin{proof}
    Let $S \subseteq L$ be non-empty. Write $s_0 := \bigwedge S$ and $s_0' := \bigwedge e[S]$. Fix $s \in S$. Since $s_0' \leq e(s)$, we have 
    \[ s_0' = e(s) \wedge s_0' = e(s \wedge h(s_0')) \ . \]
    Using that $h$ preserves all meets, and that $h \circ e = \id_L$, we have 
    \[ h(s_0') = \bigwedge_{s \in S} h(e(s)) = s_0 \ . \] 
    Combining the two equalities, we see that $s_0' = e(s \wedge s_0) = e(s_0)$, as required.
\end{proof}

For any category $\mc{C}$, the adjunction $\DL \leftrightarrows \DLplus$ yields an adjunction between functor categories $[\mc{C}\op,\DL] \leftrightarrows [\mc{C}\op,\DLplus]$, where the unit and counit of the latter adjunction are given pointwise by the unit and counit of $\DL \leftrightarrows \DLplus$.  In particular, we have the following result.
\begin{proposition}\label{prop:sharp-natural}
    Let $P \colon \mC\op \to \DL$ be a coherent doctrine, let $Q \colon \mD\op \to \DLplus$ be a $DL^+$-doctrine, and let $(M,\alpha) \colon P \to Q$ be a coherent morphism. There exists a unique natural transformation $\alpha^\sharp \colon P^{\delta} \To Q \circ M\op$ such that, for each object $c$ of $\mC$, the restriction of $\alpha^\sharp_c$ to $Pc$ is equal to $\alpha_c$.
\end{proposition}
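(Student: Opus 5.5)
Define $\alpha^\sharp$ componentwise and then check naturality. For each object $c$ of $\mC$, the component $\alpha_c \colon Pc \to QMc$ is a morphism of distributive lattices whose codomain $QMc$ is a $DL^+$. By the universal property of Proposition~\ref{prop:can-ext-left-adjoint-functor}, as expressed in \cref{eq:sharp-def}, there is a unique complete morphism $(\alpha_c)^\sharp \colon (Pc)^\delta \to QMc$ extending $\alpha_c$. Set $\alpha^\sharp_c \isdef (\alpha_c)^\sharp$.

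Uniqueness is then immediate. Suppose $\alpha^\sharp$ is to be a natural transformation $P^\delta \To Q\circ M\op$ in $[\mC\op,\DLplus]$, so that its components are morphisms in $\DLplus$. Each component is then a complete morphism restricting to $\alpha_c$, and the uniqueness clause in \cref{eq:sharp-def} determines it.

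For naturality, fix $f \colon c \to c'$ in $\mC$. We must show that the two complete morphisms
\[ QMf \circ \alpha^\sharp_{c'} \quad\text{and}\quad \alpha^\sharp_c \circ (Pf)^\delta , \]
both of type $(Pc')^\delta \to QMc$, coincide. Both are complete:
\begin{itemize}
\item $(Pf)^\delta$ is complete because it is the image of $Pf$ under the functor $\DL\to\DLplus$;
\item $QMf$ is complete because $Q$ takes values in $\DLplus$.
\end{itemize}
By uniqueness in \cref{eq:sharp-def}, applied to $Pc'$ with codomain $QMc$, it suffices to show that the two composites agree on $Pc'$.

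On an element $a\in Pc'$, the first composite gives $QMf(\alpha_{c'}(a))$. For the second, note that $(Pf)^\delta$ restricts to $Pf$ on $Pc'$, because $(Pf)^\delta$ is the canonical extension of $Pf$ and so agrees with $Pf$ on $Pc'$. Hence the second composite gives $\alpha_c(Pf(a))$. These two values are equal by naturality of $\alpha$.

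Equivalently, this is the transpose of a naturality square under the pointwise adjunction $[\mC\op,\DL]\leftrightarrows[\mC\op,\DLplus]$: $\alpha^\sharp$ is the adjoint transpose of $\alpha \colon P \To U\circ Q\circ M\op$.

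No step is a real obstacle. The only point needing care is that $(Pf)^\delta$ agrees with $Pf$ on $Pc'$. This holds because $(-)^\delta$ is a functor left adjoint to the forgetful functor, whose unit is the inclusion, so the unit's naturality square gives exactly this restriction property.
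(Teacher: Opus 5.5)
Your proposal is correct and follows the paper's approach. The paper obtains $\alpha^\sharp$ as the adjoint transpose of $\alpha$ under the pointwise adjunction $[\mC\op,\DL] \leftrightarrows [\mC\op,\DLplus]$ induced by Proposition~\ref{prop:can-ext-left-adjoint-functor}, and your componentwise definition, with naturality checked via uniqueness in (\ref{eq:sharp-def}), is exactly that argument unpacked (including the right observation that uniqueness needs the components to be complete morphisms).
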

By Proposition~\ref{prop:can-ext-of-coh-doc}, the object assignment $P \mapsto P^\delta$ associates to any coherent doctrine a $DL^+$-doctrine, and this assignment readily extends to a functor from $\CohDoc$ to $\CohDocplus$. However, as already observed in the work of Coumans~\cite[p.~150]{coumans_phd}, this assignment does \emph{not} satisfy a universal property analogous to (\ref{eq:sharp-def}), i.e., the functor $(-)^\delta$ is not left adjoint to the forgetful functor from $\CohDocplus$ to $\CohDoc$. %

Let $(M,\alpha) \colon P \to Q$ be a morphism of coherent doctrines with $Q$ a $DL^+$-doctrine.  Although, via Proposition~\ref{prop:sharp-natural}, we always obtain a unique natural transformation $\alpha^\sharp \colon P^\delta \Rightarrow Q \circ M\op$ whose restriction is $\alpha$, the pair $(M,\alpha^\sharp)$ can fail to be a morphism of $DL^+$-doctrines because the square
\[
\begin{tikzcd}
	P^\delta c \ar{r}{\exists^\sigma_f} \ar{d}[']{\alpha^\sharp_c}& P^\delta c' \ar{d}{\alpha^\sharp_{c'}} \\
	QMc \ar{r}[']{\exists^Q_{Mf}} & QMc'
\end{tikzcd}
\]
may not commute.  We will give an explicit instance of such a failure in Example~\ref{exa:not-universal}, involving the following two doctrines, which will continue to play an important role in the later sections.
\begin{definition}\label{dfn:power-doc}
    The \emph{power set $DL^+$-doctrine} is the functor $\Powplus \colon \Set\op \to \DLplus$ which sends a set $S$ to the complete lattice $\mc{P}S$ of subsets of $S$, ordered by inclusion, and a function $f \colon S \to T$ to the inverse image morphism $\Powplus f := f^{-1} \colon \mc{P}T \to \mc{P}S$.

    The \emph{power set coherent doctrine} is the functor $\Pow := U \circ \Powplus$, where $U \colon \DLplus \to \DL$ is the forgetful functor.
\end{definition}
\begin{remark}
    As the names suggest, $\Powplus$ is a $DL^+$-doctrine and $\Pow$ is a coherent doctrine. When no ambiguity can arise, we sometimes also write $\Pow$ for $\Powplus$ and refer to either as `the power set doctrine'.
\end{remark}
\begin{example}\label{exa:not-universal}
    Proposition~\ref{prop:sharp-natural} gives a natural transformation $u^\sharp \colon \Pow^\delta \Rightarrow \Powplus$, where $u$ denotes the identity transformation $\Pow = U \circ \Powplus$. We will now show that the pair $(\id_{\Set}, u^\sharp)$ is \emph{not} a morphism of coherent doctrines, because not all the squares for the left adjoints in the square (\ref{eq:doc-morphism}) commute. Denote by $!$ the unique function from $\mbb{N}$ to $1$. We claim that the following square does not commute: 
    \[
        \begin{tikzcd}
            \mc{P}(\mbb{N})^\delta \ar{r}{\exists^{\mc P^\delta}_!} \ar{d}[']{u_\mbb{N}^\sharp} & \mc{P}(1)^\delta \ar{d}{u_1^\sharp} \\
            \mc{P}(\mbb{N}) \ar{r}[']{\exists^{\mc P}_{!}} & \mc{P}(1).
        \end{tikzcd}
    \]
    Let $\mathfrak{F}$ be the collection of co-finite subsets of $\mbb N$, and write $x := \bigwedge \mathfrak{F}$ for the infimum of $\mathfrak{F}$, computed in $\mc{P}(\mbb N)^\delta$.
    Now, using that $u_\mbb{N}^\sharp$ is a complete morphism extending the identity on $\mc{P}(\mbb N)$, we have
    \[ u_{\mbb N}^\sharp(x) = \bigcap_{a \in \mathfrak{F}} u_{\mbb N}^\sharp(a) = \bigcap \mathfrak{F} = \emptyset \ . \] 
    Thus, $\exists_!^\mc{P} u_{\mbb{N}}^\sharp(x) = \emptyset$, since $\exists_!^\mc{P}$, being a left adjoint, preserves the minimum.

	On the other hand, we will now prove that $u_1^\sharp(\exists_!^{\mc{P}^\delta}x) \neq \emptyset$. By Remark~\ref{rem:can-ext-finite}, $u_1^\sharp = u_1$, the identity on $\mc{P}(1)$, so it is equivalent to show that $\exists_!^{\mc{P}^\delta}x \neq \emptyset$. Since $\emptyset = \mc{P}^\delta!(\emptyset)$, applying the fact that $\exists_!^{\mc{P}^\delta}$ is left adjoint to $\mc{P}^\delta!$, we see that this is in turn equivalent to $x \neq \emptyset$ in $\mc{P}(\mbb{N})^\delta$. The latter holds by compactness, since any finite subset of $\mathfrak{F}$ has a non-empty intersection.
	Thus, $u^\sharp_1 \circ \exists^{\power^\delta}_!  \neq \exists_!^\power \circ u^\sharp_\N$ as desired.
\end{example}
Example~\ref{exa:not-universal} prompts us to pose the question: for \emph{which} morphisms from a coherent doctrine to a $DL^+$-doctrine is the lifting still a morphism? We will provide an answer in Section~\ref{sec:extensible}. 

\section{Extensible morphisms of coherent doctrines}\label{sec:extensible}
The main result of this section is the following characterisation of morphisms from a coherent doctrine to a $DL^+$-doctrine that admit liftings; we will call such morphisms \emph{extensible} (see Definition~\ref{dfn:extensible} below).
\begin{theorem}\label{thm:extensible}
    Let $P \colon \mC\op \to \DL$ be a coherent doctrine, let $Q \colon \mD\op \to \DLplus$ be a $DL^+$-doctrine, and let $(M,\alpha) \colon P \to Q$ be a coherent morphism. The following are equivalent:
    \begin{enumerate}
        \item \label{it:restrict} there exists a morphism $P^\delta \to Q$ in $\CohDocplus$ whose restriction to $P$ is $(M,\alpha)$;
        \item the pair $(M,\alpha^\sharp) \colon P^\delta \to Q$ is a morphism in $\CohDocplus$;
        \item \label{it:proj-lax} for any objects $c_1, c_2$ of $\mC$, and $\pi_1 \colon c_1 \times c_2 \to c_1$ the left projection,
the following diagram lax commutes:
    \[
        \begin{tikzcd}
   P^\delta(c_1 \times c_2) \ar{d}[']{\alpha_{c_1\times c_2}^\sharp} \ar{r}{(\exists_{\pi_1}^P)^\sigma} & P^\delta c_1 \ar{d}{\alpha_{c_1}^\sharp} \arrow[ld, "\geq"{sloped, description}, phantom] \\ 
   QM(c_1\times c_2) \ar{r}[']{\exists_{M\pi_1}^Q}        & QMc_1.
        \end{tikzcd}
    \] 
    \end{enumerate}
\end{theorem}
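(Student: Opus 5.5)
The plan is to show (1)$\Rightarrow$(2)$\Rightarrow$(3)$\Rightarrow$(2). Clearly (2) implies (1). For (1)$\Rightarrow$(2), let $(N,\beta)\colon P^\delta\to Q$ be a morphism in $\CohDocplus$ restricting to $(M,\alpha)$. Then $N=M$, and each $\beta_c$ is a complete morphism extending $\alpha_c$. By the uniqueness in Proposition~\ref{prop:sharp-natural}, $\beta=\alpha^\sharp$. The implication (2)$\Rightarrow$(3) is immediate, since (3) is one half of the Beck-Chevalley square for $f=\pi_1$. All the work is therefore in (3)$\Rightarrow$(2). By Proposition~\ref{prop:sharp-natural}, $\alpha^\sharp$ is already natural with complete components, so only the Beck-Chevalley squares \eqref{eq:doc-morphism} for $P^\delta$ need checking. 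By Remark~\ref{rem:adjunction-can-ext}, the existential maps of $P^\delta$ are $\exists^\sigma_f$.

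First, one inequality holds for every arrow $f\colon c\to c'$. To show $\exists^Q_{Mf}\alpha^\sharp_c(u)\le \alpha^\sharp_{c'}\exists^\sigma_f(u)$, use the adjunction $\exists^Q_{Mf}\dashv QMf$ to rewrite it as $\alpha^\sharp_c(u)\le QMf\,\alpha^\sharp_{c'}\exists^\sigma_f(u)$. By naturality, the right-hand side equals $\alpha^\sharp_c(P^\delta f\,\exists^\sigma_f u)$. This dominates $\alpha^\sharp_c(u)$ by the unit of the adjunction and monotonicity of $\alpha^\sharp_c$. So only the inequality $\alpha^\sharp_{c'}\exists^\sigma_f\le \exists^Q_{Mf}\alpha^\sharp_c$ remains. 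Hypothesis (3) gives exactly this for left projections. By symmetry (precompose with the swap isomorphism, using naturality and functoriality of $\exists$), it also holds for right projections.

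Next, reduce a general $f$ to a projection and a mono. Factor $f$ as the graph $\langle \id_c,f\rangle\colon c\to c\times c'$, which is a split mono, followed by the projection $c\times c'\to c'$. Both $\exists^\sigma_{(-)}$ and $\exists^Q_{M(-)}$ are left adjoints of functorial maps, so they compose ($M$ preserves the composite). Hence the Beck-Chevalley squares paste, and it suffices to treat monomorphisms $m\colon c\to c'$.

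For a mono $m$, both sides of the remaining inequality preserve arbitrary joins, as composites of left adjoints and complete morphisms. Since filter elements join-generate $P^\delta c$, it suffices to check a filter element $x=\bigwedge S$, where $S=\{a\in Pc \mid x\le a\}$ is a non-empty filter. By the definition of the $\sigma$-extension on filter elements, $\exists^\sigma_m(x)=\bigwedge_{a\in S}\exists_m(a)$. Because $\alpha^\sharp_{c'}$ is complete, this gives
\[\alpha^\sharp_{c'}\exists^\sigma_m(x)=\bigwedge_{a\in S}\alpha_{c'}\exists_m(a)=\bigwedge_{a\in S}\exists^Q_{Mm}\alpha_c(a).\]
Since $M$ preserves finite limits, $Mm$ is a mono. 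Remark~\ref{rem:mono-BC} then gives $QMm\circ\exists^Q_{Mm}=\id$, and Frobenius holds in $Q$. So Lemma~\ref{lem:non-empty-meets} applies: $\exists^Q_{Mm}$ preserves non-empty meets, and the last expression equals $\exists^Q_{Mm}\bigl(\bigwedge_{a\in S}\alpha_c(a)\bigr)=\exists^Q_{Mm}\alpha^\sharp_c(x)$.

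The main delicate point is this reduction: confirming that the graph/projection factorisation is compatible with pasting of the lax squares, and that the filter-element formula for $\exists^\sigma_m$ is valid. Projections are precisely where non-empty meets fail to be preserved, as Example~\ref{exa:not-universal} shows, and that is why hypothesis (3) is needed only there.
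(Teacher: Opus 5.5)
Your proposal is correct and follows essentially the same route as the paper. Both use the mate argument for the automatic inequality, strict commutation for monomorphisms via filter elements and Lemma~\ref{lem:non-empty-meets}, and the graph/projection factorisation of an arbitrary $f$. The only difference is that the paper takes the graph $G_f \colon c \to c' \times c$, so the second factor is already the left projection and your swap-isomorphism step is not needed.
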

In \ref{it:restrict} of Theorem~\ref{thm:extensible}, by the \emph{restriction} of a morphism $(N,\beta) \colon P^\delta \to Q$ in $\CohDocplus$, we mean the pair $(N,\beta') \colon P \to Q$, where the component of $\beta'$ at an object $c$ of $\mC$ is the restriction of the complete morphism $\beta_c \colon P^\delta c \to QNc$ to the sublattice $Pc \subseteq P^\delta c$.

In the remainder of this section, let $P \colon \mC\op \to \DL$ be a coherent doctrine, let $Q \colon \mD\op \to \DLplus$ be a $DL^+$-doctrine, and let $(M,\alpha) \colon P \to Q$ be a coherent morphism. 
The crucial step towards proving Theorem~\ref{thm:extensible} is the following lemma.
\begin{lemma}\label{lem:sharp-automatic}
    For any arrow $f \colon c \to c'$ in $\mC$, the following diagram lax commutes:
    \[
        \begin{tikzcd}
   P^\delta c\ar{d}[']{\alpha_{c}^\sharp} \ar{r}{\exists_{f}^\sigma} & P^\delta c' \ar{d}{\alpha_{c'}^\sharp} \arrow[ld, "\leq"{sloped, description}, phantom] \\ 
   QM c \ar{r}[']{\exists_{Mf}}        & QMc'.
        \end{tikzcd}
    \] 
    If, moreover, $f$ is a monomorphism, then the diagram strictly commutes.
\end{lemma}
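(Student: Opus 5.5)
The plan is to read the lax commutation as the inequality $\exists_{Mf}\circ\alpha^\sharp_c \leq \alpha^\sharp_{c'}\circ\exists^\sigma_f$. This is the direction forced by Example~\ref{exa:not-universal}, where the lower path gives $\emptyset$ and the upper path does not. I would prove it by transposing across the adjunction $\exists_{Mf}\dashv QMf$. So it suffices to show $\alpha^\sharp_c \leq QMf\circ\alpha^\sharp_{c'}\circ\exists^\sigma_f$.

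By the naturality of $\alpha^\sharp$ (Proposition~\ref{prop:sharp-natural}) we have $QMf\circ\alpha^\sharp_{c'} = \alpha^\sharp_c\circ (f^*)^\delta$. The required inequality therefore becomes $\alpha^\sharp_c(u)\leq \alpha^\sharp_c\big((f^*)^\delta\exists^\sigma_f(u)\big)$ for $u\in P^\delta c$. This follows from monotonicity of $\alpha^\sharp_c$ together with the unit inequality $u\leq (f^*)^\delta\exists^\sigma_f u$. That unit inequality holds because $\exists^\sigma_f$ is left adjoint to $(f^*)^\pi=(f^*)^\delta$, by Remark~\ref{rem:adjunction-can-ext}. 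This settles the general lax statement.

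For the monomorphism case we need the reverse inequality. Both composites preserve arbitrary joins: $\exists^\sigma_f$ and $\exists_{Mf}$ are left adjoints, and the maps $\alpha^\sharp$ are complete morphisms. Since filter elements join-generate $P^\delta c$, it suffices to check the reverse inequality on a filter element $x$.

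Let $A=\{a\in Pc \mid x\leq a\}$. This set is non-empty since it contains the top element. By the definition of the $\sigma$-extension on filter elements, $\exists^\sigma_f(x)=\bigwedge_{a\in A}\exists_f a$. Since $\alpha^\sharp_{c'}$ preserves meets and $(M,\alpha)$ is a coherent morphism, this gives
\[
\alpha^\sharp_{c'}\exists^\sigma_f(x)=\bigwedge_{a\in A}\alpha_{c'}(\exists_f a)=\bigwedge_{a\in A}\exists_{Mf}\alpha_c(a).
\]
On the other side, $\exists_{Mf}\alpha^\sharp_c(x)=\exists_{Mf}\big(\bigwedge_{a\in A}\alpha_c(a)\big)$. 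So the remaining task is to show that $\exists_{Mf}$ commutes with non-empty meets.

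This is the key step, and I would obtain it from Lemma~\ref{lem:non-empty-meets}. The steps are as follows.
\begin{enumerate}
\item $M$ preserves finite limits, so $Mf$ is again a monomorphism, since monos are characterised by a pullback square.
\item Remark~\ref{rem:mono-BC}, applied to the doctrine $Q$, gives $QMf\circ\exists_{Mf}=\id$.
\item Frobenius reciprocity holds in $Q$.
\item The fibres $QMc$ and $QMc'$ are complete lattices.
\end{enumerate}
These are exactly the hypotheses of the lemma, with $e=\exists_{Mf}$ and $h=QMf$.

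The only delicate points are getting the direction of the inequality right and noticing that non-emptiness of $A$ is what makes Lemma~\ref{lem:non-empty-meets} applicable. Everything else is a formal manipulation of adjunctions.
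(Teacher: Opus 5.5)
Your proposal is correct and follows essentially the same route as the paper. The lax inequality comes from the unit of $\exists^\sigma_f \dashv P^\delta f$, naturality of $\alpha^\sharp$, and transposition across $\exists_{Mf} \dashv QMf$. The monomorphism case reduces to filter elements and applies Lemma~\ref{lem:non-empty-meets} via Remark~\ref{rem:mono-BC}, with the top element ensuring that $\{a \in Pc \mid x \leq a\}$ is non-empty.
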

\begin{proof}
   Essentially, the first statement of the lemma is appealing to the categorical \emph{mate}.  Explicitly, since $\exists_f^\sigma$ is left adjoint to $P^\delta f$ by Remark~\ref{rem:adjunction-can-ext}, we have $\id_{P^\delta c} \leq P^\delta f \circ \exists_f^\sigma$. Therefore, $\alpha^\sharp_c \leq \alpha^\sharp_c \circ P^\delta f \circ \exists_f^\sigma$. By naturality of $\alpha^\sharp$ (Proposition~\ref{prop:sharp-natural}), $\alpha^\sharp_c \circ P^\delta f = QMf \circ \alpha^\sharp_{c'}$, so $\alpha^\sharp_c \leq QMf \circ \alpha^\sharp_{c'} \circ \exists_f^{\sigma}$. Since $\exists_{Mf}$ is left adjoint to $QM f$, the desired inequality $\exists_{Mf} \circ \alpha_c^\sharp \leq \alpha^{\sharp}_{c'} \circ \exists^\sigma_f$ now follows.

    If $f$ is a monomorphism, then so is $Mf$, since $M$ preserves finite limits. Thus, by Remark~\ref{rem:mono-BC}, we obtain $QMf \circ \exists_{Mf} = \id_{Mc}$.  By Lemma~\ref{lem:non-empty-meets}, $\exists_{Mf}$ thus preserves all non-empty meets. Now note that it suffices to prove the equality $\alpha^\sharp_{c'}(\exists_f^\sigma x) = \exists_{Mf}(\alpha_c^\sharp x)$ for a filter element $x \in \mc F(Pc)$, since any element of $P^\delta c$ is a join of such elements, and both $\alpha^\sharp_{c'} \circ \exists_f^\sigma$ and $\exists_{Mf} \circ \alpha_{c}^\sharp$ preserve arbitrary joins.

    Let $x \in \mc F(Pc)$ be arbitrary. We have $x = \bigwedge \{ a \in Pc, x \leq_{P^\delta c} a \}$, and this meet is non-empty since $\top_{Pc}$ is in the set on the right hand side. Now
\begin{align*}
    \alpha_{c'}^\sharp(\exists_f^\sigma x) &= \alpha_{c'}^\sharp\left(\bigwedge \{ \exists_f a \ | \ a \in Pc, x \leq a\}\right) & \text{(definition of $\exists_f^\sigma$)}\\ 
    &= \bigwedge \{\alpha_{c'} (\exists_f a) \ | \ a \in Pc, x \leq a\} & \text{($\alpha_{c'}^\sharp$ preserves meets and lifts $\alpha_{c'}$)}\\ 
    &=  \bigwedge \{\exists_{Mf} (\alpha_c a) \ | \ a \in Pc, x \leq a\} & \text{($(M,\alpha)$ is a morphism of doctrines)} \\ 
    &= \exists_{Mf} (\alpha_c^\sharp x). & \text{($\exists_{Mf} \circ \alpha_{c}^\sharp$ preserves non-empty meets), }
\end{align*}
which is the required equality.
\end{proof}

\begin{proof}[Proof of Theorem~\ref{thm:extensible}]
    (i) $\iff$ (ii). If there exists a morphism of $DL^+$-doctrines $(M, \beta) \colon P^\delta \to Q \circ M\op$ whose restriction to $P$ is $(M,\alpha)$, then $\beta$ must be natural, and thus equal to $\alpha^\sharp$ by Proposition~\ref{prop:sharp-natural}. Conversely, if $(M,\alpha^\sharp)$ is a morphism, then its restriction is clearly $(M,\alpha)$.

    (ii) $\iff$ (iii). Since $\alpha^\sharp$ is always a natural transformation from $P^\delta$ to $Q\circ M\op$ (Proposition~\ref{prop:sharp-natural}), (ii) is equivalent to the condition that all squares as in (\ref{eq:doc-morphism}) commute for $\alpha^\sharp$. By Lemma~\ref{lem:sharp-automatic}, this is in turn equivalent to the statement that, for every $f \colon c \to c'$, the following square lax commutes:
    \begin{equation}\label{eq:lax-goal}
        \begin{tikzcd}
            P^\delta c \ar{r}{\exists^{\sigma}_f} \ar{d}[']{\alpha^\sharp_c} & P^\delta c' \ar{d}{\alpha^\sharp_{c'}}  \arrow[ld, "\geq"{sloped, description}, phantom]\\
            QMc \ar{r}[']{\exists_{Mf}} & QMc'.
        \end{tikzcd}
    \end{equation}
    It is thus clear that (ii) $\Rightarrow$ (iii). Assume (iii) holds, and let $f \colon c \to c'$ be an arbitrary arrow in $\mC$. Since $\mC$ has products, we can take the \emph{graph} $G_f$ of $f$, i.e.\ the universally induced arrow
		\[
		\begin{tikzcd}
			& c \ar{ld}[']{f} \ar[dashed]{d}{G_f} \ar[equals]{rd} \\
			c' & \ar{l}{\pi_1} c' \times c \ar{r} & c.
		\end{tikzcd}
		\]
        The graph of any arrow is a monomorphism, so the equality $\alpha^\sharp_{c' \times c} \circ \exists_{G_f}^\sigma = \exists_{M(G_f)} \circ \alpha^\sharp_{c}$ holds by Lemma~\ref{lem:sharp-automatic}. Moreover, the inequality $\alpha_{c'}^\sharp \circ \exists_{\pi_1}^\sigma \leq \exists_{M \pi_1} \circ \alpha^\sharp_{c' \times c}$ holds by assumption.
        Therefore, using that left adjoints are functorial, we obtain
        \[ \alpha_{c'}^\sharp \circ \exists_f^\sigma = \alpha_{c'}^\sharp \circ \exists_{\pi_1}^\sigma \circ \exists_{G_f}^\sigma \leq \exists_{M \pi_1} \circ \alpha^\sharp_{c' \times c} \circ \exists_{G_f}^\sigma = \exists_{M\pi_1} \circ \exists_{M(G_f)} \circ \alpha^\sharp_c = \exists_{Mf} \circ \alpha^\sharp_c, \]
        which is the required inequality. 
\end{proof}

\begin{definition}\label{dfn:extensible}
    We say that a morphism $(M,\alpha) \colon P \to Q$, with $P$ a coherent doctrine and $Q$ a $DL^+$-doctrine, is \emph{extensible} if it satisfies the equivalent properties of Theorem~\ref{thm:extensible}.
\end{definition}

We record an immediate consequence of Theorem~\ref{thm:extensible} for future reference.
\begin{corollary}\label{cor:extensible-cjp}
    Let  $P \colon \mC\op \to \DL$ be a coherent doctrine, $Q \colon \mD\op \to \DLplus$ a $DL^+$-doctrine, and $(M, \alpha) \colon P \to Q$ a morphism. Suppose that, for each object $c$ of $\mC$, $X(c)$ is a subset of $P^\delta(c)$ which join-generates $P^\delta(c)$. Then $(M,\alpha)$ is extensible if and only if for any objects $c_1,c_2$ of $\mc C$ and any $x \in X(c_1 \times c_2)$,
    \begin{equation}\label{eq:extensible-inequality}
    \alpha_{c_1}^\sharp(\exists_{\pi_1}^\sigma(x)) \leq \exists_{M\pi_1}(\alpha^\sharp_{c_1\times c_2}(x)) .
    \end{equation}
\end{corollary}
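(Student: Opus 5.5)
By Theorem~\ref{thm:extensible}, extensibility of $(M,\alpha)$ is equivalent to condition~\ref{it:proj-lax}. That condition states that for all objects $c_1,c_2$ of $\mC$ and all $u \in P^\delta(c_1\times c_2)$,
\[
\alpha_{c_1}^\sharp(\exists^\sigma_{\pi_1}(u)) \leq \exists_{M\pi_1}(\alpha^\sharp_{c_1\times c_2}(u)).
\]
The plan is to show that this inequality holds for all $u$ if and only if it holds for all $u \in X(c_1\times c_2)$. The direction ``only if'' is trivial, since $X(c_1\times c_2) \subseteq P^\delta(c_1\times c_2)$. So the content is the converse, which I would prove by a join-preservation argument.

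Assume the inequality holds on $X(c_1\times c_2)$, and let $u \in P^\delta(c_1 \times c_2)$ be arbitrary. Since $X(c_1\times c_2)$ join-generates, write $u = \bigvee S$ with $S = \{x \in X(c_1\times c_2) \mid x \leq u\}$. Both sides of the inequality are composites of join-preserving maps:
\begin{itemize}
\item $\exists^\sigma_{\pi_1}$ is the left adjoint of $P^\delta\pi_1$ by Remark~\ref{rem:adjunction-can-ext}, so it preserves arbitrary joins;
\item $\alpha^\sharp_{c_1}$ and $\alpha^\sharp_{c_1\times c_2}$ are complete morphisms;
\item $\exists_{M\pi_1}$ is a left adjoint in $Q$.
\end{itemize}
Hence
\[
\alpha_{c_1}^\sharp(\exists^\sigma_{\pi_1}(u)) = \bigvee_{x\in S}\alpha_{c_1}^\sharp(\exists^\sigma_{\pi_1}(x)) \leq \bigvee_{x\in S}\exists_{M\pi_1}(\alpha^\sharp_{c_1\times c_2}(x)) = \exists_{M\pi_1}(\alpha^\sharp_{c_1\times c_2}(u)).
\]
This is exactly condition~\ref{it:proj-lax}, so $(M,\alpha)$ is extensible by Theorem~\ref{thm:extensible}.

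There is no real obstacle. The only point to check is that every map involved preserves arbitrary joins, including the empty join, which is needed when $S$ is empty. This holds because left adjoints and complete morphisms preserve all joins.
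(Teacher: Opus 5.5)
Your proposal is correct and follows the same approach as the paper, which also reduces to condition~\ref{it:proj-lax} of Theorem~\ref{thm:extensible} because every map involved preserves arbitrary joins. You spell out the decomposition $u = \bigvee S$, and you justify each join-preservation claim: $\exists^\sigma_{\pi_1}$ is left adjoint to $P^\delta\pi_1$, the $\alpha^\sharp$ maps are complete morphisms, and $\exists_{M\pi_1}$ is a left adjoint.
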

\begin{proof}
    Since all maps involved preserve arbitrary joins, the stated condition is equivalent to \cref{it:proj-lax} in Theorem~\ref{thm:extensible}.
\end{proof}
Note that Corollary~\ref{cor:extensible-cjp} applies when $X(c) \subseteq P^\delta(c)$ is taken to be the subset of filter elements $\mc{F}(Pc)$, or, assuming a choice principle, the subset of of completely join-prime elements $J^\infty (P^\delta c)$. 

\begin{remark}\label{rem:p-model1}
	We explain how our notion of extensibility is a doctrinal version of the notion of $p$-model that figures in \cite{makkai_topos_of_types} and \cite{coumans_apal}.
In light of the equivalence between completely join-prime elements and prime filters and of the definitions of $(-)^\sigma$ and $(-)^\sharp$ given in Section~\ref{sec:prelims}, using Corollary~\ref{cor:extensible-cjp}, we see that $(M,\alpha)$ is extensible if and only if, for  any objects $c_1, c_2$ of $\cat$ and any prime filter $\rho$ in $P(c_1\times c_2)$, we have the inequality
\[ \bigwedge_{a \in \rho} \alpha_{c_1}\exists_{\pi_1}(a) \leq \exists_{M\pi_1}\left(\bigwedge_{a \in \rho} \alpha_{c_1 \times c_2}(a)  \right) \ . \] 
In~\cite[Definition~5.2.7]{coumans_phd}, following \cite[Section~1]{makkai_topos_of_types}, a $p$-model is defined as a coherent functor $F \colon \mC \to \mD$ between coherent categories such that, for any morphism $f \colon c \to c'$ in $\mC$ and any prime filter $\rho$ in $\Sub_{\mC}(c)$, we have the equality
\[ \bigwedge_{a \in \rho} \exists_{Ff}(Fa) = \exists_{Ff}\left( \bigwedge_{a \in \rho} Fa\right)\ .\] 
As also noted there, this equality is equivalent to the left-to-right inequality, as the other inequality always holds.
Since $F$ is coherent, the left-hand-side can be rewritten as $\bigwedge_{a \in \rho} F\exists_f (a)$. 
From this and our work earlier in this section, we can conclude that our notion of extensibility is a doctrinal version of the definition of $p$-model of \cite{makkai_topos_of_types} and \cite{coumans_apal}.
\end{remark}
\begin{example}
	Let $\DL_f \subseteq \DL$ denote the subcategory of finite distributive lattices, and let $\Poset_f$ denote the category whose objects are finite posets and whose morphisms are monotone functions.  Consider the functor $[-,\two] \colon \Poset_f\op \to \DL_f \subseteq \DL$, where a finite poset $P$ is sent to the distributive lattice of monotone functions from $P$ into $\two$ equipped with the pointwise ordering (equivalently, the lattice of up-sets of $P$; recall that this is the functor that witnesses Birkhoff duality).  This functor describes a coherent doctrine, where the left adjoint to $- \circ f \colon [Q,\two] \to [P,\two]$, for a monotone map $f \colon P \to Q$, sends $g \in [P,\two]$ to the map $\exists_{f}(g) \colon Q \to \two$ given by 
	\begin{align*}
	\exists_{f}g (q) & = \bigvee \set{g(p)}{p \in P \text{ such that} f(p) \leq q} , \\
	& = 
	\begin{cases}
		\top & \text{if there is $p \in P$ with $f(p) \leq q$ and $g(p) = \top$,} \\
		\bot & \text{otherwise.}
	\end{cases}
	\end{align*}
	In an analogous fashion, for any distributive lattice $A$, the functor $[-,A] \colon \Poset_f\op \to \DL$ is a coherent doctrine.
	
	Let us consider morphisms of the form $(\id_{\Poset_f},\alpha) \colon [-,A] \to [-,\two]$.  %
	Note that $[-,\two]$ is a $DL^+$-doctrine (indeed, each fibre is finite), and so it makes sense to ask what it means for $(\id_{\Poset_f},\alpha)$ to be extensible.  By Corollary~\ref{cor:extensible-cjp}, it suffices to check that, for any pair of finite posets $P, Q$ and a filter element $x \in \mc{F}([P \times Q,A])$, we have that $\alpha^\sharp_P(\exists_{\pi_1}^\sigma(x)) \leq \exists_{\pi_1}(\alpha^\sharp_{P \times Q}(x))$, i.e.\ there is an inclusion of the corresponding up-sets.  Recall, from the proof of Lemma~\ref{lem:sharp-automatic}, that 
	\[
	\alpha^\sharp_P(\exists_{\pi_1}^\sigma(x)) = \bigwedge \lrset{\exists_{\pi_1}\alpha_{P \times Q}(g)}{g \in [P \times Q,A], \, x \leq g}.
	\]
	Thus, an element $p \in P$ is contained in the up-set $\alpha^\sharp_P(\exists_{\pi_1}^\sigma(x)) \subseteq P$ if and only if, for all $g \in [P \times Q, A]$ with $g \geq x$, there exists some $q_g \in Q$ such that the pair $(p,q_g)$ is contained in the up-set $\alpha_{P \times Q} (g) \subseteq P \times Q$.  Conversely, we readily calculate that $p$ is contained in $\exists_{\pi_1}(\alpha^\sharp_{P \times Q}(x))$ if and only if there exists some $q \in Q$ such that, for all $g \geq x$, the pair $(p,q)$ is contained in $\alpha_{P \times Q} (g)$.  Thus, if we are given a family of $q_g$'s that depend on $g$ for which $(p,q_g) \in \alpha_{P \times Q} (g)$, we must find a single $q' \in Q$ for which $(p,q')  \in \alpha_{P \times Q} (g)$.  In particular, if $Q$ were a join-semilattice, then we can ensure the existence of such a $q'$ by taking the join $q' \isdef \bigvee_{g \geq x} q_g$ since, by monotonicity, we arrive at
	\[
	(p,q_g) \in \alpha_{P \times Q} (g) \implies (p,q') \in \alpha_{P \times Q} (g).
	\]
	This pattern of seeking a uniform witness from a dependent choice prefigures the introduction of model-theoretic saturation in the next section.
\end{example}

\section{Points}\label{sec:points}
In this section, we give a model-theoretic interpretation to the notion of extensible morphism, and in Theorem~\ref{thm:saturated} we establish its close connection with $\omega$-saturation. To this end, we first recall the different notions of \emph{point} of a doctrine, and how they correspond to \emph{models} of a theory.

For $L$ a distributive lattice, we call a morphism from $L$ to the two-element distributive lattice $\two$ a \emph{coherent point} of $L$. A coherent point $h \colon L \to \two$ uniquely determines a prime filter $x_h := h^{-1}(1)$ of $L$. For $C$ a $DL^+$, we call a complete morphism from $C$ to $\two$ a \emph{complete point} of $C$. A complete point $h \colon C \to \two$ uniquely determines a completely join prime element $x_h := \bigwedge h^{-1}(1)$ of $C$.
By the universal property (\ref{eq:sharp-def}), the complete points of $L^\delta$ are in bijection with the coherent points of $L$.

When defining points in the context of coherent and $DL^+$ doctrines, the distributive lattice $\two$ is replaced by the power set doctrines $\Pow$ and $\Powplus$ (see Definition~\ref{dfn:power-doc}), respectively.
\begin{definition}\label{dfn:points}
    Let $P$ be a coherent doctrine. A \emph{coherent point} of $P$ is a morphism $(M,\alpha) \colon P \to \Pow$ in $\CohDoc$.  We denote the category of points of $P$ and their transformations by $\Pt(P)$.

    Let $Q$ be a $DL^+$-doctrine. A \emph{complete point} of $Q$ is a morphism $(M,\alpha) \colon Q \to \Powplus$ in $\CohDocplus$.  We denote the category of complete points of $Q$ and their transformations by $\Ptc(Q)$.
\end{definition}
Let $P$ be a coherent doctrine. Instantiating Definition~\ref{dfn:extensible} with $Q := \Powplus$, we see that the complete points of $P^\delta$ are in bijection with the \emph{extensible} coherent points of $P$.

Let us now briefly recall the connection between coherent points and models of a coherent theory. For simplicity, we restrict ourselves to the case of coherent theories that are single-sorted and relational, i.e., having no function symbols. A \emph{coherent formula} is built from atomic predicates using logical operators $\wedge$, $\vee$, $\bot$,  $\top$ and $\exists$, and a \emph{coherent theory} is then formulated using appropriate sequent derivation rules of the form ${\phi({x}) \vdash \psi({x})}$, which should be read as ``for all ${x}$, if $\phi({x})$ then $\psi({x})$'' (for more on the syntax of coherent logic, see \cite[\S D1]{elephant}). A \emph{model} $\mc M$ of a coherent theory $\theory$ is a set, together with an interpretation of each symbol of the signature of $\theory$, such that all sequents in $\theory$ are interpreted in $\mc M$ as true statements. 

In Definition~\ref{dfn:syntactic-doctrine}, we recall how to build a syntactic doctrine for coherent theory, which is a coherent doctrine on base category $\Finset\op$, and in Remark~\ref{rem:points-as-models} we recall how coherent points of this doctrine correspond to models of the coherent theory. For further details and proofs, 
see, e.g., \cite[\S~8]{makkai-reyes} or \cite[\S~5]{coumans_phd}.
\begin{definition}\label{dfn:syntactic-doctrine}
    Let $\theory$ be a (single-sorted, relational) coherent theory. The \emph{syntactic doctrine} of $\theory$ is the functor $D_\theory \colon \Finset \to \DL$ which sends a finite set $X$ to the distributive lattice of $\theory$-equivalence classes of $\theory$-formulae whose free variables are in $X$, and which sends a function $r \colon X \to Y$ to the homomorphism $D_\theory r \colon D_\theory X \to D_\theory Y$ induced by the substitution function $\phi(x_1,\dots,x_n) \mapsto \phi(r(x_1),\dots,r(x_n))$. 
\end{definition}
In what follows, by a common abuse of notation, we will treat elements of $D_\theory X$ as formulae, rather than as $\theory$-equivalence classes of formulae. This abuse of notation will be harmless since all our arguments can be done modulo $\theory$-equivalence.
\begin{remark}\label{rem:descr_of_left_adjoint_syntactic}
	The functor $D_\theory \colon \Finset \to \DL$ defines a coherent doctrine.  For a function $ r \colon X \to Y$, the left adjoint to $D_\theory r$ is the map $\exists^{D_\theory}_r \colon D_\theory Y \to D_\theory X$ given by 
	\[
	\psi(y_1, \dots , y_m) \mapsto \exists y_1, \dots , y_m . \, \psi(y_1, \dots y_m) \land x_1 = y_{r(1)} \land \dots \land x_n = y_{r(n)}, 
	\]
	where $y_{r(i)}$ denotes the image of $x_i \in X$ under $r$. In particular, for a coproduct inclusion $i_X \colon X \to X + Y$, we have $\exists^{D_\theory}_{i_X}(\psi(x,y)) \equiv \exists y . \, \psi(x,y)$ (where $x$ and $y$ now denote tuples of variables), and for the co-diagonal map $\nabla_X \colon X + X\to X$, we have $\exists^{D_\theory}_{\nabla_X}(\phi(x)) \equiv \phi(x) \land x = x'$.  For further details that this satisfies the Beck-Chevalley condition, etc., see \cite{seely}.
\end{remark}
\begin{remark}\label{rem:points-as-models}
    When $\theory$ is a coherent theory and $D_\theory$ its syntactic doctrine, the coherent points of $D_\theory$ correspond to the models of $\theory$: if $\mc M$ is a model with underlying set $M$, then we can define a morphism $(M, \class{-}^{\mc M}) \colon D_\theory \to \Pow$ as follows. Let $M \colon \Finset\op \to \Set$ be the functor defined on objects by sending a finite set $X$ to the set $M^X$ of $X$-tuples in $M$, and defined on morphisms by sending a function $r \colon X \to Y$ to the function $Mr \colon M^Y \to M^X$, which associates any $Y$-tuple $m \in M^Y$ with the $X$-tuple $m \circ r \in M^X$. For any formula $\phi$ with free variables in $X$, let $\class{\phi}^{\mc M}_X := \{ m \in M^X \ | \ M \models \phi(m)\}$, i.e.\ the interpretation of $\phi$ in $\mc{M}$. Since $\mc{M} \models \theory$ is a model, this assignment is well-defined on $\theory$-equivalence classes of formulae, and the pair $(M, \class{-}^{\mc M})$ thus defined is a morphism in $\CohDoc$. We call $(M, \class{-}^{\mc M})$ the \emph{associated point} of $\mc M$. When $\mc M$ is clear from the context, we will write $\class{-}$ instead of $\class{-}^{\mc M}$. Note that the model $\mc M$ can be recovered from its associated point: the underlying set is the value of $M$ at a singleton set and $\class{-}^{\mc M}$ in particular records the interpretation of all predicate symbols. 
    This assignment $\mc M \mapsto (M, \class{-}^{\mc M})$ is the object part of an isomorphism between the category of models of $\theory$ and the category of points of $D_\theory$ (where the morphisms of points are the transformations of Definition~\ref{dfn:coh-doc}). See, e.g., \cite[\S~1]{Wri2026} for more details.
\end{remark}

In Theorem~\ref{thm:saturated}, we will instantiate Corollary~\ref{cor:extensible-cjp} to the specific case of a point associated to a model of a coherent theory, in the sense of Remark~\ref{rem:points-as-models}. Working towards this, let us calculate what the two sides of the inequality (\ref{eq:extensible-inequality}) in Corollary~\ref{cor:extensible-cjp} look like in this specific case.

Let $\mc M$ be a model of a coherent theory $\theory$, with associated point $(M, \class{-}) \colon D_\theory \to \Pow$. For any finite sets $X, Y$, their product in $\Finset\op$ can be realized as the disjoint union $X+Y$ and the left projection is then realized as the left inclusion $i_X \colon X \to X + Y$. The homomorphism $D_\theory i_X \colon D_\theory X \to D_\theory (X+Y)$ includes the formulae with free variables in $X$ as a sublattice of the formulae with free variables in $X+Y$. Recall from Remark~\ref{rem:descr_of_left_adjoint_syntactic} that its left adjoint $\exists_{i_X} \colon D_\theory (X+Y) \to D_\theory X$ sends a formula $\phi(x,y)$ to $\exists y.\, \phi(x,y)$, where `$x$' denotes the tuple of variables in $X$ and `$y$' denotes the tuple of variables in $Y$. 
Let $t$ be a filter element of $D_\theory^\delta(X+Y)$, and write $\tau := \{\phi \in D_\theory(X+Y) \ | \ t \leq \phi\}$. The $\sigma$-extension map $\exists_{i_X}^\sigma$ sends $t$ to $\bigwedge \{ \exists y . \, \phi \ | \ \phi \in \tau\}$, a filter element in $D_\theory^\delta X$. Since $\class{-}^\sharp_X$ preserves meets, we thus obtain the following expression for the left-hand-side of (\ref{eq:extensible-inequality}):
\begin{equation} \label{eq:consistent-types}
    \class{\exists_{i_X}^\sigma (t)}^\sharp_X = \left\{ m \in M^X \ \middle\vert \ \text{ for all } \phi \in \tau, \mc{M} \models \exists y.\, \phi(m,y) \right\} \ .
\end{equation} 
For the right-hand-side of (\ref{eq:extensible-inequality}), first note, using that $\class{-}_{X+Y}^\sharp$ preserves meets,  
\[ \class{t}_{X + Y}^\sharp = %
\left\{ (m,n) \in M^{X+Y} \ \middle\vert \ \text{ for all } \phi \in \tau, \mc{M} \models \phi(m,n)\right\} \ . \] 
The function $Mi_X \colon M^{X+Y} \to M^X$ sends an $(X+Y)$-tuple $(m,n)$ to the $X$-tuple $m$, and $\exists_{Mi_X} \colon \Pow(M^{X+Y}) \to \Pow (M^X)$ takes the direct image under $Mi_X$. Therefore,
\begin{equation}\label{eq:realized-types}
    \exists_{Mi_X}(\class{t}_{X+Y}^\sharp) = \left\{ m \in M^X \ \middle\vert \ \text{ there exists } n \in M^Y \text{ such that for all } \phi \in \tau, \mc{M} \models \phi(m,n)\right\} \ .
\end{equation}
We will now use the equalities (\ref{eq:consistent-types}) and (\ref{eq:realized-types}) to give a new perspective on the following  notions from positive model theory; cf.~\cite[\S 3.3]{kamsma} for instance.
\begin{definition}\label{df:saturated}
    Let $\mc M$ be a model of a coherent theory $\theory$ and let $X, Y$ be disjoint sets of variables. A \emph{$Y$-type with $X$-parameters in $\mc{M}$} is a pair $(\tau, m)$, where $\tau$ is a set of formulae with free variables in $X \cup Y$, and  $m \in M^X$ is an $X$-tuple of elements of $\mc M$.

    Let $(\tau,m)$ be a $Y$-type with $X$-parameters in $\mc M$. We say that $(\tau,m)$ is:
    \begin{enumerate}
        \item \emph{consistent} if, for every $\phi \in \tau$, there exists $n \in M^Y$ such that $\mc M \models \phi(m,n)$;
        \item \emph{realised} if there exists $n \in M^Y$ such that, for every $\phi \in \tau$, $\mc M \models \phi(m,n)$.
    \end{enumerate}
    The model $\mc M$ is \emph{$\omega$-saturated} if, for any finite $X, Y$, any consistent $Y$-type with $X$-parameters in $\mc M$ is realised.
\end{definition}
\begin{remark}
	We have deliberately omitted the word `positively' from our notion of $\omega$-saturation, since it is clear by context.  See Remark~\ref{rem:p-model2} for further discussion of our choice of terminology.
\end{remark}
\begin{remark}\label{rem:types-primes}
	Note that our notion of type is often called a \emph{partial type} by others (e.g., \cite[Definition 2.2.3]{kamsma}).  Classically, partial types in context $Y$ are distinguished from \emph{complete} types, the maximal (with respect to inclusion) consistent subsets of $D_\theory(Y)$, which are more commonly abbreviated to just `types'.  Note again that our types are with respect to positive logic, and so will in general be different from the types found in classical model theory (cf.\ \cite{haykazyan_types}).
	
	Later, in addition to partial and complete types, we will wish to speak of \emph{prime} types.  In classical logic, i.e., where every formula has a negation, we are motivated to consider complete types since a type $\tau$ is complete if and only if there is a model $\mc{M}$ and some $m \in M^Y$ such that
	\[
	\tau = \lrset{\phi \, \text{classical}}{\mc{M} \vDash \phi(m)}.
	\]
	However, in positive logic, this equivalence breaks down.  Instead, for a filter $\tau$ of coherent formulae with free variables in $Y$, we have the following equivalence (also see Subsection~\ref{subsec:types-doc} below):
	\begin{enumerate}
		\item $\tau$ is of the form $\tp_{\mc{M}}(m) = \lrset{\phi}{\mc{M} \vDash \phi(m)}$ for some model $\mc{M}$ and tuple $m \in M^Y$ (note that this is the \emph{positive} type of $m$), which we call the \emph{type of} $m$;
		\item $\tau$ is \emph{prime} in the sense that $\bot \not \in \tau$ and if $\phi \lor \psi \in \tau$, then either $\phi \in \tau$ or $\psi \in \tau$.
	\end{enumerate}
	For our purposes, it is more fitting to speak of partial types and prime types.  Nonetheless, complete (positive) types play an important role in positive model theory -- they are precisely the types realised by elements of a \emph{positively closed} model (\cite[Proposition 2.2.2]{kamsma}).
\end{remark}
\begin{theorem}\label{thm:saturated}
    Let $\mc M$ be a model of a coherent theory $\theory$, with the associated point ${(M, \class{-}) \colon D_\theory \to \Pow}$. The following are equivalent:
    \begin{enumerate}
        \item the point $(M, \class{-})$ is extensible;
        \item the model $\mc M$ is $\omega$-saturated.
    \end{enumerate}
    In particular, the $\omega$-saturated models of $\theory$ correspond to the complete points of $D_\theory^\delta$.
\end{theorem}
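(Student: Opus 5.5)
We apply Corollary~\ref{cor:extensible-cjp} with $X(c) := \mc F(D_\theory c)$, the filter elements of $D_\theory^\delta c$; these join-generate by density. By that corollary, the point $(M,\class{-})$ is extensible exactly when inequality~(\ref{eq:extensible-inequality}) holds for every pair of finite sets $X,Y$ and every filter element $t$ of $D_\theory^\delta(X+Y)$. Recall that products in $\Finset\op$ are disjoint unions $X+Y$ and that the left projection is the inclusion $i_X$. The two sides of this inequality have already been computed in (\ref{eq:consistent-types}) and (\ref{eq:realized-types}). Writing $\tau=\{\phi \mid t\le \phi\}$, which is a filter of formulae, the inequality says: for every $m\in M^X$, if $\mc M\models \exists y.\,\phi(m,y)$ for all $\phi\in\tau$, then some single $n\in M^Y$ satisfies every $\phi\in\tau$. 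In the language of Definition~\ref{df:saturated}, this reads ``$(\tau,m)$ consistent implies $(\tau,m)$ realised.''

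For (ii)$\Rightarrow$(i), $\omega$-saturation applies directly to the type $(\tau,m)$, so the inequality holds.

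For (i)$\Rightarrow$(ii), start with an arbitrary consistent type $(\tau,m)$, where $\tau$ is an arbitrary set of formulae in $X\cup Y$. Let $\tau'$ be the filter it generates in $D_\theory(X+Y)$, and let $t := \bigwedge \tau$ in $D_\theory^\delta(X+Y)$. This $t$ is a filter element, and $\{\phi \mid t\le\phi\}=\tau'$; the containment ``$\subseteq$'' is where compactness of the canonical extension is used.

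The main obstacle is that consistency of $\tau$ does not automatically give consistency of $\tau'$, since consistency is a formula-by-formula condition. We therefore need a uniform witness for each finite conjunction $\phi_1\wedge\dots\wedge\phi_k$ of members of $\tau$. Two fixes are available:
\begin{itemize}
\item Read Definition~\ref{df:saturated} as intended, with $\tau$ closed under finite conjunctions, i.e.\ consistency meaning finite satisfiability in the usual sense. Then replacing $\tau$ by $\tau'$ changes neither consistency nor realisation: a single $n$ realises $\tau$ iff it realises $\tau'$, because realisation is preserved upward along $\le$ and under finite meets.
\item Otherwise, reduce to finite conjunctions first. If the definition literally allows non-filter $\tau$, saturation in the per-formula sense is strictly stronger, and we would argue that the theorem's notion is the finitely-satisfiable one.
\end{itemize}
With $\tau'$ consistent, extensibility gives the inequality at $t$, and hence a realising $n$.

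The ``in particular'' clause then follows by combining this equivalence with the observation after Definition~\ref{dfn:points}, that complete points of $D_\theory^\delta$ correspond to extensible coherent points of $D_\theory$, and with Remark~\ref{rem:points-as-models}, which identifies coherent points of $D_\theory$ with models of $\theory$.
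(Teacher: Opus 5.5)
Your proposal is correct and is essentially the paper's proof: instantiate Corollary~\ref{cor:extensible-cjp} at filter elements over $X+Y$, read off the two sides from (\ref{eq:consistent-types}) and (\ref{eq:realized-types}), and use Remark~\ref{rem:points-as-models} for the final clause. Your worry about Definition~\ref{df:saturated} is justified, but your first fix is the only viable reading, not one of two options: the paper tacitly works only with the filters $\tau=\{\phi \mid t\le\phi\}$, and under the literal formula-by-formula reading (i)$\Rightarrow$(ii) actually fails (take $\tau=\{y=x_1,\ y=x_2\}$ with two distinct parameters in any model of pure equality, all of whose points are extensible by Example~\ref{ex:finitely_many_formulae}).
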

\begin{proof}
By Corollary~\ref{cor:extensible-cjp} instantiated for the morphism $(M,\class{-})$, (i) is equivalent to the statement that $\class{\exists_{i_X}^\sigma (t)}^\sharp_X \subseteq \exists_{Mi_X}(\class{t}_{X+Y}^\sharp)$ holds for any finite sets $X, Y$, with $i_X \colon X \to X+Y$ the left inclusion, and any $t \in \mc F(D_\theory^\delta(X))$. By the equalities (\ref{eq:consistent-types}) and (\ref{eq:realized-types}), this is in turn equivalent to the statement that $\mc M$ is $\omega$-saturated.  The `in particular' statement now follows in light of Remark~\ref{rem:points-as-models}.
\end{proof}
\begin{remark}
	For the reader familiar with the theory of polyadic spaces developed in~\cite{GooMar2024}, we explain how Theorem~\ref{thm:saturated} relates to the definition of `$\omega$-saturated model' given there.

	Let $\theory$ be a single-sorted relational coherent theory\footnote{The definitions in \cite{GooMar2024} are given at a greater level of generality, but for simplicity we specialize to the setting of single-sorted relational coherent theories here.}, with associated syntactic doctrine $D_\theory \colon \Finset \to \DL$.
	In \cite{GooMar2024}, the authors consider the \emph{polyadic Priestley space dual to} $D_\theory$, which is defined as the functor $S_\theory \colon \Finset\op \to \mbf{Priestley}$ obtained by post-composing $D_\theory\op$ with the Priestley duality functor $\DL\op \to \mbf{Priestley}$.
	Thus, for any finite set $n$, the poset underlying $S_\theory(n)$ is $J^\infty D_\theory^\delta(n)$, or alternatively, the prime filters of $D_\theory(n)$ under inclusion. 
	It follows that the canonical extension doctrine $D_\theory^\delta$ is isomorphic to the doctrine $\mathrm{Up} \circ S_\theory$, where $\mathrm{Up} \colon \mbf{Priestley}\op \to \DLplus$ is the functor that sends a Priestley space to the $DL^+$ of up-sets of its underlying poset.

	In this context, a model of $\theory$, i.e., a point of $D_\theory$, yields, via duality, a set $X$ together with a natural transformation from $X^{(-)}$ to $S_\theory$, 
	where $X^{(-)}$ denotes the functor $\Finset\op \to \Set$ that sends a finite set $n$ to $X^n$ and acts on morphisms by precomposition.
	Among all natural transformations $X^{(-)} \Rightarrow S_\theory$, those which correspond to models are precisely those satisfying a certain exactness condition called the weak interpolation property \cite[Proposition~5.6]{GooMar2024}. In this setting, $\omega$-saturated models are then \emph{defined} as those for which a stronger exactness property called the interpolation property holds \cite[Definition~5.7]{GooMar2024}.

	Given a natural transformation $\tau \colon X^{(-)} \Rightarrow S_\theory$, we can take its inverse image component-wise and obtain a natural transformation $\tau^{-1} \colon \mathrm{Up} \circ S_\theory \Rightarrow \mc{P} \circ X^{(-)}$. Under the isomorphism $D_\theory^\delta \cong \mathrm{Up} \circ S_\theory$, $\tau^{-1}$ may be seen as a natural transformation $D_\theory^\delta \Rightarrow \mc{P} \circ X^{(-)}$. Now, $\tau$ has the interpolation property if and only if the pair $(X^{(-)}, \tau^{-1})$ is a $\CohDocplus$-morphism $D_\theory^\delta \to \mc{P}$.
	Thus, $\omega$-saturated models in the sense of \cite{GooMar2024} correspond precisely to $\CohDocplus$-morphisms $D_\theory^\delta \to \mc{P}$. 
	By Theorem~\ref{thm:extensible}, these are precisely the extensible points of $D_\theory$. In conclusion, the $\omega$-saturated models in the sense of \cite{GooMar2024} are the same as ours, by Theorem~\ref{thm:saturated}.
\end{remark}

\begin{example}\label{ex:finitely_many_formulae}
	Suppose that $D_\theory \colon \Finset \to \DL_f \subseteq \DL$ factors through the subcategory of finite distributive lattices.  Since the canonical extension is inert on finite distributive lattices, and so $D_\theory^\delta = D_\theory$, it follows that every point of $D_\theory$ is extensible.  Thus, if $\theory$ is a coherent theory such that, for any $n$, there are a finite number of formulae in $n$ free variables up to $\theory$-equivalence, every model is $\omega$-saturated (cf.\ \cite[Corollary 12.2.13]{hodges}). Note that if $\theory$ is a classical theory, then there are finitely many formulae in $n$ free variables up to equivalence, for any $n$, if and only if $\theory$ is $\omega$-\emph{categorical}, meaning that there is a unique (up to isomorphism) countable model of $\theory$ (see also Example~\ref{ex:omega-categorical}).
\end{example}
\begin{remark}\label{rem:p-model2}
	We feel that to a select group of readers the correspondence between extensible points and $\omega$-saturation may be already known (cf.\ \cite{butz}), but perhaps not as publicised as it ought to be, and so we conclude this section with a few bibliographic remarks.  As discussed in Remark~\ref{rem:p-model1}, the notion of extensible point we have been discussing is called \emph{$p$-model}  in Makkai's terminology~(\cite[\S 1]{makkai_topos_of_types}), which is also the terminology used by Coumans in \cite{coumans_phd,coumans_apal}.  Makkai writes in \cite{makkai_topos_of_types} that the notion of $p$-model is a generalisation of the notion of (classical) $\omega$-saturation, a sentiment that is echoed in \cite[\S 6.3]{garner}, and in \cite[\S 1.4]{makkai_topos_of_types} it is proved that a classically $\omega$-saturated model is a $p$-model (i.e., a positively $\omega$-saturated model in the sense we have used here).  The correspondence is more explicit with Butz's work \cite{butz}, for instance in Proposition 7.10 {\it op.\ cit.}, which when combined with the observations from \cite{coumans_apal} expresses that $D_\theory^\delta$ is (positively) $\omega$-saturated in an internal sense.
\end{remark}

\section{Presheaves of points}\label{sec:presheaf_pts}
Having studied individual models of a coherent theory $\theory$, it is natural to next study categories of models, i.e., subcategories $\Kcat \hookrightarrow \Mod(\theory) \cong \Pt(D_\theory)$.  In fact, it will be more convenient to consider arbitrary functors $\Kcat \to \Pt(D_\theory)$.  This is because, as we shall see in Construction~\ref{dfn:psh-of-pts}, functors $\Kcat \to \Pt(D_\theory)$ bijectively correspond to presheaf-valued points of $D_\theory$, i.e., morphisms of coherent doctrines $D_\theory \to \Sub_{\Set^\Kcat}$, where $\Sub_{\Set^\Kcat}$ is the doctrine of \emph{subpresheaves}.  This correspondence can be summarised in the slogan `models internal to $\Set^\Kcat$ are $\Kcat$-indexed models'.

Inspired by \cite[\S 6]{coumans_apal}, we wish to identify conditions on a class of models $\Kcat$ so that $\Kcat$ fully describes `all the types' of the theory $\theory$, in a way that will be made precise in Theorem~\ref{thm:iso_iff_types_and_homo} below.
\subsection{Presheaf-valued points and presheaves of points}\label{subsec:twoways}
Let $\Kcat$ be a small category. We denote by $\Set^\Kcat$ the category of (covariant) presheaves on $\Kcat$, i.e., the objects are functors $\Kcat \to \Set$, and the morphisms are natural transformations between such functors.
First, we give the necessary details to justify our slogan: `models internal to $\Set^\Kcat$ are $\Kcat$-indexed models'.  In Definition~\ref{dfn:points}, we defined a coherent point of a doctrine $D$ to be a coherent doctrine morphism from $D$ to $\mc{P}$. Generalizing this definition, a \emph{presheaf-valued} coherent point of $D$ is, by definition, a coherent doctrine morphism from $D$ to $\Sub_{\Set^\Kcat}$. Here, note that, since $\Set^\Kcat$ is a coherent category (see, e.g., \cite[p.~34]{elephant}), $\Sub_{\Set^\Kcat}$ is a coherent doctrine by Example~\ref{exa:subdoc}; we return to the structure of the doctrine $\Sub_{\Set^\Kcat}$ in more detail in Subsection~\ref{subsec:subpsh}, where we will show in particular that $\Sub_{\Set^\Kcat}$ is a $DL^+$-doctrine.

The main interest of presheaf-valued coherent points of $D$ is that they correspond to \emph{presheaves of points}. Below, we explicitly show how to obtain a presheaf of points out of a presheaf-valued point.  
In essence, this is a currying argument. 
We will first give the explicit construction, and then explain (cf. Remark~\ref{rem:justify-psh-of-points}) how one might arrive at it in a more principled way using the adjunction between coherent doctrines and coherent categories.
\begin{construction}\label{dfn:psh-of-pts}
	Let $D \colon \cat\op \to \DL$ be a coherent doctrine, and let $\Kcat$ be a small category. Let $(M,\alpha) \colon D \to \Sub_{\Set^\Kcat}$ be a coherent doctrine morphism.
	
	The \emph{associated presheaf of points} is the following functor $\mc{M} \colon \Kcat \to \Pt(D)$. For $k$ an object of $\Kcat$, we define a point $\mc{M}k := (M_k,\alpha_k) \colon D \to \mc{P}$, as follows:
	\begin{itemize}
		\item the functor $M_k \colon \cat\op \to \Set$ is defined by sending $c \in \ob \cat$ to the set $(Mc)k$, and a morphism $f \colon c \to c'$ of $\cat$ to the function $(Mf)_k \colon (Mc)k \to (Mc')k$;
		\item the natural transformation $\alpha_k \colon D \To \mc{P} \circ (M_k)\op$ has its component at $c \in \ob \cat$ defined to be the function $\alpha_{k,c} \colon Dc \to \mc{P}(M_k c)$ that sends $\phi \in Dc$ to 
		\[ \alpha_{k,c}(\phi) := (\alpha_c \phi)k \ , \] 
		i.e., $\alpha_{k,c}(\phi)$ is the subset of $(Mc)k$ obtained by evaluating at the object $k$ the subpresheaf $\alpha_c \phi$ of $Mc$.
	\end{itemize} 
	For $r \colon k \to k'$ a morphism of $\Kcat$, we define $\mc{M}r \colon \mc{M}k \to \mc{M}k'$ to be the natural transformation from $M_k$ to $M_{k'}$ whose component at $c \in \ob \cat$ is the function $(Mc)r \colon (Mc)k \to (Mc)k'$.
\end{construction}
We omit the proof that $\mc{M}$ is indeed a well-defined functor $\mc{M} \colon \Kcat \to \Pt(D)$, and that the data of a morphism $(M,\alpha)$ can be entirely recovered from its associated presheaf of points, so that we in fact have a bijective correspondence between presheaves of points of $D$ and presheaf-valued points of $D$. The following remark justifies this omission.
\begin{remark}\label{rem:justify-psh-of-points}
	We explain how the theory of coherent categories can be used to clarify and justify Construction~\ref{dfn:psh-of-pts}. To do so, and only for the purposes of this remark, we will make use of the 2-category $\mbf{Coh}$ of coherent categories, coherent functors, and natural transformations, and the fact that the functor $\Sub_{(-)} \colon \mbf{Coh} \to \CohDoc$ has a fully faithful left pseudo-adjoint $\mc{A} \colon \CohDoc \to \mbf{Coh}$, cf., e.g., \cite[\S~1]{Pit1983} or \cite[\S~5.1.3]{coumans_phd}.  In essence, the category $\mc{A}(D)$ is the `syntactic category' of functional predicates in the internal language of $D$.
	
	Let $D \colon \mc{C}\op \to \DL$ be a coherent doctrine, let $\mc{K}$ be a small category, and write $A := \mc{A}(D)$. By the pseudo-adjunction $\mc{A} \dashv \Sub_{(-)}$, we have an equivalence of categories,
	\begin{equation}\label{eq:cohdoc-to-doc} 
		\CohDoc(D, \Sub_{\Set^\mc{K}}) \simeq \mbf{Coh}(A, \Set^{\mc{K}}) \ .
	\end{equation}

	The category $\mbf{Coh}(A, \Set^{\mc K})$ is a subcategory of the functor category $[A, \Set^\Kcat]$.
	The universal property of the functor category construction $\Set^{(-)}$ gives an isomorphism of categories
\[
[A, \Set^\Kcat] \cong  [\Kcat \times A,\Set] \cong [\Kcat,\Set^A] \ .
\]
Since finite limits, existential quantification, and finite joins are all computed point-wise in the coherent category $\Set^\Kcat$, the above isomorphism restricts to an isomorphism 
\begin{equation}\label{eq:coh-fun} 
	\mbf{Coh}(A, \Set^\Kcat) \cong [\Kcat, \mbf{Coh}(A, \Set)] \ . 
\end{equation}
Applying the pseudo-adjunction $\mc{A} \dashv \Sub_{(-)}$ once more, and noting that $\Sub_{\Set} = \mc{P}$, we have
\begin{equation} \label{eq:coh-pt}
	\mbf{Coh}(A,\Set) \simeq \CohDoc(D, \mc{P}) = \Pt(D) \ . 
\end{equation}
Putting (\ref{eq:cohdoc-to-doc}), (\ref{eq:coh-fun}), (\ref{eq:coh-pt}) together, we obtain an equivalence 
\[ \CohDoc(D, \Sub_{\Set^\Kcat}) \simeq [\Kcat, \Pt(D)] \ ,\] 
and the object part of this equivalence functor was made explicit in Construction~\ref{dfn:psh-of-pts}.

We will give another viewpoint on morphisms $D \to \Sub_{\Set^\Kcat}$ in Section~\ref{sec:topos}, where we will see that they also correspond to geometric morphisms from the presheaf topos $\Set^\Kcat$ to a topos $\topos^\coh_D$ defined there.
\end{remark}
\begin{remark}
	Recall from Section~\ref{sec:points} that, if $D$ is the syntactic doctrine of a coherent theory $\mbb{T}$, then coherent points of $D$ can be seen as models of $\mbb{T}$. In this case, if $(M,\alpha) \colon D \to \Sub_{\Set^\Kcat}$ is a coherent morphism, then Construction~\ref{dfn:psh-of-pts} yields a functor from $\Kcat$ to models of $\mbb{T}$.
\end{remark}
\subsection{Subpresheaves and orbits of elements}\label{subsec:subpsh}
We now examine the coherent doctrine $\Sub_{\Set^\Kcat}$ in more detail.  
Recall that subobjects in $\Set^\Kcat$ can be described explicitly as follows: for any presheaf $X \colon \Kcat \to \Set$, a \emph{subpresheaf} of $X$ is a functor $Y \colon \Kcat \to \Set$ such that $Yk \subseteq Xk$ for every object $k$ of $\Kcat$, and, for any morphism $f \colon k \to k'$ of $\Kcat$, the function $Yf$ is the restriction of $Xf$ to $Yk$. In other words, a subobject of the presheaf $X$ is uniquely determined by a family of subsets $(Yk)_{k \in \ob \Kcat}$ of $(Xk)_{k \in \ob \Kcat}$ which has the property of being stable under the action of $X$ on morphisms. From this description, it follows that $\Sub_{\Set^\Kcat}(X)$ is a complete lattice, in which joins and meets are computed by taking object-wise unions and intersections, respectively.  It follows that $\Sub_{\Set^\Kcat}(X)$ is a completely distributive lattice.

For any presheaf $X \colon \Kcat \to \Set$, an \emph{element} of $X$ is a pair $(k,x)$, where $k \in \ob \Kcat$ and $x \in Xk$. The \emph{category of elements} of $X$, which we denote by $\el X$, has as its objects the elements of $X$, and as its morphisms $(k,x) \to (k',x')$ those morphisms $f \colon k \to k'$ in $\Kcat$ such that $(Xf)(x) = x'$. The assignment $X \mapsto \el X$ is the object part of an equivalence between $\Set^\Kcat$ and a category of (discrete) \emph{fibrations} over $\Kcat\op$ (see, e.g.,~\cite[Theorem~2.1.2]{LorRie2020}), but we will only use the object part here.%

Note that any element $(k,x)$ of $X$ defines a subpresheaf 
\begin{align*} 
	\orb_{k,x} \colon \Kcat &\to \Set  \\
	k' &\mapsto \{ x' \in Xk' \ | \ \text{ there exists } f \colon k \to k' \text{ such that } (Xf)x = x' \}
\end{align*} 
of $X$, which we call the \emph{orbit} of the element $(k,x)$ in the presheaf $X$. 
\begin{example}
	Taking $\Kcat$ to be a group $G$, viewed as a one-object category, a presheaf is then simply a set $X$ equipped with a $G$-action.  In this setting, our notion of orbit is precisely the usual sense of the word.
\end{example}
\begin{lemma}\label{lem:orbit-cjp}
	For any element $(k,x)$ of $X$, $\orb_{k,x}$ is the smallest subpresheaf $Y$ of $X$ such that $x \in Yk$. 
\end{lemma}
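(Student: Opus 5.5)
The proof has three parts: $\orb_{k,x}$ is a subpresheaf of $X$, it contains $x$ at stage $k$, and it is contained in every subpresheaf $Y$ with $x \in Yk$. Each part follows directly from the definitions recalled at the start of Subsection~\ref{subsec:subpsh}. Recall that a subpresheaf is a family of subsets $(Yk')_{k' \in \ob \Kcat}$ with $Yk' \subseteq Xk'$ that is stable under the action of $X$ on morphisms.

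First, we check that $\orb_{k,x}$ is stable in this sense. Let $g \colon k' \to k''$ be a morphism and $x' \in \orb_{k,x}(k')$. Choose $f \colon k \to k'$ with $(Xf)x = x'$. By functoriality of $X$,
\[ (Xg)x' = (Xg)(Xf)x = X(g \circ f)x , \]
so $(Xg)x' \in \orb_{k,x}(k'')$, witnessed by the morphism $g \circ f \colon k \to k''$. Hence $\orb_{k,x}$ defines a subpresheaf, with $\orb_{k,x}(g)$ given by restricting $Xg$.

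Second, $x \in \orb_{k,x}(k)$, witnessed by $\id_k$, since $X\id_k = \id_{Xk}$.

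Third, minimality. Let $Y$ be any subpresheaf of $X$ with $x \in Yk$, and let $x' \in \orb_{k,x}(k')$, say $x' = (Xf)x$ for some $f \colon k \to k'$. Since $Y$ is stable under the action of $X$ and $x \in Yk$, we get $x' = (Xf)x \in Yk'$. So $\orb_{k,x}(k') \subseteq Yk'$ for every object $k'$, which means $\orb_{k,x} \leq Y$ in $\Sub_{\Set^\Kcat}(X)$.

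There is no real obstacle here. The only point needing care is the first step, where composition of witnessing morphisms together with functoriality of $X$ gives closure of the orbit.
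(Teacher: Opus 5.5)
Your proof is correct and follows the same argument as the paper: $x \in \orb_{k,x}k$ via $\id_k$, and minimality because any subpresheaf $Y$ with $x \in Yk$ is closed under the maps $Xf$. The only difference is that you also check explicitly that $\orb_{k,x}$ is closed under the action of $X$. The paper takes this for granted when it introduces orbits as subpresheaves.
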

\begin{proof} 
	We have $x \in \orb_{k,x}k$, and if $Y$ is any subpresheaf of $X$ such that $x \in Yk$, then the definition of subpresheaves implies that $\orb_{k,x}k' \subseteq Yk'$ for any object $k'$ of $\Kcat$.
\end{proof}
Lemma~\ref{lem:orbit-cjp} in particular yields that, for any subpresheaf $Y$ of $X$, we have 
\begin{equation}\label{eq:decomp-orbit}
\textstyle	Y = \bigvee_{(k,x) \in \ob \el Y} \orb_{k,x} \ .
\end{equation}
\begin{proposition}\label{prop:sub-psh-dlplus}
For any small category $\Kcat$, and any presheaf $X \colon \Kcat \to \Set$, the completely join-prime elements of $\Sub_{\Set^\Kcat}(X)$ are precisely the orbits of elements of $X$. Moreover, the functor $\Sub_{\Set^\Kcat} \colon \Set^\Kcat \to \DLplus$ is a $DL^+$-doctrine.
\end{proposition}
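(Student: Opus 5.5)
The argument splits into two parts: identifying the completely join-prime elements, and then verifying the doctrine axioms.

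\emph{Orbits are exactly the completely join-prime elements.} Let $(k,x)$ be an element of $X$, and suppose $\orb_{k,x} \leq \bigvee S$ for some $S \subseteq \Sub_{\Set^\Kcat}(X)$. Joins are computed object-wise as unions, so $x \in \orb_{k,x}k \subseteq \bigcup_{Y \in S} Yk$. Hence $x \in Yk$ for some $Y \in S$, and Lemma~\ref{lem:orbit-cjp} gives $\orb_{k,x} \leq Y$. Conversely, let $j$ be completely join-prime. By (\ref{eq:decomp-orbit}) we have $j = \bigvee_{(k,x) \in \ob\el j} \orb_{k,x}$. Complete join-primeness then gives an element $(k,x)$ of $j$ with $j \leq \orb_{k,x}$. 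On the other hand $\orb_{k,x} \leq j$ by Lemma~\ref{lem:orbit-cjp}, so $j = \orb_{k,x}$. The same decomposition (\ref{eq:decomp-orbit}) shows that the orbits join-generate. Together with completeness, this shows that each $\Sub_{\Set^\Kcat}(X)$ is a $DL^+$.

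\emph{The doctrine structure.} The next step is to check that for each natural transformation $f \colon X \to X'$ the pullback map $f^* = \Sub_{\Set^\Kcat}f$ is a complete morphism. Pullback of subpresheaves is computed object-wise as inverse image, $(f^*Y)k = f_k^{-1}(Yk)$. Inverse images preserve arbitrary unions and intersections, and joins and meets of subpresheaves are object-wise unions and intersections, so $f^*$ preserves all of them. Functoriality on $\Set^\Kcat$ holds because pullbacks are computed object-wise, as in Example~\ref{exa:subdoc}.

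\emph{The coherent-doctrine axioms.} These are inherited from Example~\ref{exa:subdoc}: $\Set^\Kcat$ is coherent, so $\Sub_{\Set^\Kcat}$ is a coherent doctrine. Concretely, the left adjoint $\exists_f$ is given by object-wise direct image, which is again a subpresheaf by naturality of $f$. Frobenius reciprocity and Beck--Chevalley then hold because they hold in $\Set$ and everything is computed object-wise.

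The only potential obstacle is making sure the doctrine lands in $\DLplus$ with \emph{complete} morphisms, not merely lattice morphisms. This reduces entirely to the object-wise computation of joins, meets and pullbacks, so I expect no real difficulty.
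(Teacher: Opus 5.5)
Your proof is correct and follows essentially the same route as the paper. Like the paper, you show orbits are completely join-prime via Lemma~\ref{lem:orbit-cjp} and the object-wise computation of joins, and you get the converse and join-generation from the decomposition (\ref{eq:decomp-orbit}). Your explicit check that each pullback map $f^*$ is a complete morphism, as object-wise inverse image, fills in a step the paper leaves implicit.
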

\begin{proof}
	Let $X \in \Set^\Kcat$. We show that the completely join-prime elements of $\Sub_{\Set^\Kcat}(X)$ are precisely the orbits of elements of $X$.
	If $(Y_i)_{i \in I}$ is a family of subpresheaves of $X$ and $\orb_{k,x} \leq \bigvee_{i \in I} Y_i$, then, since $x \in \orb_{k,x}k$, there exists $i \in I$ such that $x \in Y_i k$, and thus $\orb_{k,x} \leq Y_i$. Conversely, if $J$ is completely join-prime in $\Sub_{\Set^\Kcat}(X)$, then (\ref{eq:decomp-orbit}) implies $J = \orb_{k,x}$ for some element $(k,x)$ of $J$.
	The equality (\ref{eq:decomp-orbit}) shows that $\Sub_{\Set^\Kcat}(X)$ is join-generated by the orbits, which are completely join-prime elements, and hence is a $DL^+$ for any presheaf $X$.
\end{proof}
\begin{remark}\label{rem:orbit-order}
	Let $X \colon \Kcat \to \Set$ be a presheaf. The restriction of the partial order $\leq$ on $\Sub_{\Set^\Kcat}(X)$ to the set $J^\infty(\Sub_{\Set^\Kcat}(X))$ of orbits of elements of $X$ can be described as follows, using Lemma~\ref{lem:orbit-cjp}: for any elements $(k,x)$ and $(k',x')$ of $X$, we have 
	\[ \orb_{k',x'} \leq \orb_{k,x} \iff x' \in \orb_{k,x}k' \iff \text{ there exists } f \colon (k,x) \to (k',x') \text{ in } \el X \ . \]
	Thus, the poset $J^\infty(\Sub_{\Set^\Kcat}(X))$ is the poset reflection of the category $\left(\el X\right)\op$.
\end{remark}

\subsection{Types in a doctrine}\label{subsec:types-doc}
As we already saw in Section~\ref{sec:points} above, the extensibility of a point $(M,\class{-})$ of a doctrine $D_\theory$ is closely related to the types realised by the model $\mc M$ of $\theory$ that $(M, \class{-})$ corresponds to. We now study types in the setting of a general coherent doctrine.
\begin{definition}\label{dfn:type}
Let $D \colon \cat\op \to \DL$ be a coherent doctrine and $(M,\alpha) \colon D \to \mc{P}$ a point of $D$. For $c \in \ob \cat$ and $x \in Mc$, the \emph{type of} the element $x$ in $(M,\alpha)$ is 
\[ \tp_{M,\alpha}(x) :=  \{ \phi \in Dc \ | \ x \in \alpha_{c} \phi \} \ . \] 
\end{definition}
\begin{remark}
Note that $\tp_{M,\alpha}(x)$ is a prime filter of $Dc$.  Although we will not expand upon this point here, it follows from the completeness theorem for coherent logic that every prime filter of $Dc$ is of the form $\tp_{M,\alpha}(x)$ for some point $(M,\alpha) \colon D \to \power$ and $x \in Mc$.
\end{remark}

We can characterise types of elements abstractly in terms of the canonical extension and the lifting $\alpha_c^\sharp$. Indeed, let $\beta_c \colon \mc{P}(Mc) \to D^\delta c$ be the left adjoint of the lifting $\alpha_c^\sharp \colon D^\delta c \to \mc{P}(Mc)$. Then, applying Lemma~\ref{lem:adjoint-type} in the context of Definition~\ref{dfn:type}, we see that, for any $x \in Mc$, $\beta_c(\{x\})$ is the infimum of $\tp_{M,\alpha}(x)$ in $D^\delta c$. We will now generalize this observation to presheaf-valued points.

\begin{notation}
Throughout the rest of this section, let $D \colon \cat\op \to \DL$ be a coherent doctrine, $\Kcat$ a small category, $(M,\alpha) \colon D \to \Sub_{\Set^\Kcat}$ a presheaf-valued point of $D$, and $\mc{M} \colon \Kcat \to \Pt(D)$ the associated presheaf of points in the sense of Construction~\ref{dfn:psh-of-pts}. For any $c \in \ob\cat$, we write $\alpha^\sharp_c \colon D^\delta c \to \Sub_{\Set^\Kcat}(Mc)$ for the lifting of $\alpha_c$, and $\beta_c$ for its left adjoint.
\end{notation}

\begin{lemma}\label{lem:beta-orb}
	 For any $c \in \ob\cat$ and any element $(k,x)$ of $Mc$,
	\begin{equation}\label{eq:beta-orb}
		\beta_c(\orb_{k,x}) = \bigwedge \tp_{M_k,\alpha_k}(x) \ ,
	\end{equation}
	where the infimum is taken in $D^\delta c$. Moreover, $\beta_c(\orb_{k,x}) \in J^\infty (D^\delta c)$.
\end{lemma}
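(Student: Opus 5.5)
This follows almost directly from Lemma~\ref{lem:adjoint-type}, applied with $L := Dc$, $C := \Sub_{\Set^\Kcat}(Mc)$ (a $DL^+$ by Proposition~\ref{prop:sub-psh-dlplus}) and $\alpha := \alpha_c$. The adjoint $\beta_c$ here is exactly the $\beta$ of that lemma. By Proposition~\ref{prop:sub-psh-dlplus}, the orbit $\orb_{k,x}$ is a completely join-prime element of $C$. Lemma~\ref{lem:adjoint-type} therefore gives
\[ \beta_c(\orb_{k,x}) = \bigwedge \{ \phi \in Dc \ | \ \orb_{k,x} \leq \alpha_c(\phi) \} , \]
and it also gives the `moreover' part: this element lies in $J^\infty(D^\delta c)$.

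It remains to identify the indexing set with $\tp_{M_k,\alpha_k}(x)$. We have $\orb_{k,x} \leq \alpha_c(\phi)$ if and only if $x \in (\alpha_c \phi)k$; this is Lemma~\ref{lem:orbit-cjp}, since $\alpha_c(\phi)$ is a subpresheaf of $Mc$. By Construction~\ref{dfn:psh-of-pts}, $(\alpha_c\phi)k = \alpha_{k,c}(\phi)$ and $x \in (Mc)k = M_k c$. So the condition says exactly that $x \in \alpha_{k,c}(\phi)$, i.e.\ that $\phi \in \tp_{M_k,\alpha_k}(x)$ by Definition~\ref{dfn:type}. Substituting this into the displayed formula gives \eqref{eq:beta-orb}.

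There is no real obstacle here. The only care needed is to check that the hypotheses of Lemma~\ref{lem:adjoint-type} hold, namely that the codomain is a $DL^+$ and the orbit is completely join-prime. Both are supplied by Proposition~\ref{prop:sub-psh-dlplus}.
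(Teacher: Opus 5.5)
Your proposal is correct and follows the paper's proof essentially step for step: you apply Lemma~\ref{lem:adjoint-type} to the completely join-prime orbit $\orb_{k,x}$ (Proposition~\ref{prop:sub-psh-dlplus}), then use Lemma~\ref{lem:orbit-cjp} and Construction~\ref{dfn:psh-of-pts} to identify the indexing set with $\tp_{M_k,\alpha_k}(x)$. Your explicit check that the codomain $\Sub_{\Set^\Kcat}(Mc)$ is a $DL^+$ is a small point the paper leaves implicit.
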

\begin{proof}
	Let $(k,x)$ be an element of $Mc$. By Proposition~\ref{prop:sub-psh-dlplus}, $\orb_{k,x}$ is completely join-prime, and so, applying Lemma~\ref{lem:adjoint-type}, we have 
	\[ \beta_c(\orb_{k,x}) = \bigwedge \{ \phi \in Dc \ | \ \orb_{k,x} \leq \alpha_c \phi \} \ ,\] 
	where the infimum is taken in $D^\delta c$, and this element is completely join-prime in $D^\delta c$.
	By Lemma~\ref{lem:orbit-cjp}, $\orb_{k,x} \leq \alpha_c \phi$ if and only if $x \in (\alpha_c\phi)k$, and the latter is equal to $\alpha_{k,c}(\phi)$ by Construction~\ref{dfn:psh-of-pts}. The desired equality (\ref{eq:beta-orb}) now follows from the definition of $\tp_{M_k,\alpha_k}(x)$.
\end{proof}
\subsection{Plentiful categories of models}
By Proposition~\ref{prop:sub-psh-dlplus}, each presheaf valued point $(M,\alpha) \colon D \to \Sub_{\Set^\Kcat}$ induces a natural transformation $\alpha^\sharp \colon D^\delta \Rightarrow \Sub_{\Set^\Kcat} \circ M\op$.
Using Lemma~\ref{lem:beta-orb}, we can characterise when $\alpha^\sharp$ is pointwise injective and surjective in terms of model-theoretic properties, namely, realisation of types and homogeneity, respectively. In this way, we justify these properties in a purely categorical way. This will also immediately yield a streamlined proof of Theorem~\ref{thm:iso_iff_types_and_homo}, the main result of this section. This result is analogous to \cite[Theorem~40]{coumans_apal} due to Coumans, as we will explain in more depth in Section~\ref{sec:topos}.

\begin{proposition}\label{prop:sharp-mono}
	The following are equivalent:
	\begin{enumerate}
		\item the natural transformation $\alpha^\sharp \colon D^\delta \to \Sub_{\Set^\Kcat} \circ M\op$ is a monomorphism;
		\item for any $c \in \ob \cat$, $\beta_c \colon \Sub_{\Set^\Kcat}(Mc) \to D^\delta c$ is surjective;
		\item for any $c \in \ob \cat$ and $t \in {J}^\infty D^\delta c$, there exist $k \in \ob \Kcat$ and $x \in M_k c$ such that $t = \bigwedge\tp_{M_k,\alpha_k}(x)$.
	\end{enumerate}
\end{proposition}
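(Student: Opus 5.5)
We prove (i)$\iff$(ii) by a general adjunction argument, and (ii)$\iff$(iii) by reducing surjectivity of $\beta_c$ to hitting every completely join-prime element, using Lemma~\ref{lem:beta-orb}.

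For (i)$\iff$(ii), first note that a natural transformation between functors valued in $\DLplus$ is a monomorphism exactly when each component is injective. The forward direction is the only non-trivial one. It can be handled by evaluating at each object $c$ and testing against the functor $D^\delta$ itself, or more simply by reading ``monomorphism'' componentwise, as is standard for functor categories. Each component $\alpha^\sharp_c$ is a complete morphism with left adjoint $\beta_c$. For any adjunction $\beta_c \dashv \alpha^\sharp_c$ between posets, the right adjoint is injective if and only if $\beta_c \circ \alpha^\sharp_c = \id$, if and only if $\beta_c$ is surjective. One direction: if $\beta_c\alpha^\sharp_c = \id$, then $\beta_c$ is surjective. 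Conversely, if $u = \beta_c(s)$, then $\beta_c\alpha^\sharp_c\beta_c(s) = \beta_c(s)$ by the triangle identities, so $\beta_c\alpha^\sharp_c u = u$, and hence $\alpha^\sharp_c$ has a left inverse and is injective. This gives (i)$\iff$(ii).

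For (ii)$\Rightarrow$(iii), let $t \in J^\infty D^\delta c$ and write $t = \beta_c(S)$ for some subpresheaf $S$. By \eqref{eq:decomp-orbit}, $S = \bigvee_{(k,x)\in \el S} \orb_{k,x}$. Since $\beta_c$ is a left adjoint, it preserves joins, so $t = \bigvee_{(k,x)} \beta_c(\orb_{k,x})$. Complete join-primeness of $t$ gives some $(k,x)$ with $t \le \beta_c(\orb_{k,x})$, and each term of the join is $\le t$, so $t = \beta_c(\orb_{k,x})$. Lemma~\ref{lem:beta-orb} then identifies this with $\bigwedge \tp_{M_k,\alpha_k}(x)$.

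For (iii)$\Rightarrow$(ii), Lemma~\ref{lem:beta-orb} shows that every $t \in J^\infty D^\delta c$ is of the form $\beta_c(\orb_{k,x})$. Since $D^\delta c$ is a $DL^+$, each $u \in D^\delta c$ is the join of the completely join-prime elements below it, say $u = \bigvee_i \beta_c(\orb_{k_i,x_i})$. Join preservation gives $u = \beta_c(\bigvee_i \orb_{k_i,x_i})$, so $\beta_c$ is surjective.

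The main subtlety is the last step: join-generation of $D^\delta c$ by its completely join-prime elements uses the choice principle assumed in Subsection~\ref{subsec:can-ext-prelims}, and this should be stated explicitly.
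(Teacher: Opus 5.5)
Your argument is correct and follows the paper's proof essentially step for step: (i)$\iff$(ii) by comparing componentwise injectivity of the right adjoint $\alpha^\sharp_c$ with surjectivity of its left adjoint $\beta_c$, and (ii)$\iff$(iii) via the orbit decomposition (\ref{eq:decomp-orbit}), Lemma~\ref{lem:beta-orb}, and join-generation of $D^\delta c$ by its completely join-prime elements (which, as you note, uses choice, as the paper's proof also does implicitly). One small gap to close: for (i)$\Rightarrow$(ii) you assert but never show that injectivity of $\alpha^\sharp_c$ forces $\beta_c\alpha^\sharp_c=\id$, and this follows in one line from the other triangle identity $\alpha^\sharp_c\beta_c\alpha^\sharp_c=\alpha^\sharp_c$.
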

\begin{proof}
	The equivalence of (i) and (ii) follows from the fact that $\alpha^\sharp$ is monomorphic if and only if it is component-wise injective, if and only if the left adjoints $\beta_c$ are all surjective.

	In view of Lemma~\ref{lem:beta-orb}, property (iii) asserts that, for any $c \in \ob\cat$, the restriction of $\beta_c$ to a function $J^\infty \Sub_{\Set^\Kcat}(Mc) \to J^\infty D^\delta c$ is surjective. Since completely join-prime elements join-generate $D^\delta c$ and $\beta_c$ preserves joins, this yields (iii) $\Rightarrow$ (ii). For (ii) $\Rightarrow$ (iii), if $\beta_c$ is surjective, then in particular $J^\infty D^\delta c$ is contained in the image of $\beta_c$. Then, if $t = \beta_c(Y)$ for some subpresheaf $Y \leq Mc$, writing $Y$ as in (\ref{eq:decomp-orbit}) and using that $\beta_c$ is join-preserving yields some element $(k,x)$ of $Y$ such that $t = \beta_c(\orb_{k,x})$, as required.
\end{proof}
\begin{proposition}\label{prop:sharp-epi}
	The following are equivalent:
	\begin{enumerate}
		\item the natural transformation $\alpha^\sharp \colon D^\delta \to \Sub_{\Set^\Kcat} \circ M\op$ is an epimorphism;
		\item for any $c \in \ob \mc{C}$, $\beta_c \colon \Sub_{\Set^\Kcat}(Mc) \to D^\delta c$ is an order embedding;
		\item for any $c \in \ob \cat, k,k' \in \ob \Kcat$, $x \in M_k c$ and $x' \in M_{k'} c$, if 
		\[ \tp_{M_k,\alpha_k}(x) \subseteq \tp_{M_{k'},\alpha_{k'}}(x') \ , \] 
		then there exists a morphism $r \colon k \to k'$ in $\Kcat$ such that $((Mc)r)(x) = x'$.
	\end{enumerate}
\end{proposition}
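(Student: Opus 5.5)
The proof mirrors that of Proposition~\ref{prop:sharp-mono}, with the left adjoints $\beta_c$ again doing the work.

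\textbf{(i) $\iff$ (ii).} An epimorphism in the functor category is a componentwise epimorphism. In the relevant category of lattices with complete morphisms, the epimorphisms should be exactly the surjections. I expect this to be the main subtlety: one must make sure epimorphisms in $\DLplus$ really coincide with surjections. The fallback is to read ``epimorphism'' componentwise as surjectivity, which is what the statement intends.

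Granting this, $\alpha^\sharp_c$ is surjective if and only if $\alpha^\sharp_c \circ \beta_c = \id$. Indeed, if $\alpha^\sharp_c$ is surjective, write $Y = \alpha^\sharp_c(u)$; then $\alpha^\sharp_c\beta_c\alpha^\sharp_c(u) = \alpha^\sharp_c(u)$ by the triangle identity. The condition $\alpha^\sharp_c \circ \beta_c = \id$ holds if and only if the left adjoint $\beta_c$ is an order embedding, by the standard adjunction fact: $\beta_c Y \le \beta_c Y'$ iff $Y \le \alpha^\sharp_c\beta_c Y'$, and this reduces to $Y \le Y'$ for all $Y$ exactly when $\alpha^\sharp_c\beta_c Y' = Y'$.

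\textbf{(ii) $\Rightarrow$ (iii).} Suppose $\tp_{M_k,\alpha_k}(x) \subseteq \tp_{M_{k'},\alpha_{k'}}(x')$. Taking infima in $D^\delta c$ reverses the inclusion, so Lemma~\ref{lem:beta-orb} gives $\beta_c(\orb_{k',x'}) \le \beta_c(\orb_{k,x})$. Since $\beta_c$ is an order embedding, $\orb_{k',x'} \le \orb_{k,x}$. By Remark~\ref{rem:orbit-order}, this yields a morphism $r \colon k \to k'$ with $((Mc)r)(x) = x'$.

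\textbf{(iii) $\Rightarrow$ (ii).} Suppose $\beta_c Y \le \beta_c Y'$ for subpresheaves $Y, Y'$ of $Mc$. By \eqref{eq:decomp-orbit} it suffices to show $\orb_{k',x'} \le Y'$ for each element $(k',x')$ of $Y$. We have $\beta_c(\orb_{k',x'}) \le \beta_c(Y) \le \beta_c(Y') = \bigvee_{(k,x)\in Y'} \beta_c(\orb_{k,x})$. The element $\beta_c(\orb_{k',x'})$ is completely join-prime by Lemma~\ref{lem:beta-orb}, so it lies below some $\beta_c(\orb_{k,x})$ with $(k,x)$ in $Y'$.

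It remains to turn this inequality in $D^\delta c$ back into an inclusion of types. We have $\bigwedge \tp(x') \le \bigwedge \tp(x)$, and for $\phi\in Dc$ with $\phi \in \tp(x)$ this gives $\bigwedge\tp(x') \le \phi$. Since $\tp(x')$ is a filter, compactness of the canonical extension yields $\phi \in \tp(x')$. Hence $\tp(x) \subseteq \tp(x')$. Condition (iii) then gives $r \colon k \to k'$ sending $x$ to $x'$, so $x' \in Y'k'$ because $Y'$ is a subpresheaf. By Lemma~\ref{lem:orbit-cjp}, $\orb_{k',x'} \le Y'$, which completes the argument.
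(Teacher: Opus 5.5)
Your proof is correct and follows the paper's route: Lemma~\ref{lem:beta-orb} and Remark~\ref{rem:orbit-order} turn (iii) into order-reflection of $\beta_c$ on orbits, and the adjunction $\beta_c \dashv \alpha^\sharp_c$ gives (i)$\iff$(ii); you additionally spell out the compactness step and the passage from orbits to arbitrary subpresheaves via (\ref{eq:decomp-orbit}) and complete join-primeness, both of which the paper leaves implicit. One caveat: your expectation that epimorphisms in $\DLplus$ are exactly the surjections is false (the inclusion of the three-element chain into the four-element Boolean algebra is a non-surjective epimorphism, because complete homomorphisms preserve complements, which are unique in a distributive lattice), so ``epimorphism'' in (i) must be read as componentwise surjectivity, which is exactly the reading the paper's own proof adopts.
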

\begin{proof}
	In view of Lemma~\ref{lem:beta-orb}, property (iii) asserts that, for any $c \in \ob\cat$, if $\beta_c(\orb_{k',x'}) \leq \beta_c(\orb_{k,x})$, then $x' \in \orb_{k,x}k'$. By Remark~\ref{rem:orbit-order}, the latter is equivalent to $\orb_{k',x'} \leq \orb_{k,x}$. Thus, property (iii) is equivalent to property (ii). The equivalence of (i) and (ii) follows from the fact that $\alpha^\sharp$ is epimorphic if and only if each $\alpha_c^\sharp$ is surjective, if and only if each $\beta_c$ is an order embedding.
\end{proof}
\begin{definition}\label{df:special}
	\begin{enumerate}
		\item 	We say that $(M,\alpha)$ \emph{strictly realises all prime types} if the equivalent conditions of Propostion~\ref{prop:sharp-mono} hold.
		\item We say that $(M,\alpha)$ is \emph{homogeneous} if the equivalent conditions of Proposition~\ref{prop:sharp-epi} hold.
		\item We say that $(M,\alpha)$ is \emph{plentiful} if it is homogeneous and strictly realises all prime types.
	\end{enumerate}
\end{definition}
We remark on the appropriateness of our terminology in Remark~\ref{rem:terminology_realising_types_and_homogeneous}.  
Proposition~\ref{prop:sharp-mono} and~\ref{prop:sharp-epi} together yield the following theorem.
\begin{theorem}[{cf. \cite[Theorem~40]{coumans_apal}}]\label{thm:iso_iff_types_and_homo}
	Let $(M,\alpha) \colon D \to \Sub_{\Set^\Kcat}$ be a presheaf-valued point of a doctrine $D$.  Then $\alpha^\sharp \colon D^\delta \Rightarrow \Sub_{\Set^\Kcat} \circ M\op$ is a natural isomorphism if and only if $(M,\alpha)$ is plentiful.
\end{theorem}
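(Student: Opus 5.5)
The plan is to reduce the statement to Propositions~\ref{prop:sharp-mono} and~\ref{prop:sharp-epi}, together with the observation that a natural transformation between functors valued in $\DLplus$ is an isomorphism exactly when it is both a monomorphism and an epimorphism in the relevant sense, namely componentwise bijective. By Definition~\ref{df:special}, $(M,\alpha)$ is plentiful if and only if it strictly realises all prime types and is homogeneous. By Propositions~\ref{prop:sharp-mono} and~\ref{prop:sharp-epi}, these two conditions hold exactly when $\alpha^\sharp$ is a monomorphism and an epimorphism, respectively. It therefore remains to check that $\alpha^\sharp$ is a natural isomorphism if and only if it is both.

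The forward direction is immediate: an isomorphism is both mono and epi. For the converse, I would use the component-level reformulations established in the proofs of those propositions. Being monomorphic means each $\alpha^\sharp_c$ is injective, equivalently each $\beta_c$ is surjective. Being epimorphic means each $\alpha^\sharp_c$ is surjective, equivalently each $\beta_c$ is an order embedding.

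Hence each component $\alpha^\sharp_c \colon D^\delta c \to \Sub_{\Set^\Kcat}(Mc)$ is a bijective complete lattice morphism. Since $\alpha^\sharp_c$ is monotone and has a left adjoint $\beta_c$ which is a surjective order embedding, $\beta_c$ is an order isomorphism. Its right adjoint $\alpha^\sharp_c$ is then the inverse of $\beta_c$, so it is an order isomorphism as well, and in particular an isomorphism in $\DLplus$.

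A natural transformation whose components are all isomorphisms is a natural isomorphism, since the inverses are automatically natural. This gives the claim. The only point requiring care, and hence the main obstacle, is making sure that "mono" and "epi" in the two propositions are understood componentwise. This is exactly the reading their proofs use, so I would simply cite them in that form rather than reason about mono and epi abstractly in the functor category.
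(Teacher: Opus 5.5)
Your proposal is correct and is essentially the paper's own argument: the theorem is obtained directly by combining Propositions~\ref{prop:sharp-mono} and~\ref{prop:sharp-epi}, with monomorphism and epimorphism read componentwise as injectivity and surjectivity, exactly as in their proofs. Your extra remarks, that a bijective complete lattice morphism is an isomorphism in $\DLplus$ and that inverses of natural components are automatically natural, just make explicit the step the paper leaves implicit.
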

\begin{remark}\label{rem:terminology_realising_types_and_homogeneous}
	\begin{enumerate}
		\item Let $\theory$ be a coherent theory, and let $\tau$ be a prime $X$-type (without parameters). Recall from Definition~\ref{df:saturated} we say that $\tau$ is realised by a tuple $\vec{m} \in \mc{M}^{|X|}$, in a model $\mc{M} \models \theory$, if $\mc{M} \models \phi(\vec{m})$ for each $\phi \in \tau$, or in other words $\tau \subseteq \tp_{\mc{M}}(\vec{m})$.  In Definition~\ref{df:special}, to say that $\vec{m}$ \emph{strictly} realises $\tau$, we instead ask for an \emph{equality} $\tau = \tp_{\mc{M}}(\vec{m})$.
		\item Our terminology of `homogeneous' agrees with (one of) the standard use(s) of the term in model theory when in the following special case.  Let $\theory$ be a classical theory and consider the automorphism group $\Aut(\mc{M})$ of a single model $\mc{M} \vDash \theory$, which can be viewed as a presheaf of points 
		$\Aut(\mc M) \to \Pt(D_\theory)$ 
		in the obvious way.  Recall that $\mc{M}$ is called \emph{homogeneous} if every partial isomorphism $\vec{m} \cong \vec{m}'$ between finite subsets $\vec{m}, \vec{m}' \subseteq \mc{M}$ admits an extension to a total isomorphism of $\mc{M}$, as in the diagram
		\[
		\begin{tikzcd}
			\vec{m} \ar{r}{\sim} \ar[hook]{d} & \vec{m}' \ar[hook]{d} \\
			M \ar[dashed]{r}{\sim} & M.
		\end{tikzcd}
		\]
		There is a partial isomorphism $\vec{m} \cong \vec{m}'$ if and only if $\tp_{\mc{M}}(\vec{m}) = \tp_{\mc{M}}(\vec{m}')$.  Thus, since the types of a classical theory are discretely ordered, $\Aut(\mc{M})$ is homogeneous as in Definition~\ref{df:special} if and only if $\mc{M}$ is homogeneous in the above sense.
	\end{enumerate}
\end{remark}
\begin{remark}
	It follows that if $(M,\alpha) \colon D \to \Sub_{[\Kcat,\Set]}$ is plentiful, then it is also extensible in the sense of Definition~\ref{dfn:extensible} (cf.\ \cite[Lemma 39]{coumans_apal}).  In \cite[\S 6]{coumans_apal}, Coumans's version of the statement of Theorem~\ref{thm:iso_iff_types_and_homo} asked for the further condition that each evaluation $(M_k,\alpha_k)$ is extensible (equivalently, if $D$ is taken as the syntactic doctrine of a coherent theory $\theory$, each evaluation is $\omega$-saturated).  We have observed that this assumption is not necessary; the extensiblity of each $(M_k,\alpha_k)$ follows automatically from the other two conditions.
\end{remark}
\begin{proposition}
	Let $(M,\alpha) \colon D \to \Sub_{\Set^\Kcat}$ be a plentiful presheaf-valued point of $D$.  Then for each object $k \in \Kcat$, the associated point $(M_k, \alpha_k) \colon D \to \power$ (see Construction~\ref{dfn:psh-of-pts}) is extensible.
	
	In particular, if $\Kcat \subseteq \Mod(\theory)$ is a subcategory of models for a coherent theory $\theory$ that strictly realises all prime types and is homogeneous, then each model in $\Kcat$ is $\omega$-saturated.
\end{proposition}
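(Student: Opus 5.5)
We prove extensibility of $(M_k,\alpha_k)$ in two steps. First we show that $(M,\alpha)$ itself is extensible, i.e.\ that $(M,\alpha^\sharp)$ is a morphism of $DL^+$-doctrines. Then we evaluate at $k$. The ``in particular'' clause then follows from Theorem~\ref{thm:saturated} together with Remark~\ref{rem:points-as-models}.

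\emph{Step 1: $(M,\alpha)$ is extensible.} By Theorem~\ref{thm:iso_iff_types_and_homo}, plentifulness means $\alpha^\sharp \colon D^\delta \Rightarrow \Sub_{\Set^\Kcat}\circ M\op$ is a natural isomorphism. Moreover $\Sub_{\Set^\Kcat}$ is a coherent doctrine (Example~\ref{exa:subdoc}) and a $DL^+$-doctrine (Proposition~\ref{prop:sub-psh-dlplus}). Remark~\ref{rem:nat-iso-is-doc-morph} then applies directly to $D^\delta$ and $\Sub_{\Set^\Kcat}$: since $M$ preserves finite limits, the pair $(M,\alpha^\sharp)$ is a morphism of coherent doctrines. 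Its components are complete morphisms, being liftings, so it is a morphism in $\CohDocplus$. This is condition (ii) of Theorem~\ref{thm:extensible}.

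\emph{Step 2: evaluation at $k$.} Let $\mathrm{ev}_k$ send a subpresheaf $Y\leq X$ to $Yk\subseteq Xk$. This gives a natural transformation $\Sub_{\Set^\Kcat}\Rightarrow \Powplus\circ \mathrm{ev}_k\op$, and the evaluation functor $\mathrm{ev}_k \colon \Set^\Kcat\to\Set$ preserves finite limits. Each component of $\mathrm{ev}_k$ preserves arbitrary joins and meets, because these are computed objectwise (Subsection~\ref{subsec:subpsh}). It also commutes with existential quantification, because images in $\Set^\Kcat$ are computed pointwise. Hence $(\mathrm{ev}_k,\mathrm{ev}_k)$ is a morphism of $DL^+$-doctrines $\Sub_{\Set^\Kcat}\to\Powplus$. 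Composing it with $(M,\alpha^\sharp)$ gives a morphism $D^\delta\to\Powplus$ in $\CohDocplus$. By Construction~\ref{dfn:psh-of-pts}, its restriction to $D$ is exactly $(M_k,\alpha_k)$. Condition (i) of Theorem~\ref{thm:extensible} therefore holds, and $(M_k,\alpha_k)$ is extensible.

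The only point requiring care is the claim that $\exists$ in $\Sub_{\Set^\Kcat}$ is given objectwise by direct image. This holds because the image of a subpresheaf under a natural transformation is computed at each object separately. All other steps are bookkeeping with results already established, so I expect no real obstacle.
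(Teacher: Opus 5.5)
Your proposal is correct and follows the paper's proof. Plentifulness makes $\alpha^\sharp$ a natural isomorphism by Theorem~\ref{thm:iso_iff_types_and_homo}, so $(M,\alpha^\sharp)$ is a morphism of $DL^+$-doctrines by Remark~\ref{rem:nat-iso-is-doc-morph}; composing with the evaluation morphism $\mathrm{ev}_k$, which is complete because meets, joins and images are computed pointwise, gives an extension of $(M_k,\alpha_k)$, and your derivation of the ``in particular'' clause via Theorem~\ref{thm:saturated} spells out a step the paper leaves implicit.
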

\begin{proof}
	Let $\ev_k \colon \Sub_{\Set^\Kcat} \to \power$ be the morphism of doctrines that evaluates subpresheaves at the object $k \in \Kcat$, i.e., the underlying functor $\Set^\Kcat \to \Set$ of $\ev_k$ sends a presheaf $X$ to the set $Xk$, and a subpresheaf $Y \subseteq X$ to the subset $Yk \subseteq Xk$.  Thus, $\ev_k \circ (M,\alpha) = (M_k, \alpha_k)$.  Since arbitrary meets, joins and images are computed pointwise in $\Set^\Kcat$, it follows that $\ev_k \colon \Sub_{\Set^\Kcat} \to \power$ is a complete morphism of $DL^+$-doctrines.
	
	As $\alpha^\sharp$ is a natural isomorphism, by Remark~\ref{rem:nat-iso-is-doc-morph}, $(M,\alpha^\sharp) \colon D^\delta \to \Sub_{\Set^\Kcat}$ is a morphism of $DL^+$ doctrines.  By taking the composite $\ev_k \circ (M,\alpha^\sharp)$, we obtain our desired extension of $(M_k,\alpha_k)$.
\end{proof}
\begin{example}[Decidable objects]\label{ex:decobj}
	Consider the theory $\dObj$ of \emph{decidable objects}, i.e., the theory with one binary relation symbol $\neq$ and the axioms
	\[
	(x = y) \land (x \neq y) \vdash_{x,y} \bot \quad \text{and} \quad \top \vdash_{x,y} (x=y) \lor (x \neq y).
	\]
	The category of models of this theory is $\Setinj$, the category of sets and injections.  
	We will show in this example that there are (full) subcategories $\Kcat \subseteq \Setinj$ that strictly realise all prime types but are not homogeneous, that are homogeneous but do not strictly realise all prime types, and conclude with a complete characterisation of the plentiful categories of models for $\dObj$.
	
	Let us first describe the prime $X$-types of $\dObj$, i.e., the prime filters of $D_\theory(X)$. 
	For each natural number $n$, we use the shorthand $E_n$ to denote the regular formula expressing that there are at least $n$ elements in the model, 
	\[ E_n := \exists y_1 . \dots . \exists y_n \, \left(\bigwedge_{{k,\ell \leq n}, \, {k \neq \ell}} y_k \neq y_\ell \right) \ .\]
	Using \cite[Lemma D1.3.8]{elephant} and the axioms of $\dObj$, it follows that every coherent formula in a context $X$ is equivalent, modulo the theory $\dObj$, to a disjunction of formulae of the form
	\[
	\left(\bigwedge_{(x_i,x_{i'}) \in I} x_i = x_{i'}\right) \land \left(\bigwedge_{(x_j,x_{j'}) \in J} x_j = x_{j'}\right) \land E_n \ ,
	\]
	for some $n \in \N$ and subsets $I , J \subseteq X^2$ (where $I,J$ can moreover be chosen to be disjoint).
	For any equivalence relation ${\sim}$ on $X$, we write
	\[ S_{\sim} := \left(\bigwedge_{x, y \in X, x \sim y} x = y \right)\land \left(\bigwedge_{x, y \in X, x \not\sim y} x \neq y\right) \ . \]
	The axioms of the theory $\dObj$ imply that a prime $X$-type contains exactly one of $(x = y)$ and $(x \neq y)$, for each $x,y \in X$.
	Thus, a prime type $\tau$ contains the formula $S_{\sim}$ for some equivalence relation ${\sim}$ of the context $X$.   
	By our above analysis of the coherent formulae of $\dObj$, it follows that $\tau$ can be of one of the two species:
	\[
	\tau_{(\sim,n)} = \; \uparrow\! \left(\textstyle S_{\sim} \land E_n \right)  \quad
	\text{or}\quad \tau_{(\sim,\infty)} = {\textstyle\bigcup_{n \in \N}} \uparrow\! \left(S_{\sim} \land E_n \right) \ ,
	\]
	where, for the former species, we require that $n $ is greater than or equal to the number of equivalence classes of $\sim$.  
	That is to say, a prime $X$-type of $\dObj$ expresses which variables in $X$ are equal, which variables are not, and expresses that the underlying cardinality of the model has at least $n$ elements or is infinite. Note that there is an inclusion of types $\tau_{(\sim,a)} \subseteq \tau_{(\sim',b)}$ if and only if the equivalence relations ${\sim}$ and ${\sim'}$ are identical and $a \leq b$, where $a,b  $ denote elements of $ \mathbb{N} \cup \{\infty\}$ with the usual ordering.

	Therefore, for a subcategory of models $\Kcat \subseteq \Setinj$ to \emph{strictly} realise all prime types, $\Kcat$ must contain a finite set of cardinality $n$, for each $n \in \N$, and at least one infinite set.
	
	Now let us consider homogeneity.  Let $Y ,Y'\in \Kcat \subseteq \Setinj$ be sets that are included in the chosen subcategory $\Kcat$, and suppose that the type of some tuple $(y_1 , \dots , y_n) \in  Y^n$ is included in the type of $(y'_1, \dots , y'_n) \in {Y'}^n$, which, as we have just seen, entails that, for any $1 \leq i, j \leq n$,
	\begin{enumerate}
		\item $y_i = y_j$ if, and only if, $y'_i = y'_j$, and
		\item either $Y$ is finite and $|Y'| \geq |Y|$, or both $Y$ and $Y'$ are infinite.
	\end{enumerate}
	If $|Y| \leq |Y'|$, then there exists an injection $Y \hookrightarrow Y'$ that sends $y_i$ to $y_i'$ for each $1 \leq i \leq n$.
	However, if $|Y| > |Y'|$, then there can be no injection $Y \hookrightarrow Y'$.  Thus, as soon as $\Kcat$ contains two infinite sets of differing cardinality, $\Kcat$ is not homogeneous.
	
	Let $\Setinj^{\leq\kappa}$ (respectively, $\Setinj^{< \kappa}$) denote the full subcategory of $\Setinj$ consisting of all sets of cardinality less than or equal to $\kappa$ (resp., strictly less than $\kappa$).  By our above analysis, we conclude the following: the subcategory $\Setinj^{<\omega}$ of finite sets is a homogeneous category of models for $\dObj$, but does not realise all prime types; $\Setinj^{\leq 2^\omega}$ realises all types but is not homogeneous; the category $\Setinj^{\leq \omega}$ of all countable sets is, by contrast, homogeneous and realises all types.  Indeed, a full subcategory $\Kcat \subseteq \Setinj$ is plentiful if and only if $\Kcat$ contains sets of all finite cardinalities and sets of exactly one infinite cardinality.
\end{example}
\begin{example}[Reconstructing $\omega$-categorical theories]\label{ex:omega-categorical}		
	Recall that a classical (single-sorted) theory $\theory$ is said to be $\omega$-categorical if one of the following equivalent conditions is satisfied:
	\begin{enumerate}
		\item\label{enum:omegacat:finite_types} for each natural number $n$, there are finitely many $n$-types;
		\item\label{enum:omegacat:finite_forms} for each $n$, there are finitely many formulae with $n$ free variables up to $\theory$-provable equivalence, or equivalently the associated doctrine $D_\theory$ factors through finite Boolean algebras;
		\item\label{enum:omegacat:isolated} each type is \emph{isolated}, i.e., each ultrafilter of $D_\theory(n)$ is principal;
		\item\label{enum:omegacat:omegacat} the theory $\theory$ has, up to isomorphism, a unique countable model.
	\end{enumerate}
	(Of course, the equivalence of conditions \ref{enum:omegacat:finite_types}, \ref{enum:omegacat:finite_forms} and \ref{enum:omegacat:isolated} is a direct consequence of Stone duality for Boolean algebras, while it is the final condition \ref{enum:omegacat:omegacat} that gives $\omega$-categorical theories their name.)	
	There is a well-established sense in which an $\omega$-\emph{categorical} (classical) theory can be reconstructed from the automorphism group of any countable model \cite[\S 1]{ahlbrandtziegler}, and more generally from any homogeneous model \cite{caramello_galois_theory}.
	
	The salient steps of the assertion that an $\omega$-categorical theory can be reconstructed from the automorphism group of homogeneous model can essentially be deduced from Theorem~\ref{thm:iso_iff_types_and_homo}.  Given a classical, $\omega$-coherent theory $\theory$, since $D_\theory$ factors through finite Boolean algebras, we have that $D_\theory = D_\theory^\delta$, and as $\theory$ is a complete theory with isolated types, any model (strictly) realises all types.  Thus, given a model $\mc{M} \vDash \theory$, %
	viewed as a functor $M \colon \Aut(\mc M) \to \Pt(D_\theory)$, %
	the associated interpretation transformation $\class{-}_M \colon D_\theory \Rightarrow \Sub_{\Aut(\mc{M})} \circ M\op$ is a natural isomorphism if and only if $\mc{M}$ is homogeneous.  In particular, the unique countable model of an $\omega$-categorical theory is homogeneous (\cite[\S 10]{hodges}), and so $\theory$ can be recovered from the automorphism group of its (unique) countable model (cf.\ \cite[\S 1]{ahlbrandtziegler}).  We mention, for interest, that recent work  also explores homogeneity in the context of endomorphism monoids of models \cite{rogers_endomorphism_monoids}.
\end{example}
\section{Recovering the canonical extension for \texorpdfstring{$\omega$}{omega}-stable theories}\label{sec:reconstr}

In this final section, as an application of Theorem~\ref{thm:iso_iff_types_and_homo}, we will show that the canonical extension $D_\theory^\delta$ of the syntactic doctrine for an $\omega$-\emph{stable} theory can be recovered from a subcategory of models of $\theory$ equipped only with knowledge of the underlying sets of these models (but not, for instance, knowledge of the definable subsets, etc.).  To express ``knowledge of the underlying sets'', we employ the following formalism.
\begin{definition}
	Let $\mc C$ and $\mc D$ be categories equipped with functors $A \colon \mc C \to \mc E$ and $B \colon \mc D \to \mc E$.  We say that $\mc C$ and $\mc D$ are \emph{equivalent over} $\mc E$, and write $\mc{C} \simeq_{\mc{E}} \mc{D}$, if there are functors $F : \mc{C} \leftrightarrows \mc{D} : G$ witnessing an equivalence $\mc{C} \simeq \mc{D}$ which moreover commute with the functors $A$ and $B$, as in the diagram:
	\[\begin{tikzcd}[column sep=tiny]
		{\mc{C}} \ar[shift left=1.5]{rr}{F} \ar[bend right]{rd}[']{A}  && {\mc{D}} \ar[shift left=1.5]{ll}{G} \ar[bend left]{ld}{B} \\
		& {\mc{E}}.
	\end{tikzcd}\]
	In other words, $A$ and $B$ are equivalent as objects of the slice category $\mathbf{CAT}/\mc{E}$.
\end{definition}

In our setting, $\mc{C} \subseteq \Str{\Sigma}$ and $\mc{D} \subseteq \Str{\Sigma'}$ will be subcategories of structures, where $\Sigma, \Sigma'$ are a pair of single-sorted signatures, and so are both equipped with forgetful functors $U \colon \mc{C} \to \Set, U' \colon \mc{D} \to \Set$ that send a structure to its underlying set.  Thus, $\mc{C}, \mc{D}$ are equivalent over $\Set$ if, in addition to there being an equivalence $\mc{C} \simeq \mc{D}$, this equivalence preserves the underlying set of each structure.

We are motivated to consider equivalences of categories over a third category since it is precisely the notion of equivalence that arises when considering isomorphisms of doctrines.
\begin{lemma}\label{lem:iso_of_docs_is_equiv_over}
	Let $D, D' \colon \mc{C}\op \to \DL$ be coherent doctrines (respectively, $DL^+$-doctrines) over the same base category.  If $D \cong D'$, then $\Pt(D) \simeq_{\bf{Lex}[\mc{C},\Set]} \Pt(D')$ (resp., $\Ptc(D) \simeq_{\bf{Lex}[\mc{C},\Set]} \Ptc(D')$).
\end{lemma}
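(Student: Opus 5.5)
The plan is to exhibit the equivalence as precomposition with the isomorphism. Fix a natural isomorphism $\theta \colon D \Rightarrow D'$ over the same base, i.e.\ the doctrine isomorphism is $(\id_{\mc C},\theta)$; by Remark~\ref{rem:nat-iso-is-doc-morph} both $(\id_{\mc C},\theta)$ and $(\id_{\mc C},\theta^{-1})$ are morphisms of coherent doctrines. If $D$ and $D'$ are $DL^+$-doctrines, each $\theta_c$ is a lattice isomorphism between complete lattices, hence preserves all joins and meets, so these are morphisms in $\CohDocplus$. (If "$D\cong D'$" is read as an isomorphism whose base functor is a non-identity automorphism of $\mc C$, I would restrict to the identity-on-base case, since only then does the slice over $\mathbf{Lex}[\mc C,\Set]$ make sense without twisting.)

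Define $F \colon \Pt(D') \to \Pt(D)$ by $F(M,\alpha) := (M,\alpha)\circ(\id_{\mc C},\theta) = (M,\alpha\theta)$, where $(\alpha\theta)_c = \alpha_c\circ\theta_c$. Composites of doctrine morphisms are doctrine morphisms: naturality is clear, and the Beck--Chevalley squares paste, since $\alpha_{c'}\theta_{c'}\exists^D_f = \alpha_{c'}\exists^{D'}_f\theta_c = \exists^{\Pow}_{Mf}\alpha_c\theta_c$. On transformations, let $F$ act as the identity: if $\sigma \colon M\Rightarrow N$ satisfies $\alpha_c(\psi)\le \Pow(\sigma_c)\beta_c(\psi)$ for all $\psi\in D'c$, then taking $\psi=\theta_c\phi$ gives the corresponding condition for $(M,\alpha\theta)$ and $(N,\beta\theta)$. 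Define $G$ symmetrically using $\theta^{-1}$. Then $GF$ and $FG$ are literally the identity functors, since $\alpha\theta\theta^{-1}=\alpha$ and nothing happens on arrows. So this is in fact an isomorphism of categories.

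Compatibility with the forgetful functors to $\mathbf{Lex}[\mc C,\Set]$ is immediate. These functors send $(M,\alpha)$ to $M$ (finite-limit preserving as a functor out of $\mc C$) and a transformation $\sigma$ to itself, and $F$ and $G$ do not change $M$ or $\sigma$. The $DL^+$ case is verbatim the same, now composing in $\CohDocplus$ with the complete morphisms $\theta_c^{\pm1}$.

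The only point needing care, rather than a real obstacle, is the well-definedness of $F$ on transformations. In particular, the inequality for $D$ must follow from the one for $D'$ and conversely. This holds because $\theta_c$ is a bijection, so the quantification over $\phi\in Dc$ and over $\psi\in D'c$ range over the same data.
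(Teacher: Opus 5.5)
Your proposal is correct and follows the same approach as the paper: both transport points along the natural isomorphism by precomposition, $(M,\alpha)\mapsto(M,\alpha\circ\theta)$, with the inverse given by $\theta^{-1}$, and leave $M$ and the transformations untouched, so the equivalence lies strictly over $\mathbf{Lex}[\mc{C},\Set]$. You also check the Beck--Chevalley squares, the action on transformations and the $DL^+$ case, which the paper leaves implicit, and your direction of composition is the type-correct one (the paper's $\alpha\circ\beta$ for $(M,\alpha)\in\Pt(D)$ should read $\alpha\circ\beta^{-1}$).
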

\begin{proof}
	The category of points $\Pt(D)$ of $D$ lives over the category $\bf{Lex}[\mc{C},\Set]$ by sending a morphism $(M,\alpha) \colon D \to \power$ to the functor $M \colon \mc{C} \to \Set$.  Using $\beta \colon D \Rightarrow D'$ to denote the natural isomorphism, the equivalence $\Pt(D) \simeq_{\bf{Lex}[\mc{C},\Set]} \Pt(D')$ is witnessed by sending a morphism $(M,\alpha) \in \Pt(D) $ to $(M,\alpha \circ \beta) \in \Pt(D')$, and in reverse sending $(M,\gamma) \in \Pt(D')$ to $(M,\gamma \circ \beta^{-1}) \in \Pt(D)$. 
\end{proof}
\begin{definition}
	Recall that a model $\mc M$ is \emph{$\omega$-saturated} if, for any finite $X,Y$, every consistent $Y$-type with $X$-parameters in $\mc M$ is realised.  We say that $\mc M$ is \emph{countable and saturated} if the underlying set of $\mc M$ is countable and $\mc M$ is $\omega$-saturated.  We use $\omegaSat(\theory)$ to denote the category of countable, saturated models of $\theory$ and model homomorphisms between these.
\end{definition}
\begin{lemma}[cf.\ {\cite[Lemma~10.1.3]{hodges}}]\label{lem:countsat_are_homo}
	Let $\mc{M}$ and $\mc{M}'$ be models of a coherent theory $\theory$ with $\vec{m} \in M^n$ and $\vec{m}' \in {M'}^n$ such that $\tp_{\mc M}(\vec{m}) \subseteq \tp_{\mc M'}(\vec{m}')$.  Suppose also that $\mc{M}$ is countable and $\mc{M}'$ is $\omega$-saturated.  Then there exists a homomorphism of models $f \colon \mc{M} \to \mc{M}'$ sending $\vec{m}$ pointwise to $\vec{m}'$.
\end{lemma}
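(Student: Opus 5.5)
We will build $f$ by a forth-only (Tarski–Vaught style) construction along an enumeration of $M$, as in the classical proof of \cite[Lemma~10.1.3]{hodges}. Since $\mc M$ is countable, we fix an enumeration $a_0, a_1, a_2, \dots$ of $M$; if $M$ is finite, the enumeration is finite and the construction terminates. We construct inductively tuples $\vec{b}_k \in {M'}^{k}$, with $\vec b_{k+1}$ extending $\vec b_k$, maintaining the invariant
\[ \tp_{\mc M}(\vec m, a_0,\dots,a_{k-1}) \subseteq \tp_{\mc M'}(\vec m', \vec b_k) . \]
The base case $k=0$ is exactly the hypothesis $\tp_{\mc M}(\vec m)\subseteq\tp_{\mc M'}(\vec m')$.

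For the inductive step, write $\vec c = (\vec m, a_0,\dots,a_{k-1})$ and $\vec c' = (\vec m',\vec b_k)$, with variables $X$ of the appropriate length, and let $Y=\{y\}$ be a single fresh variable. We consider the $Y$-type with $X$-parameters $(\tau, \vec c')$ in $\mc M'$, where $\tau := \tp_{\mc M}(\vec c, a_k)$ is viewed as a set of formulae in $X\cup Y$.

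We first check consistency. Given $\phi(x,y)\in\tau$, we have $\mc M\models \exists y.\,\phi(\vec c,y)$, so $\exists y.\,\phi\in\tp_{\mc M}(\vec c)\subseteq\tp_{\mc M'}(\vec c')$. Hence there is $n\in M'$ with $\mc M'\models\phi(\vec c',n)$. Since $\mc M'$ is $\omega$-saturated (Definition~\ref{df:saturated}), the type is realised by some $b_k\in M'$, and we set $\vec b_{k+1}:=(\vec b_k,b_k)$. The invariant then holds by construction. Note that we only need the inclusion $\subseteq$ rather than equality of types, which matches the positive setting: positive types are only preserved, not reflected, by homomorphisms.

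Finally, we define $f(a_k):=b_k$ and check the required properties.

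\emph{Well-definedness.} Suppose an element of $\vec m$ coincides with some $a_k$, or the enumeration were to repeat. Then the formula $x_i=x_j$ lies in the type of $\vec c$ on the $\mc M$ side, hence in the type of $\vec c'$ on the $\mc M'$ side. So the assigned values agree, and in particular $f$ sends $\vec m$ to $\vec m'$ pointwise.

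\emph{Homomorphism.} Because the signature is relational, it suffices that $f$ preserves each atomic relation. A tuple in $M$ satisfying an atomic $R$ lies among finitely many $a_i$, so $R$ is in the type of some initial segment, and therefore holds of the image tuple by the invariant.

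The main point requiring care is the consistency check in the inductive step. It depends on the fact that the parameters $\vec c'$ are exactly a realisation of a type containing $\tp_{\mc M}(\vec c)$, together with closure of types under $\exists$. The rest of the argument is routine bookkeeping.
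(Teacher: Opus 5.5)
Your proposal is correct and follows essentially the same forth-only construction as the paper: enumerate $M$, extend a partial map preserving type inclusion one element at a time, get consistency of the next type from $\exists y.\,\phi \in \tp_{\mc M}(\vec c)\subseteq\tp_{\mc M'}(\vec c')$, realise it by $\omega$-saturation, and take the union. The only difference is cosmetic. You carry $\vec m$ as a fixed prefix of parameters and handle coincidences through equality formulae, whereas the paper places $\vec m$ at the start of the enumeration.
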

\begin{proof}
	Since $M$ is countable, fix an enumeration of $M = (m_1, m_2, \dots )$ where the initial segment $(m_1, \dots , m_n)$ is the given tuple $\vec{m} \in M^n$.  We will construct the homomorphism $f \colon \mc{M} \to \mc{M}'$ inductively by building partial structure homomorphisms $f_k \colon \mc{M} \rightharpoondown \mc{M}'$ whose domain at each stage is given by $(m_1, \dots , m_k)$.  As a base case, take $f_n \colon  \mc{M} \rightharpoondown \mc{M}'$ to be the partial structure homomorphism that sends $\vec{m} = (m_1, \dots , m_n)$ pointwise to $\vec{m}' = (m'_1, \dots , m'_n) \subseteq M'$.  This is indeed a partial structure homomorphism since $\tp(\vec{m}) \subseteq \tp(\vec{m}')$.
	
	Suppose we have built $f_k \colon (m_1, \dots , m_k) \to \mc{M}'$ for $k \geq n$.  We obtain $f_{k+1}$ as follows.  Firstly, we set $f_{k+1}(m_i) = f_k(m_i)$ for $i \leq k$.  Turning to $f_{k+1}(m_{k+1})$, let $t(x_1, \dots , x_k, x_{k+1})$ denote the type of $(m_1, \dots , m_{k+1}) \subseteq M^{k+1}$.  The type with (finitely many) parameters $t(f_k(m_1) , \dots , f_k(m_k), x_{k+1})$ is consistent with $\mc{M}'$ since, for each $\phi \in t$, we have that $\exists x_{k+1} \, \phi \in \tp(m_1, \dots , m_k)$, from which we deduce that $\mc{M} \vDash \exists x_{k+1} \,\phi(m_1, \dots , m_k)$ and so $\mc{M}' \vDash \exists x_{k+1} \, \phi(f_k(m_1), \dots, f_k(m_k))$ by the fact that $f_k$ is a structure homomorphism.  Hence, since $\mc{M}'$ is $\omega$-saturated, there is some $m'_{k+1} \in M'$ that realises $t(f_k(m_1) , \dots , f_k(m_k), x_{k+1})$, and we set $f_{k+1}(m_{k+1})$ as $m'_{k+1}$.  The function $f_{k+1} \colon (m_1, \dots , m_{k+1}) \to \mc{M}'$ is a partial structure homomorphism since, by design, $\tp(m_1, \dots, m_{k+1}) \subseteq \tp(f(m_1), \dots , f(m_{k+1}))$. 
	
	The structure morphism $f \colon \mc{M} \to \mc{M}'$ is defined as the union $\bigcup_{k \leq \omega} f_k$, which is total since $(m_1 , m_2, \dots)$ enumerated the whole of $M$.  This is clearly a structure homomorphism, i.e., if $\mc{M}\vDash \phi(\vec{m}'')$ then $\mc{M}' \vDash \phi(f(\vec{m}''))$, since we can always restrict to the partial structure homomorphism $f_k$ for some $k$ large enough that each $m'' \in \vec{m}''$ is contained in $(m_1, \dots , m_k)$.
\end{proof}
\begin{corollary}\label{coro:omegasat_is_homo}
	The category $\omegaSat(\theory)$ of countable, saturated models of $\theory$ is a homogeneous category of models in the sense of Definition~\ref{df:special}.
\end{corollary}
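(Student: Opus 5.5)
We verify condition (iii) of Proposition~\ref{prop:sharp-epi} for the presheaf-valued point obtained from the inclusion $\Kcat := \omegaSat(\theory) \hookrightarrow \Mod(\theory) \cong \Pt(D_\theory)$. Construction~\ref{dfn:psh-of-pts} turns this inclusion into a presheaf-valued point $(M,\alpha) \colon D_\theory \to \Sub_{\Set^\Kcat}$ (using the bijective correspondence of Remark~\ref{rem:justify-psh-of-points}). Here $D_\theory$ has base category $\Finset\op$. For a finite set $X$ and a model $\mc M$ in $\Kcat$, the evaluation $M_{\mc M}X$ is the set $M^X$ of $X$-tuples, $\alpha_{\mc M}$ is interpretation in $\mc M$, and a homomorphism $f \colon \mc M \to \mc M'$ acts on $M^X$ by postcomposition. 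Consequently $\tp_{M_{\mc M},\alpha_{\mc M}}(\vec m)$ is exactly the positive type $\tp_{\mc M}(\vec m)$ from Remark~\ref{rem:types-primes}.

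Under this translation, condition (iii) reads as follows. For every finite $X$, all $\mc M, \mc M' \in \omegaSat(\theory)$, and all tuples $\vec m \in M^X$ and $\vec m' \in {M'}^X$ with $\tp_{\mc M}(\vec m) \subseteq \tp_{\mc M'}(\vec m')$, there must be a homomorphism $r \colon \mc M \to \mc M'$ with $r \circ \vec m = \vec m'$. We obtain it directly from Lemma~\ref{lem:countsat_are_homo}: $\mc M$ is countable and $\mc M'$ is $\omega$-saturated, since both lie in $\omegaSat(\theory)$. The lemma produces exactly such an $r$, and $r$ is a morphism of $\Kcat$ because $\omegaSat(\theory)$ contains all homomorphisms between its objects.

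The one point needing care is that Lemma~\ref{lem:countsat_are_homo} is stated for $n$-tuples, whereas contexts here are arbitrary finite sets $X$, possibly with repeated entries. Identifying $X \cong \{1,\dots,n\}$ settles this. Repetitions are harmless: the lemma's enumeration argument still works if we begin the enumeration with the distinct entries of $\vec m$. Equalities among the entries of $\vec m$ are recorded in $\tp_{\mc M}(\vec m)$, so by the type inclusion they also hold among the corresponding entries of $\vec m'$. Hence the initial assignment $\vec m \mapsto \vec m'$ is well defined. This bookkeeping is the only real obstacle; once it is done, Proposition~\ref{prop:sharp-epi} shows that $(M,\alpha)$ is homogeneous in the sense of Definition~\ref{df:special}.
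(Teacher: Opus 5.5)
Your proposal is correct and follows the route the paper intends. The corollary is stated without proof immediately after Lemma~\ref{lem:countsat_are_homo} because condition (iii) of Proposition~\ref{prop:sharp-epi}, for the inclusion $\omegaSat(\theory) \hookrightarrow \Mod(\theory)$, is exactly that lemma applied to a countable $\mc M$ and an $\omega$-saturated $\mc M'$. Your remark on repeated entries of $\vec m$ (the initial partial map is well defined because the equalities among entries are recorded in the type) is a correct refinement that the paper's proof of the lemma leaves implicit.
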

\begin{theorem}\label{thm:if_enough_omegasat_then_reconstr}
	Let $\theory$ and $\theory'$ be coherent theories.  If every prime type of $\theory$ (respectively, $\theory'$) is strictly realised in a countable, saturated model, then the canonical extensions of their associated syntactic doctrines are isomorphic if and only if their countable, saturated models are equivalent over $\Set$, i.e.,
	\[
	D_\theory^\delta \cong D_{\theory'}^\delta \iff \omegaSat(\theory) \simeq_\Set \omegaSat(\theory').
	\]
\end{theorem}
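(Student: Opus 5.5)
The plan is to prove the two implications separately. The backward direction runs through Theorem~\ref{thm:iso_iff_types_and_homo}, applied to the presheaf of points given by the countable, saturated models. The forward direction runs through Lemma~\ref{lem:iso_of_docs_is_equiv_over} together with Theorem~\ref{thm:saturated}. Throughout, I read ``$D_\theory^\delta \cong D_{\theory'}^\delta$'' as an isomorphism of doctrines over the common base $\Finset\op$, so that the identity functor is the base part. If base automorphisms were allowed, one would additionally have to observe that such an automorphism fixes the terminal object and hence the underlying set $M(1)$.

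For ($\Leftarrow$), fix an equivalence $F \colon \omegaSat(\theory) \simeq_\Set \omegaSat(\theory')$. Since every countable model is isomorphic to one carried by a subset of $\N$, I first replace $\omegaSat(\theory)$ by a small, equivalent full subcategory $\Kcat$ (models on subsets of $\N$); I should check that this replacement keeps everything over $\Set$. The inclusion $\Kcat \to \Mod(\theory) \cong \Pt(D_\theory)$ is a presheaf of points. By Construction~\ref{dfn:psh-of-pts} and Remark~\ref{rem:justify-psh-of-points} it yields a presheaf-valued point $(M,\alpha) \colon D_\theory \to \Sub_{\Set^\Kcat}$, whose underlying functor is $X \mapsto (k \mapsto (Uk)^X)$, where $U$ is the forgetful functor.
\begin{enumerate}
\item This point is homogeneous by Corollary~\ref{coro:omegasat_is_homo}, which relies on Lemma~\ref{lem:countsat_are_homo}.
\item It strictly realises all prime types by hypothesis: isomorphic copies realise the same types, so passing to the skeleton is harmless. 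Here I use criterion (iii) of Proposition~\ref{prop:sharp-mono}.
\end{enumerate}
Hence, by Theorem~\ref{thm:iso_iff_types_and_homo}, $D_\theory^\delta \cong \Sub_{\Set^\Kcat} \circ M\op$.

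Next I compose the inclusion with $F$ to get a presheaf of points $\Kcat \to \Pt(D_{\theory'})$, with associated point $(M',\alpha')$. Because $F$ commutes with the forgetful functors to $\Set$ on objects and on arrows, the underlying functor satisfies $M' = M$ exactly. The point $(M',\alpha')$ is again plentiful, for two reasons.
\begin{enumerate}
\item Homogeneity transfers along the fully faithful $F$: a homomorphism in $\omegaSat(\theory')$ between objects of the form $Fk$, $Fk'$ is $F$ of a homomorphism $k \to k'$. Alternatively, I can apply Corollary~\ref{coro:omegasat_is_homo} directly, since the $Fk$ are countable and saturated.
\item Strict realisation of prime types transfers because $F$ is essentially surjective onto $\omegaSat(\theory')$ and the hypothesis holds for $\theory'$.
\end{enumerate}
So $D_{\theory'}^\delta \cong \Sub_{\Set^\Kcat} \circ M\op$ as well, and composing the two isomorphisms gives $D_\theory^\delta \cong D_{\theory'}^\delta$.

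For ($\Rightarrow$), Lemma~\ref{lem:iso_of_docs_is_equiv_over} gives $\Ptc(D_\theory^\delta) \simeq \Ptc(D_{\theory'}^\delta)$ over $\mathbf{Lex}[\Finset\op,\Set]$. By Theorem~\ref{thm:extensible}, Theorem~\ref{thm:saturated} and Remark~\ref{rem:points-as-models}, complete points of $D_\theory^\delta$ are the $\omega$-saturated models of $\theory$. I also need transformations between complete points to match model homomorphisms. This holds because a transformation $\sigma$ satisfies the required inequality on all of $D^\delta$ as soon as it does on $D$, since both sides are complete morphisms and $D$ generates $D^\delta$ under joins of meets. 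The equivalence lives over $\mathbf{Lex}[\Finset\op,\Set]$, so it preserves $M$ and in particular the underlying set $M(1)$ with its morphisms. Therefore it restricts to models with countable underlying set and gives $\omegaSat(\theory) \simeq_\Set \omegaSat(\theory')$.

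The main obstacle I expect is the bookkeeping in ($\Leftarrow$): verifying that $M' = M$ strictly as functors $\Finset\op \to \Set^\Kcat$, which is what allows the two isomorphisms to be composed. Smallness of $\Kcat$ and the behaviour of ``equivalent over $\Set$'' under replacement by a skeleton also need care.
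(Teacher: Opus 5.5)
Your proposal is correct and follows the paper's proof: ($\Leftarrow$) via Corollary~\ref{coro:omegasat_is_homo} and Theorem~\ref{thm:iso_iff_types_and_homo}, and ($\Rightarrow$) via Lemma~\ref{lem:iso_of_docs_is_equiv_over} and Theorem~\ref{thm:saturated}. The only difference is that you pull the $\theory'$-family back along $F$ to the same small index category $\Kcat$, so the paper's middle isomorphism $\Sub_{\Set^{\mc K}} \circ U\op \cong \Sub_{\Set^{\mc K'}} \circ {U'}\op$ becomes an identity, and you also spell out the smallness of the index category and the match between transformations of complete points and model homomorphisms, which the paper leaves implicit.
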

\begin{proof}
	Let us write $\mc K := \omegaSat(\theory)$ and $\mc K' := \omegaSat(\theory')$. Suppose first that $\mc K \simeq_\Set \mc K'$.  Then it is easily seen that there is a natural isomorphism $\Sub_{\Set^{\mc K}} \circ U\op \cong \Sub_{\Set^{\mc K'}} \circ {U'}\op$, where $U$ and $U'$ denote the respective forgetful functors $U \colon \mc K \to \Set$, $U' \colon \mc K' \to \Set$.  By assumption, $\mc K$ strictly realises all prime types, and by Corollary~\ref{coro:omegasat_is_homo} it is also homogeneous (and similarly for $\mc K'$).  Thus, by Theorem~\ref{thm:iso_iff_types_and_homo}, there is a chain of natural isomorphism
	\[
	D_\theory^\delta \cong \Sub_{\Set^{\mc K}} \circ U\op \cong \Sub_{\Set^{\mc K'}} \circ {U'}\op \cong D_{\theory'}^\delta.
	\]
	
	Conversely, suppose that $D_\theory^\delta \cong D_{\theory'}^\delta$. By Lemma~\ref{lem:iso_of_docs_is_equiv_over}, we have 
	\[\Ptc(D_\theory) \simeq_{\bf{Lex}[\Finset,\Set]} \Ptc(D_{\theory'}).\]  Since the equivalence $\Ptc(D_\theory) \simeq \Ptc(D_{\theory'})$ occurs over ${\bf{Lex}[\Finset,\Set]}$, we can restrict this equivalence to $\mc{C} \simeq_{\bf{Lex}[\Finset,\Set]} \mc{C}'$, where $\mc{C} \subseteq \Ptc(D_\theory)$ is the full subcategory of those complete points $(M,\alpha)$ where $M$ takes values in countable sets, and similarly for $\mc{C}' \subseteq \Ptc(D_{\theory'})$.
	Now recall, from Theorem~\ref{thm:saturated}, that an object $(M,\alpha) \in \mc{C}$ is just the datum of a countable, $\omega$-saturated model of $\theory$, and moreover this equivalence commutes with the canonical forgetful functors in the sense that there is a commuting square
	\[
	\begin{tikzcd}
		\Ptc(D_\theory) \ar{r}{\sim} \ar{d} & \omegaSat(\theory) \ar{d} \\
		\bf{Lex}[\Finset,\Set] \ar{r}{\sim} & \Set.
	\end{tikzcd}
	\]
	The same analysis holds for $\theory'$, and so from $\mc{C} \simeq_{\bf{Lex}[\Finset,\Set]} \mc{C}'$ we deduce that $\omegaSat(\theory) \simeq_{\Set} \omegaSat(\theory')$ as desired.
\end{proof}
Thus, we obtain a reconstruction result for the canonical extension of the syntactic doctrine associated with any theory that has enough countable, saturated models.  The existence of countable, saturated models is, in general, a hard problem, but a sufficient condition is found in the model-theoretic notion of $\omega$-\emph{stability}, which imposes a bound on the complexity of models as parameters are introduced.
\begin{definition}\label{df:stable}
	A coherent theory $\theory$ is said to be \emph{$\omega$-stable} if, given any model $\mc{M} \vDash \theory$ and a countable subset $A \subseteq M$, there are countably many prime types with parameters in $A$.
\end{definition}
\begin{proposition}[\cite{hodges}, Theorem 10.2.7]\label{prop:stable_iff_countsat}
	If $\theory$ is $\omega$-stable, every prime type of a coherent theory is strictly realised in a countable, saturated model.
\end{proposition}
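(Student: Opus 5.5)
The plan is to follow the classical argument of Hodges (Theorem~10.2.7), transposed to the positive (coherent) setting, in two stages. First we build, for any countable model $\mc{M}$, a countable elementary-in-the-positive-sense extension in which every prime type over a finite subset of $\mc{M}$ that is consistent with $\mc{M}$ is realised. Then we iterate this $\omega$ times to get a countable $\omega$-saturated model. We also have to arrange that a given prime type $\tau$ in context $Y$ is \emph{strictly} realised, i.e.\ $\tp(\vec{m}) = \tau$ and not merely $\tau \subseteq \tp(\vec m)$.

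\textbf{Step 1: the starting model.} Fix a prime $Y$-type $\tau$. By the remark after Definition~\ref{dfn:type} (completeness for coherent logic), there are a model $\mc{N}$ and a tuple $\vec n$ with $\tp_{\mc N}(\vec n) = \tau$. A downward L\"owenheim--Skolem argument, closing a countable subset under witnesses for the countably many coherent formulae, gives a countable submodel $\mc{M}_0 \ni \vec n$. Its inclusion into $\mc N$ must reflect the truth of coherent formulae on its tuples, so that the type of $\vec n$ is still exactly $\tau$. We assume the signature is countable, as is implicit in the definition of $\omega$-stability.

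\textbf{Step 2: one-step realisation.} Given a countable model $\mc{M}_i$, there are only countably many finite parameter tuples $\vec a$ from $M_i$. By $\omega$-stability, countably many prime types in finitely many variables have parameters in $M_i$. For each consistent type $(\sigma,\vec a)$ (Definition~\ref{df:saturated}), extend $\sigma$ to a prime type over $\vec a$ that is still consistent with $\mc{M}_i$. This uses a Zorn/prime-filter argument inside the doctrine $D_\theory(X+Y)$ relative to the type of $\vec a$. Then realise all these countably many prime types simultaneously in a single model extending $\mc{M}_i$ along a map that preserves and reflects the coherent types of tuples from $M_i$. This is the completeness theorem applied to the positive diagram of $\mc{M}_i$ together with the negative information that formulae false in $\mc{M}_i$ stay false, which is expressible in coherent form as sequents $\phi(\vec a)\vdash\bot$ for constants. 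Finally, take a countable submodel by Step~1's L\"owenheim--Skolem argument to obtain $\mc{M}_{i+1}$.

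\textbf{Step 3: the union.} Set $\mc{M}_\omega = \bigcup_i \mc{M}_i$. Because each inclusion reflects types, coherent formulae are preserved along the chain, so $\mc{M}_\omega$ is a countable model and $\tp_{\mc{M}_\omega}(\vec n) = \tau$. Any consistent type with parameters $\vec a$ in $\mc{M}_\omega$ has $\vec a$ in some $\mc{M}_i$. It is consistent there, since each formula $\exists y\,\phi(\vec a,y)$ has the same truth value in $\mc{M}_i$ and $\mc{M}_\omega$, so it is realised in $\mc{M}_{i+1}$ and hence in $\mc{M}_\omega$. Thus $\mc{M}_\omega$ is countable and saturated, and it strictly realises $\tau$.

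The main obstacle is Step~2, and specifically two points. The first is that $\omega$-stability only counts prime types, while consistency is a property of partial types, so we must extend a consistent partial type to a prime one that is still consistent in $\mc{M}_i$. The second is that the extension $\mc{M}_i \to \mc{M}_{i+1}$ must \emph{reflect} types, which positive logic does not give for free. For the extension step we would try to take a maximal filter among those $\sigma'\supseteq\sigma$ all of whose formulae have witnesses in $\mc{M}_i$ over $\vec a$, and show that it is prime using the primeness of $\tp(\vec a)$. For the reflection we encode falsity by $\bot$-sequents in the diagram theory.
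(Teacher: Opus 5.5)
Your proposal is correct and takes the same route as the paper, which only cites Hodges and sketches exactly this construction: an $\omega$-chain of countable models, each realising the consistent types over finite subsets of its predecessor, whose union is countable and $\omega$-saturated. You supply the positive-logic details that the sketch leaves implicit. These are: seeding the chain with a countable positively elementary submodel in which $\tau$ is strictly realised; using type-reflecting extensions (via the negative diagram) so that strictness survives to the union; and passing to maximal consistent, hence prime, filters so that $\omega$-stability bounds the number of types to be realised.
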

While the reader is directed to \cite{hodges} for a detailed proof in the classical setting (from which a version for coherent logic can be derived), we give some intuition about why $\omega$-stable theories have enough countable, saturated models: suppose we started with a countable model $\mc{M}_0$. %
If there are types with finitely many parameters from $\mc{M}_0$ that are not realised in $\mc{M}_0$, we find, using standard model-theoretic techniques, a larger, countable model $\mc{M}_1 \supseteq \mc{M}_0$ that does realises those types.  If $\mc{M}_1$ is not $\omega$-saturated, we repeat to find $\mc{M}_2 \supseteq \mc{M}_1$, and so on.  The bound on complexity of models imposed by $\omega$-stability ensures that after $\omega$-many steps, in the limit $\bigcup_{n < \omega} \mc{M}_n$, the process terminates and we are left with a countable, saturated model as desired.

Thus, by specialising Theorem~\ref{thm:if_enough_omegasat_then_reconstr} we arrive at our reconstruction result for $\omega$-stable theories.
\begin{corollary}\label{cor:omega-stable-reconstruction}
	Let $\theory$ and $\theory'$ be $\omega$-stable coherent theories.  The canonical extensions of their associated syntactic doctrines are isomorphic if and only if their countable, saturated models are equivalent over $\Set$.
\end{corollary}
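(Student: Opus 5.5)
The corollary follows by chaining Proposition~\ref{prop:stable_iff_countsat} with Theorem~\ref{thm:if_enough_omegasat_then_reconstr}. The only real work is to check that the hypotheses of the theorem are met for both theories.

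First, fix $\omega$-stable coherent theories $\theory$ and $\theory'$. By Proposition~\ref{prop:stable_iff_countsat}, applied to each theory separately, every prime type of $\theory$ is strictly realised in a countable, saturated model of $\theory$. The same holds for $\theory'$. This is exactly the hypothesis of Theorem~\ref{thm:if_enough_omegasat_then_reconstr}, which then gives
\[ D_\theory^\delta \cong D_{\theory'}^\delta \iff \omegaSat(\theory) \simeq_\Set \omegaSat(\theory'). \]
That equivalence is the statement of the corollary.

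I should be careful about what ``strictly realised'' means. It is the equality $\tau = \tp_{\mc M}(\vec m)$ of Remark~\ref{rem:terminology_realising_types_and_homogeneous}, not mere inclusion. This is precisely condition (iii) of Proposition~\ref{prop:sharp-mono} for the presheaf of points $\omegaSat(\theory) \to \Pt(D_\theory)$. The reason is that, by the remark following Definition~\ref{dfn:type}, the completely join-prime elements of $D_\theory^\delta X$ are the infima of prime filters of $D_\theory X$. So the output of Proposition~\ref{prop:stable_iff_countsat} is directly usable in the theorem, with no extra translation.

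The main obstacle is logical rather than formal: Proposition~\ref{prop:stable_iff_countsat} is cited from the classical source \cite{hodges}, and it must hold in the positive, coherent setting. If this needed justification, I would run the chain construction sketched after that proposition. Start from a countable model realising a given prime type $\tau$ strictly; such a model exists by completeness together with downward L\"owenheim--Skolem, since by the remark after Definition~\ref{dfn:type} $\tau$ is the type of some tuple. Then build an $\omega$-chain of countable models, each realising all prime types over finite subsets of the previous one. $\omega$-stability bounds these types by countably many, so each step stays countable. The union is countable and $\omega$-saturated in the sense of Definition~\ref{df:saturated}.

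In this construction, the tuple's type must not grow. Each extension should be an immersion (type-preserving), so that the equality $\tp(\vec m) = \tau$ persists along the chain. I would secure this by realising types in elementary-in-the-positive-sense extensions. With that granted, as the paper does, the corollary is immediate.
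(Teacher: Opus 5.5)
Your proof is correct and is exactly the paper's argument: the corollary follows by applying Proposition~\ref{prop:stable_iff_countsat} to each theory and then invoking Theorem~\ref{thm:if_enough_omegasat_then_reconstr}. Your extra remarks go beyond what the paper does, since it simply cites \cite{hodges} for the proposition. These are the identification of strict realisation with condition (iii) of Proposition~\ref{prop:sharp-mono}, and the sketch of the chain construction through type-preserving (e.g.\ elementary) extensions.
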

That is to say, for $\omega$-stable theories, the data of their (prime) types are entirely determined by the underlying sets and functions of their countable, saturated models and homomorphisms thereof.  This hints that the correct notion of \emph{Galois types} (cf., e.g.~\cite[\S 6]{grossberg}), in the case of a coherent theory, ought to involve homomorphisms of models, not only isomorphisms as in the classical case.
\begin{example}
	The assumption that $\theory$ is $\omega$-stable is not necessary in order to have enough countable, saturated models.  For instance, the theory of the Rado (or random) graph gives a counterexample.  To encode this theory in coherent logic, we take three binary relation symbols $\neq, \sim , \not \sim$, and the axioms
	\begin{align*}
		(x=y) \land (x \neq y) & \vdash \bot, & \top & \vdash (x=y) \lor (x \neq y), \\
		(x \sim y) \land (x \not \sim y) & \vdash \bot, & \top & \vdash (x \sim y) \lor (x \not \sim y),
	\end{align*}
	expressing that $=, \neq$ and $\sim, \not \sim$ are complements, $x \sim y \vdash y \sim x$ making $\sim$ symmetric, i.e., the edge relation of a graph, and for all $n, m \in \N$ the axiom
	\[
	\top \vdash \exists y. \left(\bigwedge_{i \leq n+m} y \neq x_i \land \bigwedge_{i \leq n} y \sim x_i \land \bigwedge_{n < j \leq m} y \not \sim x_j \right)
	\]
	expressing that for any two finite subsets of vertices, there is another distinct vertex that is joined to all the vertices in the first set, but none of the latter.  This theory is not $\omega$-stable (\cite[Example 4]{maryanthe_shelah}) but it is an $\omega$-categorical theory (\cite[\S 7.4]{hodges}), and so every model is $\omega$-saturated (via Example~\ref{ex:finitely_many_formulae}).
\end{example}
\section{A tale of two topoi}\label{sec:topos}
Motivating Coumans's work~\cite{coumans_phd,coumans_apal} on the canonical extension of doctrines was the connection to topos theory, which we have so far mostly omitted.  In this concluding section, we describe the topos-theoretic counterparts to the results contained in this paper, which in particular lets us compare our results in more depth to those by Makkai~\cite{makkai_topos_of_types} and Coumans.  We will therefore assume some knowledge of topos theory for this discussion, as can be found in \cite{SGL} or \cite{elephant}.

Given a coherent theory $\theory$, there are two topoi\footnote{Throughout this section, `topos' is used as shorthand for `Grothendieck topos'.} we can naturally associate with the theory $\theory$: its classifying topos and its topos of types.  We begin by briefly recalling these constructions. 

\smallskip
\textit{Classifying topoi.} \quad
The \emph{classifying topos} of a coherent theory $\theory$, which we denote by $\topos_\theory$, is characterised by the property that, for any topos $\ftopos$, geometric morphisms $\ftopos \to \topos_\theory$ correspond to models of $\theory$ internal to $\ftopos$.
Similarly, each coherent $D \colon \cat\op \to \DL$ has a `classifying topos' too. 
Denote by $\topos^\coh_D$ the topos of sheaves on the site consisting of the category $\mc{A}(D)$ equipped with the coherent topology, where $\mc{A}$ is the `syntactic category' functor from Remark~\ref{rem:justify-psh-of-points}. Then the topos $\topos^\coh_D$ \emph{classifies} $D$, in the sense that, for any topos $\ftopos$, we have an equivalence $\Geom(\ftopos,\topos^\coh_D) \simeq \CohDoc(D,\Sub_\ftopos)$.  
In particular, for $\theory$ a coherent theory, the classifying topos $\topos^\coh_{D_\theory}$ of the syntactic doctrine $D_\theory$ is the usual classifying topos of $\theory$.

We have used the superscript $\coh$ to emphasise the fact that $\topos^\coh_D$ classifies the \emph{coherent} morphisms $D \to \Sub_\ftopos$.  If the coherent doctrine $D$ takes values in the subcategory of frames $\Frm \subseteq \DL$, we may also wish to classify morphisms of coherent doctrines $(M,\alpha) \colon D \to \Sub_\ftopos$ where $\alpha \colon D \Rightarrow \Sub_\ftopos \circ M\op$ is a natural transformation of $\Frm$-valued functors. 
We call $D$ a \emph{geometric} doctrine and $(M,\alpha)$ a morphism of \emph{geometric} doctrines (\cite[Definition III.43]{wrigley_phd}).   In this case, there is a topos $\topos^\geom_D$ for which $\Geom(\ftopos,\topos^\geom_D) \simeq \mathbf{GeomDoc}(D,\Sub_\ftopos)$, which can be obtained by taking sheaves on the {geometric} category $\mc{A}(D)$ with the geometric topology.  Note that, in particular, the canonical extension $D^\delta$ of any coherent doctrine is a geometric doctrine.

\smallskip
\textit{Topoi of types.} \quad
In addition to the classifying topos, a coherent theory $\theory$ also has a corresponding \emph{topos of types} $\mc{T}_\theory$, as introduced by Makkai \cite{makkai_topos_of_types}.  
A key contribution of Coumans (\cite[Proposition 24]{coumans_apal}) was to show that %
there is an equivalence
\[\mc{T}_{\theory} \simeq \topos^\geom_{D^\delta_\theory}.\]
Generalising, we write $\mc{T}_D$ for $\topos^\geom_{D^\delta}$ and refer to it as the topos of types for a coherent doctrine $D$.  Thus, for any topos $\ftopos$, geometric morphisms $\ftopos \to \mc{T}_D$ correspond to morphisms of geometric doctrines $D^\delta \to \Sub_\ftopos$.

If $\ftopos$ is a topos such that each subobject lattice of $\ftopos$ is a $DL^+$, i.e., $\Sub_\ftopos$ is a $DL^+$-doctrine, then we can restrict $\Geom(\ftopos,\mc{T}_D) \simeq \Geom(D^\delta,\Sub_\ftopos)$ to an equivalence 
\[\Geom^+(\ftopos,\mc{T}_D) \simeq \CohDocplus(D^\delta,\Sub_\ftopos),\]
where $\Geom^+(\ftopos,\mc{T}_D)$ is defined as the class of geometric morphisms $f \colon \ftopos \to \mc{T}_D$ such that the inverse image $f^\ast$ preserves arbitrary meets of subobjects.  This is the original universal property of $\mc{T}_D$ that Makkai used to characterise the topos of types (\cite[Theorem 1.1]{makkai_topos_of_types}).  Thus, our Theorem~\ref{thm:saturated} is equivalent to the following corollary:
\begin{corollary}
	For $\theory$ a coherent theory, $\Geom^+(\Set,\mc{T}_\theory)$ is equivalent to the category of (set-based) $\omega$-saturated models of $\theory$.
\end{corollary}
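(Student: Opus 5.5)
The plan is to chain three equivalences already available in the paper. First, by the universal property of the topos of types recalled just above, and since $\Sub_\Set = \Powplus$ is a $DL^+$-doctrine, we have
\[ \Geom^+(\Set,\mc{T}_\theory) \simeq \CohDocplus(D_\theory^\delta, \Powplus) = \Ptc(D_\theory^\delta) \ . \]
Second, the complete points of $D_\theory^\delta$ correspond, by Theorem~\ref{thm:extensible} applied with $Q := \Powplus$, to the extensible coherent points of $D_\theory$. Concretely, restriction along $\eta^{D_\theory}$ sends a complete point $(M,\beta)$ to $(M,\beta\circ\eta)$, and $(M,\alpha)\mapsto(M,\alpha^\sharp)$ is an inverse on objects by the uniqueness in Proposition~\ref{prop:sharp-natural}. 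Third, by Remark~\ref{rem:points-as-models} and Theorem~\ref{thm:saturated}, the extensible points of $D_\theory$ are exactly the points associated to $\omega$-saturated models of $\theory$.

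Since the statement asks for an equivalence of categories, I would check at each stage that morphisms also match. In the first step this is part of the cited universal property: natural transformations between geometric morphisms correspond to transformations of doctrine morphisms. For the second step, I need that a natural transformation $\sigma \colon M \Rightarrow N$ is a transformation $(M,\alpha)\Rightarrow(N,\gamma)$ of points of $D_\theory$ if and only if it is a transformation $(M,\alpha^\sharp)\Rightarrow(N,\gamma^\sharp)$ of complete points. The backward direction is immediate by restricting to $D_\theory c \subseteq D_\theory^\delta c$. For the forward direction, the inequality $\alpha^\sharp_c(u) \leq \Powplus(\sigma_c)\gamma^\sharp_c(u)$ holds for $u \in D_\theory c$ by assumption, and both sides are complete morphisms in $u$. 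The inequality therefore extends first to filter elements, since both sides preserve meets, and then to arbitrary elements of $D_\theory^\delta c$, since both sides preserve joins and the filter elements join-generate by density.

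For the third step, the isomorphism between models of $\theory$ and points of $D_\theory$ in Remark~\ref{rem:points-as-models} is already an isomorphism of categories. Restricting it to the full subcategories on $\omega$-saturated models and on extensible points gives the required equivalence, using Theorem~\ref{thm:saturated} for the objects. Here I take the morphisms of $\omega$-saturated models to be all model homomorphisms, matching the full subcategory of points.

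I expect the main obstacle to be the bookkeeping in the first step rather than any new mathematics. One must confirm that the restricted universal property $\Geom^+(\Set,\mc{T}_D)\simeq\CohDocplus(D^\delta,\Sub_\Set)$ holds at the level of 2-cells, and that $\Sub_\Set$ agrees with $\Powplus$ up to the isomorphism noted in Example~\ref{exa:subdoc}. I would treat this as given by the cited results of Makkai and Coumans. If needed, I would argue it directly: $\Geom^+$ is the full subcategory of $\Geom$ cut out by the meet-preservation condition, and this condition corresponds precisely to completeness of the components of the associated doctrine morphism.
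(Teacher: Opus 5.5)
Your proposal is correct and follows the same route as the paper. The paper obtains the corollary directly by combining the restricted universal property $\Geom^+(\Set,\mc{T}_\theory) \simeq \CohDocplus(D_\theory^\delta,\Sub_\Set)$ with Theorem~\ref{thm:saturated}, itself resting on Theorem~\ref{thm:extensible}. Your additional check that transformations extend to the liftings (closing the inequality first under meets, then under joins, using density) is a valid detail the paper leaves implicit.
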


Note that in general $\topos^\geom_{D^\delta} \not \simeq \topos^\coh_{D^\delta}$, and also $\Geom^+(\ftopos,\mc{T}_D)$ is strictly contained in $\Geom(\ftopos,\mc{T}_D)$ -- we must specify with respect to which syntax we are classifying $D^\delta$!

\smallskip
\textit{Internal locales.} \quad 
Here is another perspective on the topos of types:  the geometric doctrine $D^\delta \colon \mc{C}\op \to \DLplus$ describes an \emph{internal frame}, or equivalently an \emph{internal locale}, of the presheaf topos $\Set^{\mc{C}\op}$ (see \cite[\S VI.2]{JT}).  As an internal locale, we can take the topos $\Sh_{\Set^{\mc{C}\op}}(D^\delta)$ of \emph{internal sheaves} on $D^\delta$, generalising the usual notion of sheaves on a locale.  The topos $\Sh_{\Set^{\mc{C}\op}}(D^\delta)$ is equivalent to $\mc{T}_D$ (compare the constructions of $\Sh_{\Set^{\mc{C}\op}}(D)$ in \cite[Example~1.3(c)]{moerdijk_contfibr} and $\topos^\geom_{D^\delta}$ in \cite[\S III]{wrigley_phd}).
As a consequence, there is a canonical \emph{localic} geometric morphism $v \colon \mc{T}_D \simeq \Sh_{\Set^{\mc{C}\op}}(D^\delta) \to \Set^{\mc{C}\op}$ (see \cite[Lemma 1.2]{joh_fact}).

In fact, for any coherent doctrine $D \colon \cat\op \to \DL$, there is a corresponding canonical localic geometric morphism $u_D \colon \topos_D^\coh \to \Set^{\mc{C}\op}$ (\cite[\S IV.1]{wrigley_phd}).

\smallskip
\textit{Hyperconnected-localic factorisations.} \quad
Every geometric morphism $f \colon \ftopos \to \mc{G}$ between toposes can be factorised uniquely up to equivalence as
\[
\ftopos \xrightarrow{f_\hyp} \locfact(f) \xrightarrow{f_\loc} \mc{G},
\]
where $f_\hyp$ is \emph{hyperconnected} and $f_\loc$ is \emph{localic} (and thus the factoring topos $\locfact(f)$ is the topos of sheaves on an internal locale of $\mc{G}$; see \cite[\S A.4.6.5]{elephant}). 

Let $D \colon \cat\op \to \DL$ be a coherent doctrine, $\Kcat$ a small category, and let $p \colon \Set^\Kcat \to \topos^\coh_D$ be a geometric morphism. By the characteristic property of $\topos^\coh_D$, $p$ is uniquely induced by some morphism of coherent doctrines $(M,\alpha) \colon D \to \Sub_{\Set^\Kcat}$. 
Consider the hyperconnected-localic factorisation $(p_\hyp,p_\loc)$.  Since the geometric morphism $u_D \colon \topos^\coh_D \to \Set^{\mc{C}\op}$ is localic, the factorisation of the composite $u_D \circ p$ is given by $((u_D \circ p)_\hyp, (u_D \circ p)_\loc) = (p_\hyp, u_D \circ p_\loc)$, as in the following diagram:
\[\begin{tikzcd}
	\Set^\Kcat \ar{rr}{p} \ar{dr}[']{p_\hyp} &                             & \topos^\coh_D \ar{r}{u_D} & \Set^{\cat\op}\ .\\ 
	                                     & \locfact(p) \ar{ur}[']{p_\loc} &                           &
\end{tikzcd}\]

The internal locale of $\Set^{\mc{C}\op}$ corresponding to the localic morphism $u_D \circ p_\loc$ is easy to calculate: it is given by the (geometric) doctrine $\Sub_\Kcat \circ M\op \colon \mc{C}\op \to \Frm$ (see, for instance, \cite[\S 3]{wr_intloc}).  Thus, since there are equivalences $\mc{T}_D \simeq \Sh_{\Set^{\mc{C}\op}}(D^\delta )$ and $\locfact(p) \simeq \Sh_{\Set^{\mc{C}\op}}(\Sub_\Kcat \circ M\op)$, Theorem~\ref{thm:iso_iff_types_and_homo} can also be expressed as follows:
\begin{corollary}
The morphism $(M,\alpha) \colon D \to \Sub_{\Set^\Kcat}$ is plentiful if and only if there is an equivalence of topoi $\mc{T}_D \simeq \locfact(p)$.
\end{corollary}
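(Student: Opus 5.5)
The plan is to translate both sides of the claimed equivalence into statements about internal locales of $\Set^{\cat\op}$, and then reduce to Theorem~\ref{thm:iso_iff_types_and_homo}. The key inputs, recalled just above, are the two equivalences
\[
\mc{T}_D \simeq \Sh_{\Set^{\cat\op}}(D^\delta)
\quad\text{and}\quad
\locfact(p) \simeq \Sh_{\Set^{\cat\op}}(\Sub_{\Set^\Kcat} \circ M\op).
\]
Both are taken over $\Set^{\cat\op}$, via the localic morphisms $v$ and $u_D \circ p_\loc$ respectively. We also use the standard fact \cite[\S A.4.6]{elephant} that the functor sending an internal locale $L$ of $\Set^{\cat\op}$ to $\Sh_{\Set^{\cat\op}}(L) \to \Set^{\cat\op}$ is an equivalence between internal locales and localic toposes over $\Set^{\cat\op}$. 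Under this equivalence, an isomorphism of internal frames $D^\delta \cong \Sub_{\Set^\Kcat}\circ M\op$ is the same thing as an equivalence $\mc{T}_D \simeq \locfact(p)$ over $\Set^{\cat\op}$. Here internal frames in $\Set^{\cat\op}$ are $\Frm$-valued functors on $\cat\op$ together with the quantifier structure, i.e.\ geometric doctrines.

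For the forward direction, suppose $(M,\alpha)$ is plentiful. Theorem~\ref{thm:iso_iff_types_and_homo} then gives that $\alpha^\sharp \colon D^\delta \Rightarrow \Sub_{\Set^\Kcat}\circ M\op$ is a natural isomorphism. By Remark~\ref{rem:nat-iso-is-doc-morph}, it automatically commutes with the existential quantifiers, so it is an isomorphism of geometric doctrines, and hence of internal locales. The equivalence $\mc{T}_D \simeq \locfact(p)$ follows. Note that we need the iso to live over the \emph{same} base functor; we identify $D^\delta$ and $\Sub_{\Set^\Kcat}\circ M\op$ as internal frames of $\Set^{\cat\op}$ itself. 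The second one arises by pulling back along the representable-like structure, as in \cite[\S 3]{wr_intloc}.

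For the converse, the statement must be read as an equivalence compatible with the canonical data: it should lie over $\Set^{\cat\op}$, and ideally also commute with the comparison $\mc{T}_D \to$ (or from) $\locfact(p)$ induced by $\alpha$. Concretely, $p_\hyp$ together with the universal property of $\mc{T}_D$ yields a geometric morphism $\locfact(p) \to \mc{T}_D$ over $\Set^{\cat\op}$, corresponding to the internal frame map $\alpha^\sharp$. This uses that $u_D \circ p_\loc$ is localic with frame $\Sub_{\Set^\Kcat}\circ M\op$, and that $\alpha^\sharp$ is a frame map by Proposition~\ref{prop:sharp-natural}. If this canonical morphism is an equivalence, then $\alpha^\sharp$ is an isomorphism of internal frames, and Theorem~\ref{thm:iso_iff_types_and_homo} gives plentifulness.

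The main obstacle is precisely this converse. An \emph{abstract} equivalence $\mc{T}_D \simeq \locfact(p)$ not commuting with the maps to $\Set^{\cat\op}$ would not obviously force $\alpha^\sharp$ to be invertible. I would therefore first make precise that the corollary concerns the canonical comparison morphism over $\Set^{\cat\op}$. The remaining work is then to check that the comparison morphism really corresponds to $\alpha^\sharp$ under the locale–localic-topos equivalence. I would verify this by computing its inverse image on subterminals of the representables $y(c)$, where it should send $\phi \in D^\delta c$ to $\alpha^\sharp_c(\phi)$.
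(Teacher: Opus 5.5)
Your strategy is the paper's own. The paper transports Theorem~\ref{thm:iso_iff_types_and_homo} along $\mc{T}_D \simeq \Sh_{\Set^{\cat\op}}(D^\delta)$ and $\locfact(p) \simeq \Sh_{\Set^{\cat\op}}(\Sub_{\Set^\Kcat}\circ M\op)$, and your forward direction is exactly that argument made explicit. You are also right that the converse needs a canonical equivalence rather than an abstract one, which the paper leaves implicit.

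\textbf{The gap in the converse.} You assert that $\alpha^\sharp$ is a map of internal frames ``by Proposition~\ref{prop:sharp-natural}'', and hence always induces a comparison $\locfact(p)\to\mc{T}_D$ over $\Set^{\cat\op}$. That step is false. Proposition~\ref{prop:sharp-natural} only gives a natural transformation whose components are complete morphisms.
\begin{itemize}
\item An internal frame homomorphism $\phi\colon A\to B$ in $\Set^{\cat\op}$ must have an internal right adjoint, so its componentwise right adjoints must be natural.
\item Passing to left adjoints in those naturality squares, this says exactly $\phi_{c'}\circ\exists^A_f=\exists^B_f\circ\phi_c$ for every $f\colon c\to c'$.
\item For $\phi=\alpha^\sharp$ this is the Beck--Chevalley condition of Theorem~\ref{thm:extensible}, i.e.\ extensibility of $(M,\alpha)$.
\item Extensibility fails in general: take $\Kcat=1$ and a model that is not $\omega$-saturated (Theorem~\ref{thm:saturated}), or see Example~\ref{exa:not-universal}.
\end{itemize}
You used this fact correctly in the forward direction via Remark~\ref{rem:nat-iso-is-doc-morph}, but it is automatic only when $\alpha^\sharp$ is invertible. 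So your comparison morphism exists only for extensible $(M,\alpha)$. Your argument does not cover the other cases.

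\textbf{Two ways to repair it.}
\begin{itemize}
\item Reformulate the right-hand side as ``$(M,\alpha)$ is extensible and the induced comparison is an equivalence''. This is harmless, because plentiful implies extensible: a componentwise invertible $\alpha^\sharp$ commutes with $\exists$.
\item More cleanly, and without presupposing the comparison, require the equivalence to lie over $\topos^\coh_D$. That is, it should be compatible with $p_\loc$ and with the morphism $\mc{T}_D\to\topos^\coh_D$ induced by the generic model composed with $\eta^D$. Such an equivalence corresponds to an isomorphism of internal frames $\theta\colon D^\delta\cong\Sub_{\Set^\Kcat}\circ M\op$ with $\theta_c(a)=\alpha_c(a)$ for all $a\in Dc$. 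Each $\theta_c$ is an order isomorphism of complete lattices, hence a complete morphism extending $\alpha_c$. By the uniqueness in (\ref{eq:sharp-def}), $\theta=\alpha^\sharp$, and Theorem~\ref{thm:iso_iff_types_and_homo} then gives plentifulness.
\end{itemize}
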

\noindent
The left-to-right direction of this corollary is how Coumans originally expressed \cite[Theorem 40]{coumans_apal}.

\smallskip
\textit{Comparing internal locales.} \quad
We end by describing the topos-theoretic intuition behind the constructions in Section~\ref{sec:reconstr}.

Let $D, D' \colon \mc{C}\op \to \DL$ be a pair of coherent doctrines over the same base category, and let $\Kcat \subseteq \Pt(D)$ and $\Kcat' \subseteq \Pt(D')$ be subcategories of points, which we recall from Remark~\ref{rem:justify-psh-of-points} can be alternatively viewed as a pair $(M,\alpha) \colon D \to \Sub_{\Set^\Kcat}$ and $(M',\alpha') \colon D' \to \Sub_{\Set^{\Kcat'}}$ of presheaf-valued points, or, as above, as a pair of geometric morphisms $p \colon \Set^\Kcat \to \topos^\coh_D$ and $p' \colon \Set^{\Kcat'} \to \topos^\coh_{D'}$.  Now suppose there is an equivalence $\Kcat \simeq_{\bf{Lex}[\mc{C},\Set]} \Kcat'$ over ${\bf{Lex}[\mc{C},\Set]}$, where the functor $\Kcat \to \bf{Lex}[\mc{C},\Set]$ is the restriction of the canonical forgetful functor $\Pt(D) \to \bf{Lex}[\mc{C},\Set]$, and similarly for $\Kcat'$. It follows that there is an equivalence of topoi $\Set^\Kcat \simeq_{\Set^{\mc{C}\op}} \Set^{\Kcat'}$ over $\Set^{\mc{C}\op}$, i.e., there is an equivalence $\Set^\Kcat \simeq \Set^{\Kcat'}$ that commutes with the geometric morphisms $u_D \circ p$ and $u_{D'} \circ p'$, as in the diagram
\[\begin{tikzcd}[column sep=tiny]
	 \Set^\Kcat \ar{d}[']{p} &[-20pt] \simeq &[-20pt] \Set^{\Kcat'} \ar{d}{p'}\\
	\topos^\coh_D \ar[bend right = 1em]{dr}[']{u_D} & &\topos^\coh_{D'} \ar[bend left = 1em]{dl}{u_{D'}} \\
	& \Set^{\mc{C}\op} .
\end{tikzcd}\]
Hence, by the above analysis, if there is an equivalence $\Kcat \simeq_{\bf{Lex}[\mc{C},\Set]} \Kcat'$, then there is a chain of equivalences of topoi $\locfact(p) \simeq \locfact(u_D \circ p) \simeq  \locfact(u_{D'} \circ p') \simeq \locfact(p')$.  Thus, we obtain the following corollary:
\begin{corollary}
	Suppose that $\Kcat$ and $\Kcat'$ are plentiful for $\theory$, respectively, $\theory'$, and that $\Kcat \simeq_{\bf{Lex}[\mc{C},\Set]} \Kcat'$. Then $\mc{T}_D \simeq_{\Set^{\mc{C}\op}} \mc{T}_{D'}$. 
\end{corollary}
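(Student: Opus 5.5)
The plan is to reduce the statement to Theorem~\ref{thm:iso_iff_types_and_homo} and then transport along the given equivalence. Write $(M,\alpha) \colon D \to \Sub_{\Set^\Kcat}$ and $(M',\alpha') \colon D' \to \Sub_{\Set^{\Kcat'}}$ for the presheaf-valued points corresponding to the inclusions $\Kcat \subseteq \Pt(D)$ and $\Kcat' \subseteq \Pt(D')$, as in Construction~\ref{dfn:psh-of-pts}. Since both are plentiful, Theorem~\ref{thm:iso_iff_types_and_homo} gives natural isomorphisms $\alpha^\sharp \colon D^\delta \cong \Sub_{\Set^\Kcat}\circ M\op$ and $\alpha'^\sharp \colon D'^\delta \cong \Sub_{\Set^{\Kcat'}}\circ M'\op$. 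Both are natural isomorphisms of $DL^+$-valued functors on $\mc{C}\op$, and hence, by Remark~\ref{rem:nat-iso-is-doc-morph}, isomorphisms of doctrines over the identity on $\mc{C}$.

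The middle step is to build a natural isomorphism $\Sub_{\Set^\Kcat}\circ M\op \cong \Sub_{\Set^{\Kcat'}}\circ M'\op$ from the equivalence $F \colon \Kcat \leftrightarrows \Kcat' \colon G$ over $\mathbf{Lex}[\mc{C},\Set]$. The condition ``over'' means that for each $k$ the underlying lex functor of $Fk$ is exactly $M_k$, and the same holds on morphisms. Consequently, for each $c$ the presheaf $Mc \colon \Kcat \to \Set$ is equal to $(M'c)\circ F$, and symmetrically $M'c = (Mc)\circ G$ up to the unit and counit. Precomposition with an equivalence, $F^* \colon \Set^{\Kcat'} \to \Set^\Kcat$, is itself an equivalence of categories and therefore induces isomorphisms of subobject lattices $\Sub(M'c) \cong \Sub(F^*M'c) = \Sub(Mc)$. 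These isomorphisms are natural in $c$, because pullback commutes with $F^*$. Composing gives a natural isomorphism $\gamma \colon D^\delta \cong D'^\delta$ of $DL^+$-doctrines on $\mc{C}$.

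The last step is to pass from $\gamma$ to the topoi. Using $\mc{T}_D \simeq \Sh_{\Set^{\mc{C}\op}}(D^\delta)$, an isomorphism of internal frames in $\Set^{\mc{C}\op}$ yields an equivalence of the internal sheaf topoi that commutes with the localic morphisms to $\Set^{\mc{C}\op}$. This is exactly $\mc{T}_D \simeq_{\Set^{\mc{C}\op}} \mc{T}_{D'}$. Alternatively, one can follow the chain $\mc{T}_D \simeq \locfact(p) \simeq \locfact(p') \simeq \mc{T}_{D'}$ sketched before the statement, using the preceding corollary together with uniqueness of the hyperconnected--localic factorisation of $u_D\circ p \simeq u_{D'}\circ p'$.

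The main obstacle is checking that the equivalences produced are genuinely \emph{over} $\Set^{\mc{C}\op}$. On the doctrine side this means confirming that $\gamma$ is the identity on the base $\mc{C}$; on the topos side it means that the equivalence $\Set^\Kcat\simeq\Set^{\Kcat'}$ commutes with $u_D\circ p$ and $u_{D'}\circ p'$, which requires the inverse image $(u_D p)^*$ to be computed as $c\mapsto Mc$ on representables. I would handle this on the doctrine side, since there naturality is only a pullback computation, and then invoke that $\Sh_{\Set^{\mc{C}\op}}(-)$ is functorial on isomorphisms of internal locales.
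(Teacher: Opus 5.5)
Your proof is correct, and your main argument takes a different route from the paper's. The ``alternative'' you sketch at the end of your third paragraph is in fact the paper's argument.

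The paper transports along the given equivalence on the topos side. From $\Kcat\simeq_{\mathbf{Lex}[\mc{C},\Set]}\Kcat'$ it obtains an equivalence $\Set^\Kcat\simeq\Set^{\Kcat'}$ commuting with $u_D\circ p$ and $u_{D'}\circ p'$. Uniqueness of hyperconnected--localic factorisations then gives $\locfact(p)\simeq\locfact(u_D\circ p)\simeq\locfact(u_{D'}\circ p')\simeq\locfact(p')$. Plentifulness is invoked only at the end, via the preceding corollary $\mc{T}_D\simeq\locfact(p)$.

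You instead transport on the doctrine side. Theorem~\ref{thm:iso_iff_types_and_homo}, together with the isomorphisms $\Sub_{\Set^{\Kcat'}}(M'c)\cong\Sub_{\Set^\Kcat}(Mc)$ induced by $F^*$ (natural in $c$ because $F^*$ preserves pullbacks), gives an isomorphism $D^\delta\cong(D')^\delta$ of doctrines over the identity of $\mc{C}$. This is the same chain as in the proof of Theorem~\ref{thm:if_enough_omegasat_then_reconstr}, and you then apply $\Sh_{\Set^{\mc{C}\op}}(-)$ only once.

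What your route buys:
\begin{itemize}
\item A stronger intermediate statement, namely $D^\delta\cong(D')^\delta$ on the common base.
\item Less topos theory. You need neither the factorisation, nor the identification of the internal locale of $\locfact(p)$ with $\Sub_{\Set^\Kcat}\circ M\op$, nor the check that the inverse image of $u_D\circ p$ sends representables to $Mc$.
\item Your only external inputs are $\mc{T}_D\simeq\Sh_{\Set^{\mc{C}\op}}(D^\delta)$ over $\Set^{\mc{C}\op}$, and the immediate fact that a natural isomorphism of frame-valued functors is an isomorphism of internal frames.
\end{itemize}

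What the paper's route buys: it makes visible why only the underlying sets matter. $\locfact(p)$ depends only on the composite $u_D\circ p$, which is determined by $M$, so $\locfact(p)\simeq\locfact(p')$ holds before plentifulness is used at all. Your middle step is exactly the internal-locale counterpart of this observation.
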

\noindent
\begin{acknowledgement}		
	We thank Mark Kamsma for patiently answering our questions on model theory, no matter how basic.  Both authors acknowledge the financial support from Agence Nationale de la Recherche project \emph{Topology for types and terms} ANR-23-CE48-0012-01.  The second author also acknowledges the support from Marie Sk{\l}odowska-Curie Grant No.\ 101273434.

	\smallskip
	\rightline{\includegraphics[height=1.0cm]{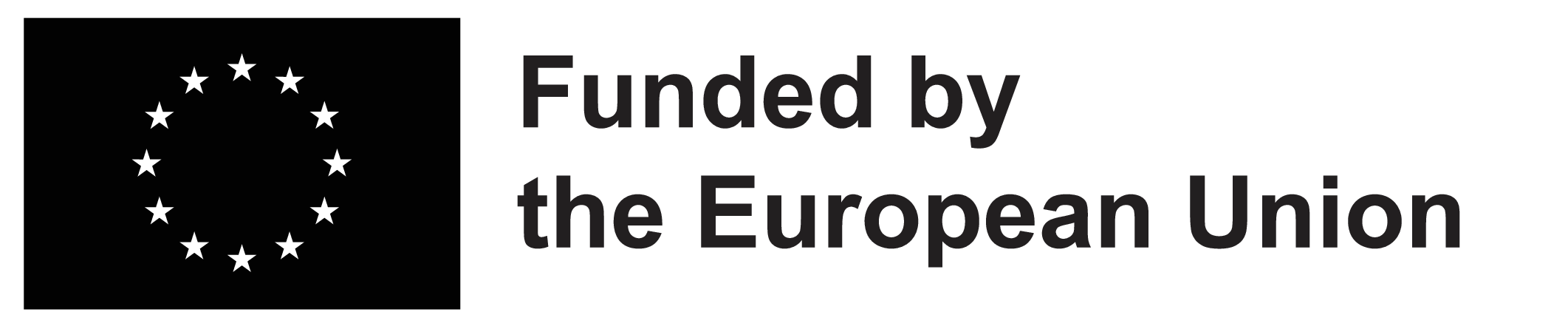}}
\end{acknowledgement}
\bibliographystyle{spmpsci}
\bibliography{biblio.bib}

\begin{thebibliography}{10}
\providecommand{\url}[1]{{#1}}
\providecommand{\urlprefix}{URL }
\expandafter\ifx\csname urlstyle\endcsname\relax
  \providecommand{\doi}[1]{DOI~\discretionary{}{}{}#1}\else
  \providecommand{\doi}{DOI~\discretionary{}{}{}\begingroup \urlstyle{rm}\Url}\fi

\bibitem{ahlbrandtziegler}
Ahlbrandt, G., Ziegler, M.: Quasi finitely axiomatizable totally categorical theories.
\newblock Annals of Pure and Applied Logic \textbf{30}(1), 63--82 (1986)

\bibitem{butz}
Butz, C.: Saturated models of intuitionistic theories.
\newblock Annals of Pure and Applied Logic \textbf{129}(1), 245--275 (2004)

\bibitem{caramello_galois_theory}
Caramello, O.: Topological galois theory.
\newblock Advances in Mathematics \textbf{291}, 646--695 (2016)

\bibitem{coumans_phd}
Coumans, D.: Canonical extensions in logic.
\newblock Ph.D. thesis, Radboud University Nijmegen (2012).
\newblock \urlprefix\url{http://hdl.handle.net/2066/98663}

\bibitem{coumans_apal}
Coumans, D.: Generalising canonical extension to the categorical setting.
\newblock Annals of Pure and Applied Logic \textbf{163}(12), 1940--1961 (2012)

\bibitem{DGP2005}
Dunn, J.M., Gehrke, M., Palmigiano, A.: Canonical extensions and relational completeness of some substructural logics.
\newblock Journal of Symbolic Logic \textbf{70}(3), 713--740 (2005)

\bibitem{FusGeh2025}
Fussner, W., Gehrke, M.: {Canonical Extensions Quickly} (2025).
\newblock \urlprefix\url{https://hal.science/hal-05231128}.
\newblock Preprint

\bibitem{garner}
Garner, R.: Ultrafilters, finite coproducts and locally connected classifying toposes.
\newblock Annals of Pure and Applied Logic \textbf{171}(10), 102831 (2020)

\bibitem{GehHar2001}
Gehrke, M., Harding, J.: Bounded lattice expansions.
\newblock Journal of Algebra \textbf{238}(1), 345--371 (2001)

\bibitem{GehJon1994}
Gehrke, M., Jónsson, B.: Bounded distributive lattice expansions.
\newblock Mathematica Japonica \textbf{40}, 207--15 (1994)

\bibitem{GehJon2004}
Gehrke, M., Jónsson, B.: Bounded distributive lattice expansions.
\newblock Math. Scand. \textbf{94}, 13--45 (2004)

\bibitem{GehPri2007}
Gehrke, M., Priestley, H.: Canonical extensions of double quasioperator algebras: An algebraic perspective on duality for certain algebras with binary operations.
\newblock Journal of Pure and Applied Algebra \textbf{209}(1), 269--290 (2007)

\bibitem{GehPri2008}
Gehrke, M., Priestley, H.A.: Canonical extensions and completions of posets and lattices.
\newblock Reports Math. Log. \textbf{43}, 133--152 (2008)

\bibitem{GehVos2011}
Gehrke, M., Vosmaer, J.: Canonical extensions and canonicity via dcpo presentations.
\newblock Theoretical Computer Science \textbf{412}(25), 2714--2723 (2011)

\bibitem{GooMar2024}
Gool, S.v., Marquès, J.: On duality and model theory for polyadic spaces.
\newblock Annals of Pure and Applied Logic \textbf{175}(2), 103388 (2024)

\bibitem{grossberg}
Grossberg, R.: Classification theory for abstract elementary classes.
\newblock In: Y.~Zhang (ed.) Logic and Algebra, \emph{Contemporary Mathematics}, vol. 302, pp. 165--204. American Mathematical Society (2003)

\bibitem{haykazyan_types}
Haykazyan, L.: Spaces of types in positive model theory.
\newblock The Journal of Symbolic Logic \textbf{84}, 833--848 (2019)

\bibitem{hodges}
Hodges, W.: Model theory.
\newblock Cambridge University Press (1993)

\bibitem{joh_fact}
Johnstone, P.T.: Factorization theorems for geometric morphisms, {I}.
\newblock Cahiers de topologie et g\'eom\'etrie diff\'erentielle \textbf{22}(1), 3--17 (1981)

\bibitem{elephant}
Johnstone, P.T.: Sketches of an Elephant: A Topos Theory Compendium.
\newblock Oxford Logic Guides 43 \& 44. Oxford University Press (2002)

\bibitem{JT}
Joyal, A., Tierney, M.: An extension of the {G}alois theory of {G}rothendieck.
\newblock Memoirs of the American Mathematical Society \textbf{51}(309) (1984)

\bibitem{JonTar1951}
Jónsson, B., Tarski, A.: {Boolean algebras with operators. Part I}.
\newblock American Journal of Mathematics \textbf{73}(4), 891--939 (1951)

\bibitem{JonTar1952}
Jónsson, B., Tarski, A.: {Boolean algebras with operators. Part II}.
\newblock American Journal of Mathematics \textbf{74}(1), 127--162 (1952)

\bibitem{kamsma}
Kamsma, M.: Positive logic: An introduction for model theorists (2025).
\newblock \urlprefix\url{https://arxiv.org/abs/2511.10167}

\bibitem{lawvere}
Lawvere, F.W.: Adjointness in foundations.
\newblock Dialectica \textbf{23}(3/4), 281--296 (1969)

\bibitem{LorRie2020}
Loregian, F., Riehl, E.: Categorical notions of fibration.
\newblock Expositiones Mathematicae \textbf{38}(4), 496--514 (2020)

\bibitem{SGL}
{Mac Lane}, S., Moerdijk, I.: Sheaves in geometry and logic: a first introduction to topos theory.
\newblock Springer (1994)

\bibitem{makkai_topos_of_types}
Makkai, M.: The topos of types.
\newblock In: M.~Lerman, J.H. Schmerl, R.I. Soare (eds.) Logic Year 1979--80, pp. 157--201. Springer Berlin Heidelberg, Berlin, Heidelberg (1981)

\bibitem{makkai-reyes}
Makkai, M., Reyes, G.: First Order Categorical Logic: Model-Theoretical Methods in the Theory of Topoi and Related Categories.
\newblock Lecture Notes in Mathematics. Springer (1977)

\bibitem{maryanthe_shelah}
Malliaris, M., Shelah, S.: General topology meets model theory, on $\mathfrak{p}$ and $\mathfrak{t}$.
\newblock Proceedings of the National Academy of Sciences \textbf{110}(33), 13300--13305 (2013)

\bibitem{moerdijk_contfibr}
Moerdijk, I.: Continuous fibrations and inverse limits of toposes.
\newblock Compositio Mathematica \textbf{58}(1), 45--72 (1986)

\bibitem{MorAlt2018}
Morton, W., van Alten, C.J.: Distributive and completely distributive lattice extensions of ordered sets.
\newblock International Journal of Algebra and Computation \textbf{28}(3), 521--541 (2018)

\bibitem{Pit1983}
Pitts, A.: An application of open maps to categorical logic.
\newblock Journal of Pure and Applied Algebra \textbf{29}(3), 313--326 (1983)

\bibitem{Pri1970}
Priestley, H.A.: {Representation of distributive lattices by means of ordered Stone spaces}.
\newblock {Bull. London Math. Soc.} \textbf{2}, 186--190 (1970)

\bibitem{rogers_endomorphism_monoids}
Rogers, M.: Topological endomorphism monoids of models of geometric theories.
\newblock Theory and Applications of Categories \textbf{42}, 41--58 (2024)

\bibitem{seely}
Seely, R.A.G.: Hyperdoctrines, natural deduction and the {B}eck condition.
\newblock Mathematical Logic Quarterly \textbf{29}(7), 97--173 (1983)

\bibitem{Sto1938}
Stone, M.H.: {Topological representations of distributive lattices and Brouwerian logics}.
\newblock \v{C}asopis pro Pe\v{s}tov\'an\'i Matematiky a Fysiky \textbf{67}, 1--25 (1938)

\bibitem{wrigley_phd}
Wrigley, J.L.: Doctrinal and groupoidal representations of classifying topoi.
\newblock Ph.D. thesis, Università degli Studi dell'Insubria (2024).
\newblock \urlprefix\url{https://jlwrigley.github.io/thesis/thesis_wrigley.pdf}

\bibitem{wr_intloc}
Wrigley, J.L.: Some properties of internal locale morphisms externalised.
\newblock Theory and Applications of Categories \textbf{41}(35), 1160--1202 (2024)

\bibitem{Wri2026}
Wrigley, J.L.: Existential completions and {H}erbrand's theorem.
\newblock Theory and Applications of Categories \textbf{45}(23), 924--950 (2026)

\end{thebibliography}

\end{document}